\documentclass[reqno]{amsart}
\usepackage{amsmath, amsthm, amssymb, amstext}

\usepackage{hyperref,xcolor}
\hypersetup{pdfborder={0 0 0},colorlinks}
\usepackage{enumitem}
\allowdisplaybreaks
\usepackage{graphicx}
\usepackage{caption}
\usepackage{mathtools}
\usepackage{tikz}
\usepackage{esint}
\usepackage{cancel}
\usepackage{lineno}
\newtheorem{theorem}{Theorem}
\newtheorem{remark}[theorem]{Remark}
\newtheorem{lemma}[theorem]{Lemma}
\newtheorem{proposition}[theorem]{Proposition}

\newtheorem{definition}[theorem]{Definition}

\DeclareMathOperator*{\ess}{\text{ess}}
\let\sp\undefined
\DeclareMathOperator*{\sp}{\text{span}}

\newcommand{\N}{\mathbb{N}}
\newcommand{\R}{\mathbb{R}}
\newcommand{\h}{\mathbb{H}}

\newcommand{\eps}{\varepsilon}
\newcommand{\ph}{\varphi}
\newcommand{\into}{\int_{\Omega}}

\newcommand{\intr}{\iint_{\R^N\times\R^N}} 
\newcommand{\inta}{\iint\limits_{\substack{x,y\in\R^N \\ \abs{x-y}\leq1}}}
\newcommand{\intb}{\iint\limits_{\substack{x,y\in\R^N \\ \abs{x-y} \geq 1}}}

\renewcommand{\l}{\left}
\renewcommand{\r}{\right}

\def\abs#1{\left|{#1}\right|}

\numberwithin{theorem}{section}
\numberwithin{equation}{section}

\renewcommand{\thetheorem}{\arabic{section}.\arabic{theorem}}

\title[Weighted eigenvalue problem]{Spectral Properties of the Logarithmic Laplacian with Indefinite Weights}

\author[R. Arora]{Rakesh Arora} 
\address[R. Arora]{Department of Mathematical Sciences, Indian Institute of Technology Varanasi (IIT-BHU), Uttar Pradesh 221005, India}
\email{rakesh.mat@iitbhu.ac.in}

\author[T. Mukherjee]{Tuhina Mukherjee}
\address[T. Mukherjee]{Department of Mathematics, Indian Institute of Technology Jodhpur, Rajasthan 342030, India}
\email{tuhina@iitj.ac.in}

\author[A. Vaishnavi]{Arshi Vaishnavi} 
\address[A. Vaishnavi]{Department of Mathematical Sciences, Indian Institute of Technology Varanasi (IIT-BHU), Uttar Pradesh 221005, India}
\email{arshiv1998@gmail.com}

\begin{document}
\begin{abstract}
In this paper, we investigate a weighted eigenvalue problem driven by the Logarithmic Laplacian with indefinite weights. We prove the existence of an unbounded sequence of Lusternik-Schnirelman eigenvalues and show that the first eigenvalue is simple, with the associated eigenfunction having constant sign in the domain. In contrast, eigenfunctions corresponding to higher eigenvalues necessarily change sign. We further establish a nodal domain type inequality relating the higher eigenvalues to the measure of the positive and negative parts of the corresponding eigenfunctions, which is of independent interest. As an application, we prove that the first eigenvalue is isolated. In addition, we obtain alternative variational characterizations of the first and second eigenvalues and establish monotonicity properties of the eigenvalues with respect to both the weight function and the domain.
\end{abstract}
\maketitle
\noindent \textbf{Mathematics Subject Classification:} 35P05, 35S15, 35A15, 35R09.

\vspace{0.5em}

\noindent \textbf{Keywords:} Logarithmic Laplacian, Indefinite weight, Eigenvalue problem.

\vspace{0.5em}
\tableofcontents

\section{Introduction}
Nonlocal operators have become central in the analysis of phenomena involving long-range interactions. In contrast to classical local operators, their action depends on values of a function over the entire domain, making them particularly suitable for modeling anomalous diffusion, phase transitions and L\'evy-type stochastic processes. This intrinsic nonlocality leads to rich mathematical structures but also introduces substantial analytical challenges, requiring the development of generalized functional frameworks and new variational techniques.

A fundamental example is the fractional Laplacian $(-\Delta)^s$, which arises naturally in probability theory, harmonic analysis and partial differential equations. For $s \in (0,1)$, it is defined by (see \cite[Section 3]{valdinoci})
\[
(-\Delta)^su(x) = c(N,s)\, \mathrm{P.V.} \int_{\R^N} \frac{u(x)-u(y)}{|x-y|^{N+2s}} ~dy,
\]
where $c(N,s):=\frac{2^{2s}\pi^{\frac{-N}{2}}s \Gamma(\frac{N+2s}{2})}{\Gamma(1-s)}$ is a normalization constant.

More recently, attention has turned to operators of logarithmic type, which can be interpreted as limiting cases of fractional operators. In particular, the Logarithmic Laplacian $L_\Delta$ arises as the first-order expansion of $(-\Delta)^s$ as $s \to 0^+$
\[
(-\Delta)^s u(x) = u(x) + s L_\Delta u(x) + o(s) \quad \text{in } L^p(\R^N), \; 1 < p \leq \infty.
\]
For $u \in C_c^2(\R^N)$, it admits the integral representation (see \cite{Chen-Weth})
\[
L_\Delta u(x) = c_N \int_{\mathcal{B}_1(x)} \frac{u(x)-u(y)}{|x-y|^N} ~dy 
- c_N \int_{\R^N \setminus \mathcal{B}_1(x)} \frac{u(y)}{|x-y|^N} ~dy + \rho_N u(x),
\]
where $c_N := \pi^{-N/2}\Gamma\!\left(\frac{N}{2}\right)$, $\rho_N := 2\ln 2 + \Psi\left(\frac{N}{2}\right) - \gamma$, $\Psi:=\frac{\Gamma'}{\Gamma}$ is the Digamma function and $\gamma=-\Gamma'(1)$ is the Euler-Mascheroni constant. The operator was systematically introduced in \cite{Chen-Weth} and its analytical properties have since been investigated in a growing body of literature, see, for instance, \cite{Arora-Giacomoni-Hajaiej-Vaishnavi, Arora-Mukherjee, Lara-Saldana,  Chen-Hauer-Weth, Chen-Veron, Chen-Veron-1, Fernandez-Saldana, Santamaria-Rios-Saldana}.
 
Eigenvalue problems for both local and nonlocal operators are fundamental in understanding qualitative properties of solutions. Eigenvalues reflect important analytical properties of the underlying differential operator and the domain, including positivity, boundary effects and qualitative behavior of solutions (see, for instance, \cite{Brezis-Nirenberg, Crandall-Rabinowitz}). They play a key role in variational methods and spectral theory (see \cite{Gilbarg-Trudinger, Reed-Simon}). For these reasons, the study of eigenvalue problems has become an essential part of the theory of partial differential equations.

On the other hand, weighted eigenvalue problems have been widely investigated for both local and nonlocal operators, including the classical Laplacian \cite{Allegretto, Anoop-Lucia-Ramaswamy, Brown, Szulkin-Willem}, the $p$-Laplacian \cite{Cuesta} and the fractional $p$-Laplacian \cite{Iannizzotto}. From a theoretical perspective, the presence of a weight function enriches the spectral structure of the problem and leads to new analytical challenges. The weight influences the distribution of eigenvalues, the behavior of eigenfunctions and the associated variational framework (see, for instance,  \cite{Brown-Lin, Hess-Kato}). Such problems also play a central role in the study of nonlinear and nonlocal equations, since the spectral properties of the underlying operator strongly influence the existence, multiplicity and qualitative behavior of solutions. 

For the fractional Laplacian, the eigenvalue problem  
\begin{equation*}
(-\Delta)^s u = \lambda g(x) u \quad \text{in } \Omega, 
\qquad u = 0 \quad \text{in } \mathbb{R}^N \setminus \Omega
\end{equation*}
has been extensively studied in recent years. In the case \(g \equiv 1\), the problem was first introduced by Lindgren and Lindqvist in \cite{Lindgren-Lindqvist} and subsequently investigated by several authors; see, for instance, \cite{Brasco-Parini, Franzina-Palatucci, Iannizzotto-Squassina}. Among the main developments, the existence of an infinite sequence of eigenvalues diverging to infinity was established in \cite{Iannizzotto-Squassina}. Further qualitative properties have also been obtained, including the simplicity of the first eigenvalue and a variational characterization of the second eigenvalue \cite{Brasco-Parini}, nodal properties of higher eigenfunctions \cite{Lindgren-Lindqvist} and Weyl-type asymptotic estimates \cite{Iannizzotto-Squassina}.  

The weighted case was later considered in \cite{Iannizzotto}, under the assumption that \(g \in L^{\frac{N}{2s}}(\Omega)\) with \(g^+ \not\equiv 0\). Using the \(\mathbb{Z}_2\)-cohomological index of Fadell and Rabinowitz, the authors proved the existence of an unbounded sequence of minimax eigenvalues, with the first eigenvalue being simple and isolated. In addition, they showed that higher eigenfunctions are nodal and established monotonicity properties of the first two eigenvalues with respect to the weight \(g\) and the domain \(\Omega\). We also refer to \cite{Asso et al, Ho-Perera-Sim-Squassina-2017, Ho-Sim-2019} for further results on related weighted nonlocal eigenvalue problems. In contrast to the fractional Laplacian, the spectral theory of the Logarithmic Laplacian is still comparatively less developed. 

For the Logarithmic Laplacian, the eigenvalue problem
\begin{equation*}
L_\Delta u = \lambda u \quad \text{in } \Omega, 
\qquad u = 0 \quad \text{in } \mathbb{R}^N \setminus \Omega
\end{equation*}
has attracted increasing attention in recent years. In \cite{Chen-Weth}, it was shown that, on bounded domains, the operator \(L_\Delta\) possesses a discrete spectrum together with a complete orthonormal basis of eigenfunctions. Moreover, the asymptotic expansion
\[
\lambda_{1,s} = 1 + s\,\lambda_{1,L} + o(s)
\quad \text{as } s \to 0^+,
\]
where \(\lambda_{1,s}\) and \(\lambda_{1,L}\) denote the first eigenvalues of the fractional and Logarithmic Laplacians, respectively, establishes a connection between the corresponding spectral theories. Subsequent developments include the convergence of eigenpairs as \(s \to 0^+\) \cite{Feulefack-Jarohs-Weth}, as well as spectral estimates such as Weyl-type asymptotics and eigenvalue bounds \cite{Chen-Veron, Laptev-Weth}. A fundamental feature distinguishing the Logarithmic Laplacian from the fractional Laplacian is, the first eigenvalue of the Logarithmic Laplacian may become non-positive and failure of maximum principle on sufficiently large domains (see \cite{Chen-Weth, Chen-Veron}).

Motivated by the analytical challenges associated with the Logarithmic Laplacian, in this paper we investigate the weighted eigenvalue problem
\begin{equation}\label{equ}
    L_\Delta u = \lambda \omega(x) u \quad \text{in } \Omega, 
    \qquad u = 0 \quad \text{in } \R^N \setminus \Omega,
\end{equation}
where \(\Omega \subseteq \R^N\) is a bounded domain and \(\omega : \Omega \to \mathbb{R}\) is a sign-changing weight function.

The aim of the present work is to develop a variational and spectral framework for problem \eqref{equ} while addressing the new challenges arising from nonlocal nature of the operator $L_\Delta$ and from the presence of the sign-changing weights.

The nonlocal nature of the operator \(L_\Delta\), together with the indefinite nature of the associated quadratic form (see below \eqref{quadratic-form}), gives rise to several analytical and technical difficulties that prevent a direct adaptation of the methods developed for the classical and fractional Laplacians. In particular, the bilinear form may change sign due to the presence of a nonlocal contribution involving interactions outside the unit ball, for which no definite sign information is available. As a consequence, the associated energy functional may fail to be bounded from below, creating substantial obstacles in the variational analysis of the minimization problem related to the Logarithmic Laplacian. 

Another challenging problem arises when considering eigenvalue problems with sign-changing weights. In contrast to the unweighted setting considered in \cite{Chen-Weth}, the associated energy functional may no longer be bounded from below, even under weighted normalization constraints. As a consequence, minimizing sequences may escape to infinity, leading to a loss of compactness, thereby obstructing the use of standard variational arguments. These difficulties reveal the necessity of imposing appropriate structural assumptions on the weight in order to establish a suitable variational framework. We refer the reader to the next section for a detailed presentation on the weight function assumptions, the principal difficulties arising in the analysis of the weighted eigenvalue problem \eqref{equ} and comparison of our findings with the existing literature.

\textbf{Outline of the paper:} The paper is organized as follows. In Section \ref{main-res}, we introduce the necessary definitions, notations and preliminary material required for the subsequent analysis and state the main results. In Section \ref{sec-existence}, we prove the existence of a sequence of eigenvalues of \eqref{equ} diverging to infinity and the principal eigenvalue by means of the Rayleigh quotient coincides with Lusternik-Schnirelman eigenvalue. Section \ref{sign properties} is devoted to the sign-changing nature of eigenfunctions, together with a nodal domain type estimate for eigenvalues other than principal eigenvalue. In Section \ref{behav-char}, we study the simplicity and isolation of the first eigenvalue, provide an alternative variational characterization of the second eigenvalue and establish the monotonicity of eigenvalues $\lambda_k$, $k \in \mathbb{N}$ and strict monotonicity of $\lambda_1$ and $\lambda_2$ with respect to both the domain $\Omega$ and the bounded weight function $\omega$. The paper ends with an Appendix \ref{Appendix} which contains the proof of Theorem \ref{Hardy-inequ}.

\section{Preliminaries and Main results}
\label{main-res}
\subsection{Function spaces and preliminaries}
Let $\Omega \subseteq \R^N$ be a bounded domain with Lipschitz boundary. For $q\in [1,\infty]$, we denote by $L^q(\Omega)$ the standard Lebesgue space with the norm
\[
\|u\|_{L^q(\Omega)}:= \left(\into |u|^q ~dx\right)^\frac{1}{q}\ \text{for} \ 1 \leq q < \infty \quad \text{and} \quad \|u\|_{L^\infty(\Omega)}:= \ess\sup_\Omega |u|.
 \]
 Define
\[
k, j: \mathbb{R}^N \setminus \{0\} \to \mathbb{R} \quad \text{as} \quad k(z) =  \frac{{\bf 1}_{B_1(z)}}{|z|^N} \quad \text{and} \quad j(z) =  \frac{{\bf 1}_{\mathbb{R}^N \setminus B_1(z)}}{|z|^N}.
\]
For the problem \eqref{equ}, the natural solution space $\h(\Omega)$ is defined as (see \cite{Chen-Weth})
\[
\begin{split}
\h(\Omega) = \bigg\{u\in L^2(\Omega): & \ u=0 \  \text{in} \ \R^N \setminus \Omega \ \text{and}\\
& \intr \abs{u(x)-u(y)}^2 k(x-y) ~dx ~dy < +\infty\bigg\}.
\end{split}
\]
The inner product and norm on $\h(\Omega)$ are given by
\[
\mathcal{E}(u,v) = \frac{c_N}{2}\intr (u(x)-u(y))(v(x)-v(y)) k(x-y) ~dx ~dy\]
and  
\[ \|u\|_{\h(\Omega)}= (\mathcal{E}(u,u))^\frac{1}{2}.\]
The quadratic form associated to $L_\Delta$ is defined as
\begin{equation}\label{quadratic-form}
    \mathcal{E}_L(u,v) = \mathcal{E}(u,v)-c_N \intr u(x)v(y) j(x-y) ~dx ~dy + \rho_N\int_{\R^N} uv ~dx.
\end{equation}
\begin{definition}
\label{weak-formulatn}
A function $u \in \h(\Omega)$ is said to be an eigenfunction (or equivalently a weak solution to \eqref{equ}) corresponding to an eigenvalue $\lambda\in \R$ of \eqref{equ}, if
\[\mathcal{E}_L(u,v)=\lambda\into \omega(x) u v ~dx \quad \text{for all} \ v\in \h(\Omega).\]
\end{definition}
\begin{theorem}\cite[Theorem 2.1]{Correa-DePablo}
    The space $\h(\Omega)$ is compactly embedded in $L^2(\Omega)$.
\end{theorem}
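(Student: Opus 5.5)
Since every $u\in\h(\Omega)$ vanishes outside the bounded set $\Omega$, I will use the Riesz--Fréchet--Kolmogorov criterion in $L^2(\R^N)$ on a bounded set $\mathcal{B}\subset\h(\Omega)$, say $\mathcal{E}(u,u)\le 1$. Tightness is automatic, because all functions are supported in $\overline{\Omega}$. Two things remain: (i) $\mathcal{B}$ is bounded in $L^2(\Omega)$, and (ii) $\sup_{u\in\mathcal{B}}\|u(\cdot+h)-u\|_{L^2}\to 0$ as $h\to 0$.

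For (i), I would prove a Poincaré-type inequality $\|u\|_{L^2}^2\le C\,\mathcal{E}(u,u)$. Fix $x\in\Omega$. The set $\{y\notin\Omega: |x-y|<1\}$ has measure at least some $\delta>0$ uniformly in $x$, at least after enlarging the domain or chaining over a finite number of steps. On that set $u(y)=0$, so
\[
\int_{B_1(x)}|u(x)-u(y)|^2|x-y|^{-N}\,dy\ \ge\ |u(x)|^2\int_{B_1(x)\setminus\Omega}|x-y|^{-N}\,dy\ \ge\ c\,|u(x)|^2 .
\]
This works directly for points within distance $1$ of $\R^N\setminus\Omega$. For points deeper inside a large domain, I would cover $\Omega$ by shells at distances $k/2$ from the complement and iterate, using $|u(x)|^2\le 2|u(x)-u(y)|^2+2|u(y)|^2$ and averaging $y$ over a ball of radius $1/2$ closer to the boundary. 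Since $\Omega$ is bounded, finitely many steps suffice.

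For (ii), the step I expect to be the main obstacle, I would average over translations. For $0<r<1$, set
\[
\Phi(r)=\int_{\R^N}\fint_{B_r}|u(x+z)-u(x)|^2\,dz\,dx .
\]
Since $|z|^{-N}\ge r^{-N}$ on $B_r$, we get $\Phi(r)\le C\int_{\R^N}\int_{B_r}|u(x+z)-u(x)|^2|z|^{-N}\,dz\,dx$. The function $r\mapsto\int_{\R^N}\int_{B_r}|u(x+z)-u(x)|^2|z|^{-N}\,dz\,dx$ is not uniformly small in $r$ merely from boundedness of $\mathcal{E}$, because of the logarithmic singularity. So I would instead write $\mathcal{E}(u,u)$ in Fourier variables: $\mathcal{E}(u,u)=\int|\hat u(\xi)|^2 m(\xi)\,d\xi$ with $m(\xi)=c_N\int_{B_1}(1-\cos(\xi\cdot z))|z|^{-N}dz\sim\ln|\xi|$ as $|\xi|\to\infty$, and $m>0$ otherwise. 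Then
\[
\|u(\cdot+h)-u\|_{L^2}^2=\int|e^{ih\cdot\xi}-1|^2|\hat u|^2\,d\xi\ \le\ 4R^2|h|^2\|u\|_{L^2}^2+\frac{4}{\inf_{|\xi|>R}m}\,\mathcal{E}(u,u).
\]
Since $m(\xi)\to\infty$, choosing $R$ large and then $|h|$ small makes this uniformly small, using (i) for the $L^2$ bound.

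Kolmogorov--Riesz then gives precompactness of $\mathcal{B}$ in $L^2(\Omega)$, so the embedding is compact. The key computations to check are $m(\xi)\gtrsim\ln|\xi|$ for large $|\xi|$, and positivity of $m$ (via $1-\cos\ge0$ with strict positivity on a set of positive measure), which also gives (i) more cleanly in Fourier form when combined with the support constraint.
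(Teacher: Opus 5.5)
Your proof is correct. The paper contains no argument of its own for this statement. It is imported from \cite[Theorem 2.1]{Correa-DePablo} and later superseded by the Orlicz embedding of Theorem \ref{cpct-embed}. So your proposal is a self-contained proof rather than a variant of anything in the text.

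\textbf{The Fourier step.} This is the heart of the matter, and it is sound. By Tonelli and Plancherel, $\mathcal{E}(u,u)=\int_{\mathbb{R}^N}m(\xi)|\hat u(\xi)|^2\,d\xi$, and the substitution $w=|\xi|z$ gives
\[
m(\xi)=c_N\int_{B_{|\xi|}}\bigl(1-\cos(e\cdot w)\bigr)|w|^{-N}\,dw,\qquad e=\xi/|\xi| .
\]
This formula shows at once that $m$ is radial, nondecreasing in $|\xi|$ and positive for $\xi\neq 0$. It also grows at least like $c\ln|\xi|$, since each dyadic annulus $\{2^j\le |w|<2^{j+1}\}$ contributes a fixed positive amount. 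Hence $\inf_{|\xi|>R}m=m(Re)$, and the two checks you list at the end are immediate.

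\textbf{Step (i) via Fourier.} The same identity handles (i) more cleanly than shell-chaining, as you suspected. Since $\|\hat u\|_{L^\infty}\le (2\pi)^{-N/2}|\Omega|^{1/2}\|u\|_{L^2}$, choose $r$ with $|B_r|(2\pi)^{-N}|\Omega|\le \tfrac12$. Then $\int_{|\xi|<r}|\hat u|^2\le \tfrac12\|u\|_{L^2}^2$, and absorbing this term gives $\|u\|_{L^2}^2\le \frac{2}{m(re)}\,\mathcal{E}(u,u)$. This is exactly the Poincar\'e inequality that makes $\mathcal{E}^{1/2}$ a norm.

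\textbf{If you keep the real-variable version of (i).} Two details need adjusting.
\begin{enumerate}
\item[(a)] The lower bound on $|B_1(x)\setminus\Omega|$ is uniform only for points at distance at most about $1/2$ from the complement, not $1$. It also holds only after replacing $\Omega$ by a ball $B_R\supseteq\Omega$. That replacement is legitimate, because $\mathbb{H}(\Omega)\subseteq\mathbb{H}(B_R)$ with the same form.
\item[(b)] The averaging ball should be something like $B_{1/4}\bigl(x+\tfrac34\,x/|x|\bigr)$, not a ball of radius $1/2$. This ball lies in $B_1(x)$ and strictly in earlier shells, since $|y|\ge |x|+\tfrac12$ there. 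A radius-$1/2$ ball only guarantees $|y|\ge |x|$, so the induction does not close.
\end{enumerate}

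\textbf{What your route buys.} It makes transparent that compactness comes precisely from $m(\xi)\to\infty$, that is, from the non-integrability of $|z|^{-N}$ at the origin. It delivers exactly the stated $L^2$ compactness. The sharper compactness into $L^\eta(\Omega)$ with $\eta(t)=o(t^2\ln(e+t))$, which the paper actually uses in its weighted estimates, is a different matter. It needs a further higher-integrability input, namely the continuous embedding into $L^{t^2\ln(e+t)}$, and does not follow from $m\to\infty$ alone.
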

\begin{lemma}
\label{est:lower-order-term}\cite[Lemma 3.4]{Santamaria-Saldana} Let $u,v\in L^2(\Omega)$ be such that $v=u=0$
in $\R^N\setminus\Omega$, then
\[
\intb \frac{|u(x)v(y)|}
{|x-y|^N} ~dx~dy
\leq \|u\|_{L^1(\Omega)}\|v\|_{L^1(\Omega)}\leq|\Omega|\|u\|_{L^2(\Omega)}\|v\|_{L^2(\Omega)}.\]
\end{lemma}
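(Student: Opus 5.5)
The inequality follows in two steps: first bound the kernel pointwise by $1$ on the region of integration, then apply Cauchy--Schwarz on $\Omega$.

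\emph{Step 1: kernel bound.} On the set $\{|x-y|\geq 1\}$ we have $|x-y|^{-N}\leq 1$. Since $u$ and $v$ vanish outside $\Omega$, the integrand is supported in $\Omega\times\Omega$. Hence
\[
\intb \frac{|u(x)v(y)|}{|x-y|^N}\,dx\,dy \leq \iint_{\Omega\times\Omega} |u(x)|\,|v(y)|\,dx\,dy .
\]
By Tonelli's theorem the right-hand side factorizes as $\|u\|_{L^1(\Omega)}\|v\|_{L^1(\Omega)}$. This gives the first inequality.

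\emph{Step 2: passage to $L^2$.} Because $\Omega$ is bounded, Cauchy--Schwarz gives
\[
\|u\|_{L^1(\Omega)}\leq |\Omega|^{1/2}\|u\|_{L^2(\Omega)}
\quad\text{and}\quad
\|v\|_{L^1(\Omega)}\leq |\Omega|^{1/2}\|v\|_{L^2(\Omega)}.
\]
Multiplying these two bounds yields the factor $|\Omega|$ and completes the chain of inequalities.

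Neither step presents a real obstacle. The only point needing care is the use of Tonelli's theorem, which applies because the integrand is nonnegative and measurable. In particular, the double integral is finite as soon as $u,v\in L^2(\Omega)$.
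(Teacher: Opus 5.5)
Your proof is correct: on $\{|x-y|\geq 1\}$ the kernel is at most $1$, Tonelli factorizes the integral over $\Omega\times\Omega$, and Cauchy--Schwarz gives $\|u\|_{L^1(\Omega)}\leq|\Omega|^{1/2}\|u\|_{L^2(\Omega)}$. The paper does not prove this lemma itself but imports it from Lemma 3.4 of Hern\'andez Santamar\'ia--Salda\~na, and your two steps are the standard argument for it.
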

 The embedding result established in \cite[Theorem 2.1]{Correa-DePablo} are further improved to optimal continuous and compact embeddings in \cite[Theorem 2.2]{Arora-Giacomoni-Vaishnavi}. To this end, we recall some preliminary definitions and notations.

\begin{definition}\cite[Definition 2.1.3]{Harjulehto-Hasto}\label{def:phi-function}
	A function $\zeta \colon [0,+\infty) \to [0,+\infty]$ is said to be a $\Phi$- prefunction, if $\zeta$ is increasing and satisfies 
    \[
    \zeta(0)=0, \quad \lim_{t\to 0^+} \zeta(t) = 0, \quad \text{and} \quad \lim_{t \to +\infty} \zeta(t) = +\infty.
    \]
Moreover, $\zeta$ is said to be a convex $\Phi$-function (denoted by $\zeta \in \Phi_c$) if $\zeta$ is left continuous and convex.
    \end{definition}

\begin{definition}\cite[Definition 2.4.1]{Harjulehto-Hasto}
    \label{young-conj-def}
   Let $\zeta : [0,\infty) \to [0,\infty]$. We denote by $\zeta^\ast$, the Young conjugate function 
of $\zeta$, which is defined as
\[
\zeta^\ast(s) := \sup_{t \geq 0} \{ ts - \zeta(t) \} \quad \text{for} \ s \geq 0.
\]
\end{definition}

Let $M(\Omega)$ be the set of all real valued Lebesgue measurable functions defined on $\Omega$. 
Let $\zeta \colon [0,+\infty) \to [0,+\infty]$ be a convex $\Phi$-function. The modular associated to $\zeta$ is defined as
	\[
		\varrho_\zeta (f) = \into \zeta(f (\abs{x})) ~dx.
	\]
	The set
	\begin{align*}
		L^\zeta (\Omega) := \{ f \in M(\Omega)\,:\, \varrho_\zeta (\lambda f) < +\infty \text{ for some } \lambda > 0 \}
	\end{align*}
	equipped with the Luxemburg norm
	\begin{equation}
    \label{orlicz-norm-def}
		\|f\|_{L^\zeta(\Omega)} = \inf \bigg\{ \lambda > 0 \,:\, \varrho_\zeta \l( \frac{f}{\lambda} \r)  \leq 1 \bigg\}
\end{equation}
is a Banach space (see \cite[Theorem 3.3.7]{Harjulehto-Hasto}). 
Moreover, the left inverse  $\zeta^{-1}_{\text{left}}:[0,\infty] \to [0,\infty]$ of $\zeta$ (see \cite[Definition 2.3.1]{Harjulehto-Hasto}) is given by
      \[
      \begin{split}
\zeta^{-1}_{\text{left}}(x)&=\inf\{\tau\geq 0: \zeta(\tau)\geq x\}.
      \end{split}
      \]

\begin{theorem}\cite[Theorem 2.2 (ii)]{Arora-Giacomoni-Vaishnavi}\label{cpct-embed} 
Let $\Omega\subseteq \R^N$ be a bounded domain and $\ph, \eta$ be convex $\Phi$-functions such that $\ph(t):= t^2 \ln(e+t)$ for all $t \geq 0$ 
and $\eta$ is defined as in \eqref{eta}.
Then, the embedding $\h(\Omega)\hookrightarrow L^\ph(\Omega)$ is continuous and $\h(\Omega)\hookrightarrow L^\eta(\Omega)$ is compact.
\end{theorem}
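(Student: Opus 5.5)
The plan is to prove the continuous embedding through a logarithmic Sobolev inequality adapted to the quadratic form $\mathcal{E}_L$. The compact embedding then follows from the compactness of $\h(\Omega)\hookrightarrow L^2(\Omega)$ (\cite[Theorem 2.1]{Correa-DePablo}) together with a uniform integrability argument. Since the definition \eqref{eta} is not reproduced above, I assume it has the property the argument needs: $\eta$ grows strictly slower than $\ph$ at infinity, that is $\eta(t)/\ph(t)\to 0$ as $t\to\infty$, and $\eta$ is controlled by $t^2$ near $0$.

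\emph{Step 1 (log-Sobolev bound).} For $u\in C_c^\infty(\Omega)$ I use the Fourier representation from \cite{Chen-Weth}:
\[
\mathcal{E}_L(u,u)=\int_{\R^N}2\ln|\xi|\,|\hat u(\xi)|^2\,d\xi .
\]
Combined with Beckner's logarithmic uncertainty inequality (Pitt's inequality differentiated at $s=0$), this gives, for $\|u\|_{L^2}=1$,
\[
\int_{\R^N} u^2\ln u^2\,dx\le \frac{N}{2}\,\mathcal{E}_L(u,u)+C_N .
\]
Lemma \ref{est:lower-order-term} bounds the $j$-term, so that
\[
\mathcal{E}_L(u,u)\le \mathcal{E}(u,u)+(c_N|\Omega|+|\rho_N|)\|u\|_{L^2}^2 .
\]
Together with the Poincaré-type bound $\|u\|_{L^2}\le C\|u\|_{\h(\Omega)}$ (a standard consequence of the compact embedding and the fact that $\mathcal{E}(u,u)=0$ forces $u\equiv 0$ when $u$ vanishes outside $\Omega$), rescaling yields
\[
\int_\Omega u^2\ln\frac{u^2}{\|u\|_{L^2}^2}\,dx\le C\|u\|^2_{\h(\Omega)} .
\]
By density this extends to all of $\h(\Omega)$.

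\emph{Step 2 (modular to Luxemburg norm).} Take $\|u\|_{\h(\Omega)}\le 1$, so that $\|u\|_{L^2}\le C$. I split $\Omega$ into $\{|u|\le 1\}$ and $\{|u|>1\}$ and use
\[
t^2\ln(e+t)\le 2t^2\quad\text{for } t\le 1,\qquad t^2\ln(e+t)\le t^2\,(1+\ln t)\quad\text{for } t>1 .
\]
On the set $\{|u|>\|u\|_{L^2}\}$ the integrand in Step 1 is nonnegative, which lets me bound $\int u^2\ln^+|u|$ by the left side of Step 1 plus $C\|u\|_{L^2}^2$. This gives $\varrho_\ph(u)\le K$ with $K$ independent of $u$. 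Convexity and $\ph(0)=0$ give $\ph(t/\lambda)\le \ph(t)/\lambda$ for $\lambda\ge 1$, so $\varrho_\ph(u/\max\{1,K\})\le 1$. Hence
\[
\|u\|_{L^\ph(\Omega)}\le \max\{1,K\}\,\|u\|_{\h(\Omega)},
\]
which is the continuous embedding.

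\emph{Step 3 (compactness).} Let $(u_n)$ be bounded in $\h(\Omega)$. By \cite[Theorem 2.1]{Correa-DePablo}, a subsequence satisfies $u_n\to u$ in $L^2(\Omega)$ and a.e., and $u\in\h(\Omega)$ by weak lower semicontinuity. Set $w_n=u_n-u$; Step 2 gives $\sup_n\varrho_\ph(w_n/\mu)<\infty$ for each fixed $\mu>0$. Fix $\lambda>0$ and $M>0$ and split
\[
\varrho_\eta(w_n/\lambda)=\int_{\{|w_n|\le M\}}\eta(|w_n|/\lambda)\,dx+\int_{\{|w_n|>M\}}\eta(|w_n|/\lambda)\,dx .
\]
The first integral tends to $0$ as $n\to\infty$ by dominated convergence. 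The second is at most
\[
\sup_{t>M/\lambda}\frac{\eta(t)}{\ph(t)}\;\sup_n\varrho_\ph(w_n/\lambda),
\]
which can be made small uniformly in $n$ by taking $M$ large. Hence $\varrho_\eta(w_n/\lambda)\to 0$ for every $\lambda>0$, and this is equivalent to $\|w_n\|_{L^\eta(\Omega)}\to 0$.

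The main obstacle is Step 1: obtaining the sharp logarithmic Sobolev inequality for the logarithmic symbol with explicit control of the lower-order terms, and making the rescaling argument respect the indefinite sign of $\mathcal{E}_L$. I would first try to derive it directly from Beckner's inequality for $\ln|\xi|$. If the constants cause trouble, the fallback is to differentiate the fractional Sobolev inequality $\|u\|_{L^{2N/(N-2s)}}^2\le S_s\,\langle(-\Delta)^s u,u\rangle$ at $s=0$, using the expansion $(-\Delta)^s u=u+sL_\Delta u+o(s)$ recalled in the introduction.
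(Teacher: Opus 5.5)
The paper gives no proof of this statement; it is quoted from \cite{Arora-Giacomoni-Vaishnavi}, so your argument has to stand on its own. Its architecture is sound: an $L^2\log L$ modular bound, then the Luxemburg norm by convexity, then compactness from $L^2$-compactness together with $\eta/\ph\to0$ at infinity. However, Step 1 as you primarily propose it rests on the wrong inequality. Pitt's inequality differentiated at $s=0$ is Beckner's logarithmic uncertainty principle, which in this paper is exactly Theorem~\ref{Hardy-inequ}. It bounds $\mathcal{E}_L(u,u)$ from below by $2\int u^2\ln(1/|x|)\,dx$ plus a multiple of $\|u\|_{L^2}^2$. For $L^2$-normalized functions concentrating at a point $x_0\neq0$, this weighted quantity stays bounded while $\int u^2\ln u^2\to\infty$. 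So it cannot produce a bound on $\int u^2\ln u^2$, and the difficulty is not one of constants.

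The inequality you need is Beckner's logarithmic Sobolev inequality, and your fallback is exactly how it is obtained. You must, however, use the sharp fractional Sobolev constant $S_{N,s}$ (Lieb's explicit constant, smooth in $s$ with $S_{N,0}=1$). With a non-sharp constant the two sides do not coincide at $s=0$, and differentiating yields nothing. For $u\in C_c^\infty(\R^N)$, both sides of $\|u\|_{L^{2N/(N-2s)}}^2\le S_{N,s}\int|\xi|^{2s}|\hat u|^2\,d\xi$ equal $\|u\|_{L^2}^2$ at $s=0$. Comparing right derivatives then gives
\[
\frac{2}{N}\int_{\R^N}u^2\ln\frac{u^2}{\|u\|_{L^2}^2}\,dx\le\mathcal{E}_L(u,u)+S_{N,0}'\,\|u\|_{L^2}^2,
\]
which is your Step 1 already in rescaled form. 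To reach all of $\h(\Omega)$, mollify in $\R^N$ rather than invoke density of $C_c^\infty(\Omega)$, which would require boundary regularity the statement does not assume. Then apply Fatou on a fixed bounded neighbourhood of $\Omega$; the integrands are bounded below by the constant $-\|u\|_{L^2}^2/e$.

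Three minor repairs are also needed.
\begin{enumerate}
\item In Step 2, the bound $\ln(e+t)\le1+\ln t$ fails for $1<t<e/(e-1)$. Use $\ln(e+t)\le\ln(e+1)+\ln t$ for $t\ge1$ instead.
\item In Step 3, Step 2 only covers $\|u\|_{\h(\Omega)}\le1$. The uniform bound on $\varrho_\ph(w_n/\lambda)$ for small $\lambda$ therefore needs the $\Delta_2$ property $\ph(2t)\le4(1+\ln2)\ph(t)$, or Step 2 redone at a general radius.
\item Condition \eqref{eta} actually asks that $\lim_{t\to0^+}\eta(t)/t$ exist, not for a $t^2$ bound near $0$. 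This is harmless, since Step 3 only uses $\eta(t)\to0$ as $t\to0^+$ and $\eta(t)/\ph(t)\to0$ as $t\to\infty$.
\end{enumerate}
With Step 1 carried out via the sharp Sobolev inequality and these repairs, the proof is complete.
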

     
Next, we recall some known definitions and lemmas.
Let $E$ be a real Banach space. 
\begin{definition}\cite[Page 1]{Rabinowitz}
Let $I:E\to \R$ be a $C^1$ functional. A critical point $u$ of  $I$ is a point at which $I'(u) =0$. The value of $I$ at $u$ is then called a {\it critical value} of $I$. 
A number $c$ is called a {\it regular value} of $I$ if 
\[
I'(u) \neq 0 \quad \text{for all } u \in E \text{ such that } I(u) = c.
\]
\end{definition}
Now, consider a $C^1$ functional $g$ such that $1$ is not a critical value of $g$ and $f$ a $C^1$ functional defined in a neighborhood of $S:= \{ z \in E \mid g(z) = 1 \}$. For $\beta \in \R$, define
$
S_\beta = \{ z \in E :g(z)=1+\beta \}.
$
Let $a$ be a regular value of $f|_S$.
\begin{definition}
\cite[Definition 2.3]{Bonnet}
A family $F_\alpha$ of subsets of $S$ is said to be {\it admissible} for $f$ at $a \in \R$ if $f$ is defined on $S_\beta$, for all $\beta \in [0,\alpha]$ and there exists $\mu,\delta,\epsilon_1>0$ such that
\[|f(x)-a|\leq\epsilon_1 \ \Longrightarrow \quad \|f'\big|_{S_\beta}(x)\|>\delta \ \text{and } \|g'(x)\|>\mu\]
for all $\beta \in [0,\alpha]$ and $x \in S_\beta.$
\end{definition}
\begin{lemma}\label{def-lem}\cite[Theorem 2.5]{Bonnet}

Let $f$ be a $C^1$ functional defined on a neighborhood of $S$ and $a$ be a regular value of the restriction $f|_S$ and $\alpha$ such that $F_\alpha$ is admissible for $f$ at the level $a$. Then, there exists $\eps > 0$ such that for all $0 < \epsilon_1 < \eps$, there exists a homeomorphism $\eta$ from $S$ onto $S$ satisfying
\begin{itemize}
    \item[\textnormal{(i)}] $\eta(z) = z$ if $f(z) \notin [a-\epsilon_1,\, a+\epsilon_1]$
    
    \item[\textnormal{(ii)}] $f(\eta(z)) \leq f(z)$ for all $z \in S$
    
    \item[\textnormal{(iii)}] $f(\eta(z)) \leq a - \epsilon_1$ for all $z \in S$ such that $f(z) < a + \epsilon_1$
    
    \item[\textnormal{(iv)}] If $S$ is symmetric ({\it i.e.,} $S = -S$) and $f$ is even, then $\eta$ is odd.
\end{itemize}
\end{lemma}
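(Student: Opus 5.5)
This is the deformation lemma of Bonnet, quoted from the literature. My plan follows the standard proof via a pseudo-gradient flow adapted to the constraint manifold and its neighbouring level sets $S_\beta$.

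\textbf{Step 1: the vector field.} On the strip $\{z: |f(z)-a|\le \epsilon_1,\ g(z)\in[1,1+\alpha]\}$, admissibility gives $\|f'|_{S_\beta}(z)\|>\delta$ and $\|g'(z)\|>\mu$. Using these bounds I will construct a locally Lipschitz tangential pseudo-gradient field $V$ with three properties:
\begin{itemize}
\item $\langle g'(z),V(z)\rangle=0$, so the flow preserves each level set $S_\beta$, in particular $S$;
\item $\langle f'(z),V(z)\rangle\ge \delta/2$;
\item $\|V(z)\|\le 2$.
\end{itemize}
The construction goes as follows. Near each point $z_0$, pick $w$ with $\|w\|\le 1$ and $\langle f'|_{S}(z_0),w\rangle>\delta$. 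Pick also $e$ with $\langle g'(z_0),e\rangle\ge \mu/2$ and $\|e\|\le1$. Then project: $w-\frac{\langle g'(z),w\rangle}{\langle g'(z),e\rangle}e$. By continuity of $f'$ and $g'$, this vector is tangent to the level set of $g$ through $z$ and still decreases $f$ at a definite rate. Paracompactness then gives a partition of unity gluing these local fields. Boundedness of the local fields comes from the lower bound $\mu$ on $\|g'\|$.

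\textbf{Step 2: cut-off and flow.} Fix $\eps$ smaller than the $\epsilon_1$ of admissibility. For $0<\epsilon_1<\eps$, take a Lipschitz cut-off $\chi$ equal to $1$ on $\{|f-a|\le\epsilon_1\}$ and $0$ off $\{|f-a|\le 2\epsilon_1\}$; shrink $\epsilon_1$ if needed so that $2\epsilon_1$ stays in the admissible window. Solve $\dot\sigma=-\chi(\sigma)V(\sigma)$. The field is bounded and locally Lipschitz, so the flow exists globally. Each $\sigma(t,\cdot)$ is a homeomorphism of $S$ onto $S$, with inverse the backward flow.

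\textbf{Step 3: the properties.} Set $\eta=\sigma(T,\cdot)$ with $T=8\epsilon_1/\delta$. Then:
\begin{itemize}
\item (i) holds after adjusting the labels of the band: points off the support of $\chi$ are fixed.
\item (ii) holds since $\frac{d}{dt}f(\sigma)\le -\chi\delta/2\le0$.
\item (iii): if $f(z)<a+\epsilon_1$ and the trajectory stayed above $a-\epsilon_1$ up to time $T$, then $f$ would drop by at least $T\delta/2>2\epsilon_1$, a contradiction.
\item (iv): if $S=-S$ and $f$ is even, symmetrise $V$ by $\tilde V(z)=\frac12(V(z)-V(-z))$. This preserves the estimates because $f'$ is odd and $g'$ is odd on a symmetric set. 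The flow is then odd.
\end{itemize}

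\textbf{Main obstacle.} The delicate point is Step 1: keeping trajectories on the constraint while retaining a uniform decrease rate. This is exactly what the bounds $\delta$ and $\mu$, uniform in $\beta\in[0,\alpha]$, provide. They are also what keeps $V$ bounded, and hence the flow globally defined.
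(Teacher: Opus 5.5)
\textbf{The gap is in Steps 1--2, and it is exactly the difficulty that Bonnet's lemma exists to overcome.} The statement assumes only $g\in C^1$, so $z\mapsto g'(z)$ is merely continuous. Your local fields $w-\frac{\langle g'(z),w\rangle}{\langle g'(z),e\rangle}e$, and hence the glued field $V$, are therefore only continuous, not locally Lipschitz. The claim in Step 2 that ``the field is bounded and locally Lipschitz, so the flow exists globally'' is unjustified. Without it you lose existence, since continuous fields on infinite-dimensional Banach spaces need not have solutions, and you also lose uniqueness. So $\sigma(T,\cdot)$ is not a well-defined homeomorphism.

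No cleverer choice of $V$ fixes this. On a merely $C^1$ level set, a locally Lipschitz field that is tangent to $S$ and bounded away from zero need not exist. Take $g(x,y)=y-|x|^{3/2}$ on $\R^2$, $S=g^{-1}(1)$, $f(x,y)=x$, $a=0$. A locally Lipschitz field tangent to $S$ with $\langle f',V\rangle\ge\delta/2$ near $(0,1)$ would make the unit tangent $x\mapsto(1,\tfrac32\,\mathrm{sgn}(x)|x|^{1/2})/\sqrt{1+\tfrac94|x|}$ Lipschitz, which it is not.

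A telltale sign is that your argument never uses the levels $S_\beta$ with $\beta\in(0,\alpha]$. If the flow stayed on $S$, the uniform bounds on neighbouring levels in the admissibility condition would be superfluous. What is needed is, roughly, a field that is locally Lipschitz (glued from locally constant vectors) but only approximately tangent, with a small transversal part so that $g$ moves monotonically. Trajectories then leave $S$ but remain in $\bigcup_{\beta\in[0,\alpha]}S_\beta$, where $\delta$ and $\mu$ keep $f$ decreasing uniformly. One must then return to $S$, using $\|g'\|>\mu$, in a way that is continuous, invertible and does not increase $f$. What you wrote is the classical deformation lemma for $C^{1,1}$ constraints. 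That would cover the paper's application, where $g=J$ is a bounded quadratic form on $\h(\Omega)$, but it does not prove the lemma as stated.

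\textbf{Your phrase ``(i) holds after adjusting the labels'' hides a real defect in the transcribed statement.} With the same $\epsilon_1$, (i) and (iii) are incompatible with continuity of $\eta$ whenever $S\cap\{f=a+\epsilon_1\}\neq\emptyset$. At such a point $z_0$ we have $(f|_S)'(z_0)\neq0$, so $z_0$ is a limit of points with $f>a+\epsilon_1$, which (i) fixes, forcing $\eta(z_0)=z_0$. It is also a limit of points with $f<a+\epsilon_1$, which (iii) sends below $a-\epsilon_1$, forcing $f(\eta(z_0))\le a-\epsilon_1$. The intended version has the fixed $\eps$ in (i). Your cut-off gives that version, provided $\chi$ is supported in $\{|f-a|\le\eps\}$ rather than $\{|f-a|\le2\epsilon_1\}$; the latter is too large when $\epsilon_1>\eps/2$.

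In (iv), $S=-S$ does not make $g'$ odd. What you actually need, and what does hold, is $\ker g'(-z)=\ker g'(z)$ for $z\in S$.
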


\begin{definition}
 \label{genus-def}   \cite[Page 45]{Rabinowitz}
Let 
\[V:=\{A\subseteq E\setminus\{0\}:\, A \text{ is closed in } E \text{ and } A=-A\}.\]
For $A\in V$, define the (Kranoselskii) genus of $A$ to be 
\[\gamma(A)=\min\{n \in \N:\, \text{there is an odd map } \nu \in C(A, \R^n \setminus \{0\}\}.\]
When there does not exist a finite such $n$, set $\gamma(A)=+\infty$. Finally, set $\gamma(\emptyset) = 0$.
\end{definition}

\begin{lemma}
\label{genus-prop}
 \cite[Proposition 7.5]{Rabinowitz}
  Let $A, B \in V$. Then,
  \begin{enumerate}
\item[\textnormal{(i)}] If $x\neq 0,$ $\gamma(\{x\}\cup\{-x\})=1$
\item[\textnormal{(ii)}] If there exists an odd map $f \in C(A,B)$ then $\gamma(A)\leq \gamma(B)$
\item[\textnormal{(iii)}] Let $Z$ be a subspace of $E$ with codimension $k$ and $\gamma(A) > k$. Then
\[
A \cap Z \neq \emptyset.
\] 
\end{enumerate}
\end{lemma}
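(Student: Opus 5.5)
The three properties of the Krasnoselskii genus from Definition \ref{genus-def} will be proved directly from the definition. Two standard tools suffice: the Tietze extension theorem, to extend continuous maps off closed sets, and the Borsuk--Ulam theorem, which states that there is no odd continuous map from the unit sphere $S^{k}\subset\R^{k+1}$ into $\R^{k}\setminus\{0\}$.

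\emph{(i).} Let $A=\{x,-x\}$ with $x\neq 0$. Define $\nu(x)=1$ and $\nu(-x)=-1$. Then $\nu$ is odd, continuous (since $A$ is discrete), and takes values in $\R\setminus\{0\}$, so $\gamma(A)\le 1$. Moreover $\gamma(A)\ge 1$, because $A\neq\emptyset$ and there is no map into $\R^0\setminus\{0\}=\emptyset$. Hence $\gamma(A)=1$.

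\emph{(ii).} If $\gamma(B)=+\infty$ there is nothing to prove. If $\gamma(B)=0$, then $B=\emptyset$, which forces $A=\emptyset$ since a map $A\to B$ exists. Otherwise, let $n=\gamma(B)$ and take an odd $\nu\in C(B,\R^n\setminus\{0\})$. The composition $\nu\circ f$ is then odd, continuous, and maps $A$ into $\R^n\setminus\{0\}$. Therefore $\gamma(A)\le n$.

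\emph{(iii).} This is the substantive step, and we argue by contradiction. Suppose $A\cap Z=\emptyset$. Since $Z$ has codimension $k$, write $E=Z\oplus W$ with $\dim W=k$ and $W$ spanned by $e_1,\dots,e_k$. Let $P:E\to W$ be the continuous linear projection with kernel $Z$; in a Banach space it is given by $Pu=\sum_i \ell_i(u)e_i$, where the $\ell_i$ are continuous functionals vanishing on $Z$ (this requires $Z$ to be closed, as is implicit in the statement). For $u\in A$ we have $u\notin Z$, so $Pu\neq 0$. Identifying $W\cong\R^k$, the map $P|_A$ is odd, continuous, and takes values in $\R^k\setminus\{0\}$. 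This gives $\gamma(A)\le k$, contradicting $\gamma(A)>k$. Hence $A\cap Z\neq\emptyset$.

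Notably, (iii) only needs the easy upper bound on the genus, not Borsuk--Ulam. The only delicate point I expect is the existence of the continuous projection onto a finite-dimensional complement. This follows from Hahn--Banach: extend the coordinate functionals from $W$, which is possible because $Z$ is closed and $Z\oplus W=E$ with $W$ finite-dimensional. Borsuk--Ulam would be needed only for the stronger normalization $\gamma(S^{k-1})=k$, which the statement does not require.
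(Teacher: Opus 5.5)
Your proof is correct and follows the standard textbook arguments. The paper itself gives no proof and simply cites Rabinowitz. Your three steps (the explicit two-point map for (i), the composition $\nu\circ f$ for (ii), and, for (iii), projecting $A$ along $Z$ onto a $k$-dimensional complement to obtain an odd continuous map into $\R^k\setminus\{0\}$) are the standard arguments behind that citation.

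The one point to tighten is why the projection is continuous. It holds because $E/Z$ is a finite-dimensional normed space when $Z$ is closed. Your phrase about Hahn--Banach extensions of the coordinate functionals from $W$ does not do this job, since such extensions need not vanish on $Z$. Closedness of $Z$ is available in the paper's only use of (iii), namely the closed subspace $H_k$.
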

\subsection{Main assumptions and results}

We assume the weight function $\omega:\Omega \to \R$ in \eqref{equ} satisfies the following conditions:
\begin{enumerate}
[label=\textnormal{($\omega_0$)}, ref=\textnormal{$\omega_0$}]
\item  \label{assump w0}  $\omega^+ := \max\{\omega,0\} \not \equiv 0.$
\end{enumerate}

\begin{enumerate}
[label=\textnormal{($\omega_1$)}, ref=\textnormal{$\omega_1$}]
\item  \label{assump w1} 
$\omega \in  L^{\psi^\ast}(\Omega)$ with
\begin{equation}\label{psi}
    \psi(t):=\eta(\sqrt{t}) \quad \text{for all} \ t \geq 0,
\end{equation}
where $\eta:[0,\infty) \to [0,\infty)$ is a convex $\Phi$-function (see Definition \ref{def:phi-function}) such that
\begin{equation}\label{eta} 
    {\lim\limits_{t\to 0^+} \frac{\eta(t)}{t} \text{ exists in } \mathbb{R}}, \quad \text{and} \quad \lim\limits_{t \to \infty} \frac{\eta(t)}{t^2 \ln(e+t)} = 0.
\end{equation} 
Here, $\psi^\ast$ denotes the Young conjugate of $\psi$ (see Definition \ref{young-conj-def}).
\end{enumerate}

\begin{enumerate}
[label=\textnormal{($\omega_2$)}, ref=\textnormal{$\omega_2$}]
\item  \label{assump w2} 
There exist $\lambda_0 \in \R \setminus \{0\}$ and $\alpha\in \R^+$ such that
\[\ln\l(\frac{1}{|x|^2}\r)-\lambda_0\,\omega(x)\geq \alpha-\ln 4-2\Psi(\frac{N}{4}) \quad \text{a.e. $x$} \in \Omega.\]
\end{enumerate}

\begin{remark}
Some examples of weight function $\omega$ satisfying assumptions \eqref{assump w0}-\eqref{assump w2} are
\begin{enumerate}
    \item  [\textnormal{(i)}] Indefinite weight function: $\omega \in L^\infty(\Omega)$ such that $\omega^+\not\equiv 0.$ Since $\Omega$ is bounded domain, there exists a $\delta>0$ such that $|x| \leq \delta$ for all $x \in \overline{\Omega}$. Then, for any $\alpha \in \mathbb{R}^+$ and $\lambda_0 \leq \left(\ln 4 + 2\Psi(\frac{N}{4}) - 2 \ln \delta - \alpha\right)/\|\omega\|_{L^\infty(\Omega)}$, \eqref{assump w0}-\eqref{assump w2} holds true.
    \item  [\textnormal{(ii)}] Singular weight function: For any $\beta \in (0,1)$,  $\gamma < \frac{N}{2}$ and $g \in L^\infty(\Omega)$ satisfying \eqref{assump w2}, we define
    \[
\omega(x)=\begin{cases} \ln^\beta \l(\frac{1}{|x|^{\gamma}}\r) & x\in \mathcal{B}_1(0)\\
g(x) &x \in \Omega\setminus \mathcal{B}_1(0).
\end{cases}
\]
and $\psi(t) = \eta(\sqrt{t}) = t (\ln(e+\sqrt{t}))^{\beta}.$ It is easy to check that there exist $\lambda_0 \in \R \setminus \{0\}$ and $\alpha\in \R^+$ such that $\omega$ satisfies \eqref{assump w2}. In view of Definition \ref{young-conj-def}, by straightforward computations, there exists a $M \gg 1$ such that
\[
\psi^\ast (s) \sim s^{\frac{\beta-1}{\beta}} e^{2s^\frac{1}{\beta}} \quad \text{for} \quad s \gg M.
\]
Then, 
\[
\begin{split}
    \int_{\mathcal{B}_{M_\beta}(0)} \psi^\ast(\omega(x)) ~dx 
    & \leq C \int_{\mathcal{B}_{M_\beta}(0)}  \l(\ln \frac{1}{|x|^{\gamma}}\r)^{\beta-1} \frac{1}{|x|^{2\gamma}} ~dx < +\infty
\end{split}
\]
where $M_\beta := \exp(-M^\beta).$
\end{enumerate}
\end{remark}

For $u\in\h(\Omega)$, we define the functionals $I, J: \h(\Omega) \to \R$ such that
\[
I(u):=\mathcal{E}_L(u,u) \quad \text{and} \quad J(u)=\into \omega(x) u^2 ~dx.
\]

Note that $0$ is the only critical point of the functional $J$. Hence, $1$ is a regular value of $J$. Thus, we define the constraint manifold $M$ as follows
\[
M:=\{u \in \h(\Omega)\,:\, J(u)=1\}.
\]
For all $k \in \mathbb{N}$, set
\[F_k:=\{A\subseteq M\,:\, A=-A \ \text{is closed }, \ \gamma(A)\geq k\}\]
where $\gamma(\cdot)$ represent the Kranoselskii's genus defined in Definition \ref{genus-def}. We define a sequence of variational (Lusternik-Schnirelman) eigenvalues of \eqref{equ} as
    \begin{equation}\label{lambda}
        \lambda_k:=\inf_{A\in F_k} \sup_{u\in A} I(u) \quad \text{for all} \quad k \in \mathbb{N}.
    \end{equation} 
The first result corresponds to the existence of a unbounded sequence of eigenvalues and the principal eigenvalue by means of the Rayleigh quotient coincides with Lusternik-Schnirelman eigenvalue.
\begin{theorem}
    Assume \eqref{assump w0}-\eqref{assump w2}  hold true. The problem \eqref{equ} admits a sequence of eigenvalues 
    \[\lambda_0  \leq \lambda_1\leq\lambda_2\leq \cdots \leq \lambda_k\leq \cdots\] that diverges to $+\infty,$ where $\lambda_0$ is the constant in \eqref{assump w2}. Moreover, the infimum
    \begin{equation*}
     \lambda_1^R :=\inf_{u\in M}\ \mathcal{E}_L(u,u),
\end{equation*} is achieved and coincides with $\lambda_1.$
\end{theorem}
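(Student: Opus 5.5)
The proof splits into three steps: a lower bound of $I$ on $M$ coming from \eqref{assump w2}, Palais--Smale compactness together with the deformation Lemma \ref{def-lem}, and the identification $\lambda_1=\lambda_1^R$.

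\textbf{Step 1: $I\geq \lambda_0$ on $M$.} This is the key step and, I expect, the main obstacle. I would use the logarithmic Hardy-type inequality of Theorem \ref{Hardy-inequ}, whose proof is in the Appendix. I expect it to read
\[
\mathcal{E}_L(u,u)\geq \into \Big(\ln\tfrac{1}{|x|^2}+\ln 4+2\Psi(\tfrac N4)\Big)u^2\,dx \quad \text{for } u\in\h(\Omega),
\]
which is the log-Laplacian analogue of the Pitt/Beckner inequality obtained by differentiating the fractional Hardy inequality at $s=0$. Combining it with \eqref{assump w2} gives
\[
I(u)-\lambda_0 J(u)\geq \alpha\|u\|_{L^2(\Omega)}^2 .
\]
Hence $I\geq\lambda_0$ on $M$, and so $\lambda_1\geq\lambda_0$.

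This also gives coercivity. Write $I(u)=\|u\|_{\h}^2-c_N\iint uv\,j+\rho_N\|u\|_2^2$ and use Lemma \ref{est:lower-order-term}. Then any sequence $(u_n)\subset M$ with $I(u_n)$ bounded satisfies
\[
\|u_n\|_{\h}^2\leq I(u_n)+C\|u_n\|_2^2,
\]
and $\|u_n\|_2^2\leq \alpha^{-1}(I(u_n)-\lambda_0)$. So sublevel sets of $I$ on $M$ are bounded in $\h(\Omega)$.

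\textbf{Step 2: compactness and existence of critical values.} I would check that $J$ is weakly continuous on $\h(\Omega)$. By Theorem \ref{cpct-embed}, $\h\hookrightarrow L^\eta$ compactly, so $u\mapsto u^2$ is compact into $L^\psi$ since $\psi(t)=\eta(\sqrt t)$. Since $\omega\in L^{\psi^*}$, the Orlicz H\"older inequality makes $J$ and $J'$ compact. Likewise the lower order part of $I$ is compact on $L^2$ via Lemma \ref{est:lower-order-term} and the compact embedding $\h(\Omega)\hookrightarrow L^2(\Omega)$.

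For the Palais--Smale condition, take $u_n\in M$ with $I(u_n)\to c$ and $I'(u_n)-\mu_n J'(u_n)\to0$. Testing with $u_n$ gives $\mu_n\to c$. Pass to a weakly convergent subsequence; the identity $\|u_n\|_{\h}^2$-part $=\mu_n J'(u_n)u_n+$ compact terms $+\,o(1)$ then yields strong convergence.

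Next, $F_k\neq\emptyset$ for every $k$. Since $\omega^+\not\equiv0$, choose $k$ disjoint functions in $C_c^\infty(\{\omega>0\})$, or approximate ones if $\omega^+$ is only measurable, with $J>0$. Their span $V_k$ satisfies $J>0$ on $V_k\setminus\{0\}$, so $V_k\cap M$ is a sphere of genus $k$ and $\lambda_k<\infty$. Monotonicity in $k$ is clear from $F_{k+1}\subset F_k$.

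If some $\lambda_k$ were not a critical value of $I|_M$, the deformation Lemma \ref{def-lem}, applied with an odd homeomorphism (item (iv)) together with Lemma \ref{genus-prop}(ii), would push an almost optimal $A\in F_k$ below $\lambda_k$, a contradiction. Hence each $\lambda_k$ is an eigenvalue, with Lagrange multiplier equal to $\lambda_k$ by Definition \ref{weak-formulatn}.

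To prove $\lambda_k\to\infty$, argue by contradiction and suppose $\lambda_k\leq C$ for all $k$. Two standard arguments are available:
\begin{itemize}
\item the set $K_C$ of critical points at levels $\leq C$ is compact by Palais--Smale, so it has finite genus, and the usual Rabinowitz argument gives a contradiction;
\item alternatively, take $Z_k$ of codimension $k$ spanned by the $L^2$-tails of an orthonormal basis of $\h(\Omega)$. By Lemma \ref{genus-prop}(iii) each $A\in F_{k+1}$ meets $Z_k$. On $Z_k\cap M$ the compactness of $J$ forces $\|u\|_{\h}\to\infty$, hence $I\to\infty$ by Step 1.
\end{itemize}

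\textbf{Step 3: $\lambda_1=\lambda_1^R$.} The inequality $\lambda_1^R\leq\lambda_1$ follows since any $A\in F_1$ satisfies $\sup_A I\geq\lambda_1^R$. Conversely, each $\{u,-u\}\subset M$ has genus $1$ by Lemma \ref{genus-prop}(i), so $\lambda_1\leq I(u)$ for every $u\in M$. Attainment comes from the direct method. A minimizing sequence is bounded by Step 1 and converges weakly. Then $J(u_n)\to J(u)=1$ by compactness of $J$, and $I$ is weakly lower semicontinuous, since $\|\cdot\|_{\h}^2$ is lower semicontinuous and the remaining terms are compact.
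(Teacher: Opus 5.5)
Your proposal is correct and follows the same route as the paper. Theorem \ref{Hardy-inequ} with \eqref{assump w2} gives $I\geq\lambda_0+\alpha\|u\|_{L^2(\Omega)}^2$ on $M$ and hence coercivity; Palais--Smale together with Lemma \ref{def-lem} and the genus makes each $\lambda_k$ a critical value; divergence comes from intersecting sets of $F_{k+1}$ with a finite-codimensional tail space, where weak compactness of $J$ forces large energy; and $\lambda_1=\lambda_1^R$ follows from the direct method plus $\gamma(\{\pm u\})=1$. Your variations are harmless improvements: an $\h(\Omega)$-orthonormal basis replaces the Ovsepian--Pelczynski biorthogonal system (legitimate since $\h(\Omega)$ is Hilbert), strong convergence is shown directly instead of via the $(S)$ property, and your check that $F_k\neq\emptyset$ (made precise by centering the disjoint bumps at Lebesgue points of $\omega$ where $\omega>0$) fills in a point the paper leaves implicit.
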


The existence of the sequence of eigenvalues $(\lambda_k)_{k \in \mathbb{N}}$ stated above is established by means of the Lusternik-Schnirelman variational principle under assumptions \eqref{assump w0}-\eqref{assump w2}. In particular, assumption \eqref{assump w0} ensures that the associated constrained manifold $M$ is nonempty, thereby providing a suitable variational framework for the eigenvalue problem.

In the context of the fractional Laplacian, the corresponding weighted eigenvalue problem was studied in \cite{Iannizzotto} under the requirement that the weight function belongs to $L^{\frac{N}{2s}}(\Omega)$. The exponent $\frac{N}{2s}$ appears naturally as the conjugate exponent of $\frac{2_s^\ast}{2}$, where $2_s^\ast$ denotes the fractional critical Sobolev exponent. Consequently, in the limiting regime $s \to 0^+$, the natural admissible class of weights reduces to $L^\infty(\Omega)$. However, due to the optimal continuous and compact embedding results obtained in \cite[Theorem 2.2]{Arora-Giacomoni-Vaishnavi}, we are able to work with a substantially broader class of admissible weights, namely the Orlicz-type spaces described in assumption \eqref{assump w1}. This setting not only remains consistent with the weighted framework arising in the fractional Laplacian case, but also significantly extends it to a larger class of admissible weights. Moreover, the choice of the Orlicz space in \eqref{assump w1} is crucial in deriving a key inequality established in Lemma \ref{lem:norm-est}, which in turn plays an essential role in obtaining convergence estimates for the associated weighted integrals. However, the techniques used in the convergence estimates \cite[Lemma 2.1]{Iannizzotto} are not applicable in our framework, owing to the lack of density of smooth functions in the Orlicz-type space defined in \eqref{assump w1}.

The assumption \eqref{assump w2} is inspired by Pitt-type inequalities (see \cite[Theorem 1]{Beckner}) and plays a role analogous to that of Hardy-type inequalities within the present framework. In particular, it allows us to recover an appropriate variational structure even in the presence of singular and indefinite weights. This condition ensures that the associated energy functional is bounded from below on the constrained manifold $M$ and further yields the boundedness of Palais-Smale sequences required for the application of the Lusternik-Schnirelman principle. Finally, in order to prove that the spectrum diverges to infinity, we exploit the separability of the underlying energy space together with the existence of a biorthogonal double sequence in $\h(\Omega)\times (\h(\Omega))^\ast$.

Next, we prove the sign properties of the eigenfunctions and nodal domain inequality. 
\begin{theorem}
   Assume \eqref{assump w0}-\eqref{assump w2}  hold true. Then the following hold:
    \begin{enumerate}
        \item[\textnormal{(i)}] Eigenfunction corresponding to the first eigenvalue $\lambda_1$ has a constant sign in the domain $\Omega$.
        \item[\textnormal{(ii)}] Let $\omega \in L^\infty(\Omega).$ Then, any eigenfunction corresponding to eigenvalue $\lambda\neq \lambda_1$ changes sign in the domain $\Omega$.
        \item[\textnormal{(iii)}] Let $\psi$ satisfies \eqref{psi}-\eqref{eta} and $\vartheta_1, \vartheta_2$ be Orlicz functions in $\Phi_c$ such that \[(\vartheta_{1})^{-1}_{\text{left}}(x) (\vartheta_2)^{-1}_{\text{left}}(x)\leq (\psi^\ast)^{-1}_{\text{left}}(x)\]
and $\omega\in L^{\psi^\ast}(\Omega)\cap L^{\vartheta_1}(\Omega)$. Let $v$ be an eigenfunction associated to an eigenvalue $\lambda \neq \lambda_1$. Then, we have 
   \[|\lambda|\geq \frac{\vartheta_2^{-1}(\frac{1}{|\Omega^\pm_v|})\l(1- \max\{(c_N|\Omega^\pm_v|-\rho_N),0\}S_2\r)}{2S_1\|\omega\|_{L^{\vartheta_1}(\Omega)}},\]
      where $\Omega^\pm:=\{x\in \Omega:u(x)\gtrless 0\}$ and $S_1, S_2$ are the best constants of the embeddings $\h(\Omega)\hookrightarrow L^\ph(\Omega)$ and $\h(\Omega) \hookrightarrow L^2(\Omega),$ respectively.
    \end{enumerate}
\end{theorem}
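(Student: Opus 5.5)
For (i) I will use the variational characterization $\lambda_1=\lambda_1^R=\min_M \mathcal{E}_L(u,u)$ from the previous theorem, together with the key inequality
\[
\mathcal{E}_L(|u|,|u|)\leq \mathcal{E}_L(u,u),
\]
which I will prove with strict inequality unless $u^+\equiv0$ or $u^-\equiv0$. Writing $u=u^+-u^-$, one has
\[
\mathcal{E}_L(u,u)=\mathcal{E}_L(u^+,u^+)+\mathcal{E}_L(u^-,u^-)-2\mathcal{E}_L(u^+,u^-),
\]
and since $u^+u^-=0$ pointwise,
\[
\mathcal{E}_L(u^+,u^-)=-c_N\intr u^+(x)u^-(y)\bigl(k(x-y)+j(x-y)\bigr)\,dx\,dy=-c_N\intr \frac{u^+(x)u^-(y)}{|x-y|^N}\,dx\,dy\leq 0 .
\]
Since $J(|u|)=J(u)$, if $u$ is a minimizer on $M$ then so is $|u|$, and the strictness forces $u^+u^-$-interaction to vanish, i.e.\ $u^+\equiv 0$ or $u^-\equiv 0$. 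This gives constant sign; no strong maximum principle is needed.

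For (ii) I will argue by contradiction. Suppose $v\geq 0$ is an eigenfunction for $\lambda\neq\lambda_1$, and let $u_1>0$ be a first eigenfunction, normalized in $M$; positivity a.e.\ should follow from (i) and the strict inequality above applied to the kernel. Testing the two equations crosswise gives
\[
(\lambda-\lambda_1)\into \omega u_1 v\,dx=0,
\]
so $\into\omega u_1v=0$, which is not yet a contradiction. Instead I will use a Picone-type/hidden-convexity argument in the style of Brasco–Franzina. Consider $\sigma_t=\bigl((1-t)u_1^2+tv^2\bigr)^{1/2}$ with $v$ normalized so that $J(v)=1$ (if $J(v)\le0$, test the equation with $v$ to get a sign contradiction, handled separately). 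Then $J(\sigma_t)=1$, so $\sigma_t\in M$, and the discrete convexity inequality for the kernel part gives
\[
\mathcal{E}_L(\sigma_t,\sigma_t)\leq (1-t)\lambda_1+t\lambda .
\]
The $j$-term and the $\rho_N$-term need care: the $j$-term is $-c_N\iint \sigma_t(x)\sigma_t(y)j$, and the Cauchy–Schwarz inequality $\sigma_t(x)\sigma_t(y)\geq (1-t)u_1(x)u_1(y)+tv(x)v(y)$ runs in the right direction because of the minus sign. Running this with $\lambda_1$ and $\lambda$ interchanged yields equality in the convexity, forcing $v$ to be proportional to $u_1$, hence $\lambda=\lambda_1$, a contradiction. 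The hypothesis $\omega\in L^\infty$ is presumably used to guarantee boundedness of eigenfunctions, making these manipulations legitimate. This is the step I expect to be the main obstacle: justifying that $\sigma_t\in\h(\Omega)$ and establishing the strict convexity and equality case for the nonlocal kernel.

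For (iii) I will test the equation with $v^+$. Since $\mathcal{E}_L(v^-,v^+)\le0$ (its negative is $\ge 0$),
\[
\mathcal{E}_L(v^+,v^+)\leq \lambda\into\omega (v^+)^2\,dx .
\]
On the left, $\|v^+\|^2_{\h}-c_N\iint v^+v^+j+\rho_N\|v^+\|_2^2$ is bounded below, via Lemma \ref{est:lower-order-term} applied on $\Omega_v^+$ (giving $\|v^+\|_1^2\le|\Omega^+_v|\|v^+\|_2^2$), by
\[
\|v^+\|_{\h}^2-\max\{c_N|\Omega^+_v|-\rho_N,0\}S_2\|v^+\|_{\h}^2 .
\]
On the right, I will use the generalized Hölder inequality in Orlicz spaces under the hypothesis $(\vartheta_1)^{-1}(\vartheta_2)^{-1}\le(\psi^*)^{-1}$, together with the Young-conjugate Hölder inequality for $\psi,\psi^*$, to bound
\[
\into\omega(v^+)^2\le 2\|\omega\|_{L^{\vartheta_1}}\,\|\mathbf 1_{\Omega_v^+}\|_{L^{\vartheta_2}}\,\|(v^+)^2\|_{L^\psi}.
\]
Here $\|\mathbf 1_{\Omega^+_v}\|_{L^{\vartheta_2}}=1/\vartheta_2^{-1}(1/|\Omega^+_v|)$, and $\|(v^+)^2\|_{L^\psi}\le\|v^+\|_{L^\ph}^2\le S_1\|v^+\|_\h^2$ by comparison of $\psi$ with $\ph$ and Theorem \ref{cpct-embed}. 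Dividing by $\|v^+\|_\h^2\neq0$, which is nonzero by (ii), gives the estimate; the same argument with $v^-$ gives the other sign.
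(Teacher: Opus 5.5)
Your parts (i) and (iii) follow the paper. In (i) you reprove \cite[Lemma 3.3]{Chen-Weth} by hand via $\mathcal{E}_L(u^+,u^-)=-c_N\intr u^+(x)u^-(y)|x-y|^{-N}\,dx\,dy\le 0$. In (iii) you test with $v^+$, drop $-\mathcal{E}_L(v^-,v^+)\ge 0$, and combine Lemma \ref{est:lower-order-term} on $\Omega^+_v$ with the O'Neil--H\"older estimate, exactly as the paper does.

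The genuine gap is in (ii). Your chord bound $g(t):=\mathcal{E}_L(\sigma_t,\sigma_t)\le(1-t)\lambda_1+t\lambda$ is correct. However, the only lower bound you have is $g(t)\ge\lambda_1$, from $\sigma_t\in M$, and together these give only $\lambda\ge\lambda_1$, which is no contradiction. ``Running it with $\lambda_1$ and $\lambda$ interchanged'' adds nothing: the family $\sigma_t$ and the chord estimate are invariant under $u_1\leftrightarrow v$, $t\leftrightarrow 1-t$. So no reverse inequality arises, and hence no equality case or proportionality either. The missing idea is to use the equation of the non-principal eigenfunction $v$ itself. In the paper's notation ($u$ is the non-principal eigenfunction, $v_t=te_1^2+(1-t)u^2$), convexity is used a second time, through the tangent line at the endpoint carrying the non-principal eigenfunction. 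In your notation this reads $g(t)\ge g(1)+g'(1)(t-1)$, where
\[ g'(1)=\mathcal{E}_L\bigl(v,\tfrac{v^2-u_1^2}{v}\bigr)=\lambda\into\omega(v^2-u_1^2)\,dx=\lambda\bigl(J(v)-J(u_1)\bigr)=0. \]
Hence $g\ge\lambda$ on $[0,1]$. Combined with the chord bound this gives $(1-t)(\lambda-\lambda_1)\le 0$, i.e.\ $\lambda\le\lambda_1$ directly, with no equality analysis. Equivalently, you could use a Picone inequality $\mathcal{E}_L(v,u_1^2/v)\le\mathcal{E}_L(u_1,u_1)$ in $v$'s equation; its $j$-part holds after symmetrizing and applying AM--GM.

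This step is also where $\omega\in L^\infty(\Omega)$ genuinely enters, and it is needed for more than boundedness of eigenfunctions. For $(v^2-u_1^2)/v$ to be an admissible test function (equivalently, for $t\mapsto\sigma_t$ to be differentiable in $\h(\Omega)$ at $t=1$), you need $v>0$ in $\Omega$ and the two-sided comparison $u_1/v,\ v/u_1\in L^\infty(\Omega)$ up to $\partial\Omega$. The paper takes these from the strong maximum principle \cite{Jarohs-Weth}, the $L^\infty$ bound \cite{Dyda-Jarohs-Sk}, and the boundary behaviour results \cite[Theorem 1.11]{Chen-Weth} and \cite[Corollary 5.3]{Santamaria-Rios-Saldana}. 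By contrast, $\sigma_t\in\h(\Omega)$, which you single out as the main obstacle, is immediate from $|\sigma_t(x)-\sigma_t(y)|^2\le(1-t)|u_1(x)-u_1(y)|^2+t|v(x)-v(y)|^2$.

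Finally, your aside that $J(v)\le 0$ yields a sign contradiction when testing with $v$ is unsupported, since $\mathcal{E}_L$ is indefinite. For $\lambda>\lambda_1$ (the case the paper treats), Theorem \ref{Hardy-inequ} and \eqref{assump w2} give $(\lambda-\lambda_0)J(v)\ge\alpha\|v\|_{L^2(\Omega)}^2>0$, so $J(v)>0$ automatically.
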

The proof relies on the geodesic convexity of the energy functional and the regularity properties of the eigenfunction which follows from the additional regularity on the weight function. Similar arguments along with the truncation techniques are applied for the fractional Laplacian problem in \cite{Franzina-Palatucci, Iannizzotto}, however in our case, the additional nonlocal term with no sign information creates new difficulties and prevents us to use the truncation arguments, hence additional regularity of weight is needed. The assumption \eqref{assump w1} also helps in deriving the nodal domain inequality relating the eigenvalues $|\lambda_k|$ for $k >1$ and the measure of positive and negative parts of corresponding eigenfunctions. Due to the contrasting feature of the Logarithmic Laplacian, the eigenvalue may become non-positive, the nodal domain inequality is established for $|\lambda_k|$ instead of $\lambda_k.$ 

Next, we derive the qualitative and monotonicity properties of the eigenvalues. 
\begin{theorem}
  Assume \eqref{assump w0}-\eqref{assump w2}  hold true. Then the following  hold true:
    \begin{enumerate}
        \item[\textnormal{(i)}] The first eigenvalue $\lambda_1$  is simple. Moreover, if $\omega \in L^\infty(\Omega)$ then $\lambda_1$ is also isolated.
        \item[\textnormal{(ii)}] $\lambda_2=\min\{\lambda>\lambda_1:\lambda \text{ is an eigenvalue of } \eqref{equ}\}.$
        \item[\textnormal{(iv)}] Let $\omega\in L^\infty(\Omega).$ Then, 
            \[\lambda_2:=\inf_{A\in F_2} \sup_{u \in A} \mathcal{E}_L(u,u)=\inf_{f\in \mathcal{C}} \max_{\delta\in \mathbb{S}^1} \mathcal{E}_L(f(\delta),f(\delta)),\]
    where $\mathcal{C}$ denotes the set of all odd continuous maps from $\mathbb{S}^1$ to the constrained manifold $\{u \in \h(\Omega)\,:\, \int_\Omega \omega(x) u^2 ~dx=1\}.$
    \end{enumerate}
\end{theorem}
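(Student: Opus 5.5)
We treat the three items in order, taking as given the previous two theorems: $\lambda_1=\lambda_1^R$ is attained on $M$, first eigenfunctions have constant sign, and eigenfunctions for $\lambda\neq\lambda_1$ change sign when $\omega\in L^\infty(\Omega)$.

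\textbf{Simplicity of $\lambda_1$.} Let $u,v\in M$ be two first eigenfunctions. By the previous theorem each has constant sign, so we may take $u,v\geq 0$. If they are not proportional, the geodesic-convexity (hidden convexity) argument applies: set $w_t=((1-t)u^2+tv^2)^{1/2}$. Then $J(w_t)=1$. For the kernel part $\mathcal E$, the pointwise inequality $|w_t(x)-w_t(y)|^2\le (1-t)|u(x)-u(y)|^2+t|v(x)-v(y)|^2$ holds, with strict inequality unless $u/v$ is constant on the relevant pairs. For the $j$-term we need $w_t(x)w_t(y)\ge (1-t)u(x)u(y)+tv(x)v(y)$, which is Cauchy--Schwarz for the nonnegative vectors $(\sqrt{1-t}\,u,\sqrt t\, v)$. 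The sign of this term is favourable because it enters with a minus sign. The $\rho_N$ term is linear in $u^2,v^2$. Hence $I(w_t)<(1-t)I(u)+tI(v)=\lambda_1$, contradicting minimality. Alternatively, $u-cv$ for suitable $c$ is a first eigenfunction with $\int\omega u(u-cv)=0$, and one can check that it changes sign.

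\textbf{Isolation of $\lambda_1$ (for $\omega\in L^\infty$).} Suppose eigenvalues $\mu_n\downarrow\lambda_1$ with $\mu_n\neq\lambda_1$ and eigenfunctions $u_n\in M$. Assumption \eqref{assump w2} gives boundedness of $I$-bounded sequences in $\h(\Omega)$, so $u_n$ is bounded there. By compactness (Theorem \ref{cpct-embed}), $u_n\to u$ in $L^2(\Omega)$, and $u$ is a first eigenfunction of constant sign, say $u>0$. Each $u_n$ changes sign, so the nodal inequality in part (iii) of the previous theorem, with $\vartheta_1=\infty$-type and $\omega\in L^\infty$, gives $|\Omega_n^-|\ge c>0$ uniformly. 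This contradicts $u_n\to u>0$ in measure via Egorov's theorem. \textbf{This is the main obstacle}: the nodal estimate carries the factor $1-\max\{c_N|\Omega^-|-\rho_N,0\}S_2$, which must stay positive. I would first try to handle this by noting that $|\Omega^-_n|\to0$ along the contradiction sequence, so that factor tends to $1$ and the lower bound on $|\lambda|$ blows up, contradicting $\mu_n\to\lambda_1$.

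\textbf{Characterization of $\lambda_2$.} Since $\lambda_1$ is simple and isolated, $\lambda_2>\lambda_1$: otherwise a genus-2 minimax set at level $\lambda_1$ would contain a critical set of genus $\ge2$, which is impossible because the critical set at that level is $\{\pm u_1\}$. If $\lambda_1<\lambda<\lambda_2$ were an eigenvalue with eigenfunction $v$, then $v^\pm\neq0$. The set $A=\{(av^+-bv^-)/J(\cdot)^{1/2}\}$ forms a circle of genus 2. Testing the equation against $v^\pm$ and using $\mathcal E_L(v^+,v^-)\ge0$, or the relevant sign from the cross terms $-v^+(x)v^-(y)$, gives $I\le\lambda$ on $A$, so $\lambda_2\le\lambda$. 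Hence $\lambda_2$ is the least eigenvalue above $\lambda_1$. Finally, every $f\in\mathcal C$ has image of genus $\ge2$, which gives $\ge$ in the $\mathbb S^1$ formula. Conversely, the deformation Lemma \ref{def-lem} applied to near-optimal genus-2 sets, together with the previous construction of circles from sign-changing eigenfunctions, gives $\le$.
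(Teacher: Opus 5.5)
Your simplicity and isolation arguments are essentially the paper's. The paper takes $t=\tfrac12$, $z=\bigl(\tfrac{u^2+v^2}{2}\bigr)^{1/2}$, and uses the same two Cauchy--Schwarz inequalities. For isolation it also passes to a limit $u=\pm e_1$, notes that $|\Omega_n^+|$ or $|\Omega_n^-|$ tends to $0$, and lets the bound of Theorem~\ref{nodal estimate} blow up. Drop your alternative via $u-cv$: with an indefinite $\omega$, $\int_\Omega\omega\,u(u-cv)\,dx=0$ does not force $u-cv$ to change sign.

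Your route genuinely differs in (ii) and (iv).
\begin{itemize}
\item For $\lambda_1<\lambda_2$ you use the Lusternik--Schnirelman multiplicity principle: $\lambda_1=\lambda_2$ would give $\gamma(K_{\lambda_1})\ge2$, whereas $K_{\lambda_1}=\{\pm e_1\}$, since a critical point of $I|_M$ at level $\lambda_1$ has multiplier $\lambda_1$ and $\lambda_1$ is simple. This is correct given Lemma~\ref{ps-lem}. However, it needs an equivariant deformation lemma pushing $\{I\le c+\varepsilon\}\setminus N$ into $\{I\le c-\varepsilon\}$ for a neighbourhood $N$ of $K_c$, which is more than the regular-value Lemma~\ref{def-lem} the paper records.
\item The paper stays elementary. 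By compactness and simplicity, for large $n$ a near-optimal $A_n\in F_2$ splits into two disjoint relatively open pieces near $e_1$ and $-e_1$. This gives an odd continuous map $A_n\to\{\pm1\}$, so $\gamma(A_n)=1$.
\item Neither route uses isolation, so do not invoke it; it would import $\omega\in L^\infty(\Omega)$.
\item For minimality and for (iv), the paper also uses the circle built from a sign-changing eigenfunction. Applying it directly to a $\lambda_2$-eigenfunction, which exists by Theorem~\ref{exist-eigenval-L}, gives $\tilde\lambda_2\le\lambda_2$ at once. So neither your deformation step nor the paper's external result that $\tilde\lambda_2$ is an eigenvalue is needed.
\end{itemize}

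Several points must be fixed.

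\textbf{Sign in the circle construction.} The sign you state is backwards. For $w=av^+-bv^-$, testing the equation with $v^\pm$ gives
\[
\mathcal{E}_L(w,w)=\lambda\int_\Omega\omega\,w^2\,dx+(a-b)^2\,\mathcal{E}_L(v^+,v^-),\qquad \mathcal{E}_L(v^+,v^-)=-c_N\iint_{\R^N\times\R^N}\frac{v^+(x)v^-(y)}{|x-y|^N}\,dx\,dy\le0.
\]
So it is $\mathcal{E}_L(v^+,v^-)\le0$ that yields $I\le\lambda$ on $A$; with $\ge0$ the inequality would go the wrong way.

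\textbf{Normalization.} Before normalizing you need $\int_\Omega\omega(v^\pm)^2\,dx>0$, which is not automatic for indefinite $\omega$. The same identities give $\mathcal{E}_L(v^\pm,v^\pm)\le\lambda\int_\Omega\omega(v^\pm)^2\,dx$. Theorem~\ref{Hardy-inequ} with \eqref{assump w2} then gives $(\lambda-\lambda_0)\int_\Omega\omega(v^\pm)^2\,dx\ge\alpha\|v^\pm\|_{L^2(\Omega)}^2>0$, and $\lambda>\lambda_1\ge\lambda_0$.

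\textbf{Hidden assumption in (ii).} Your minimality argument in (ii) uses Theorem~\ref{sign-changing}, hence $\omega\in L^\infty(\Omega)$. The paper is in the same position: its proof of (ii) under \eqref{assump w0}--\eqref{assump w2} alone only shows $\lambda_1<\lambda_2$.

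\textbf{The factor in the isolation step.} The factor $1-\max\{c_N|\Omega_n^-|-\rho_N,0\}S_2$ tends to $1-\max\{-\rho_N,0\}S_2$. This equals $1$ only if $\rho_N\ge0$, i.e.\ $N\ge2$. For $N=1$, $\rho_1=-2\gamma<0$, and the argument needs $2\gamma S_2<1$. The paper has the same defect: its final display asserts $\max\{-\rho_N,0\}S_2<0$, which is false. A remedy is to replace $S_2$ by the Poincar\'e constant for functions vanishing outside $E=\Omega_n^-$. For such $w$ and $|E|<|B_1|$ one has $\mathcal{E}(w,w)\ge\frac{2}{N}\ln\frac{|B_1|}{|E|}\,\|w\|_{L^2}^2$, which makes the $\rho_N$-term negligible as $|E|\to0$.
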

\begin{theorem}
Assume \eqref{assump w0}-\eqref{assump w2}  hold true. Then the following eigenvalue monotonicity properties hold:
    \begin{enumerate}
        \item[\textnormal{(i)}] Let $\omega_1(x)\leq \omega_2(x)$ a.e. in $\Omega$.
Then for every $k\geq1$, we have
\[
\lambda_k(\omega_2)\leq \lambda_k(\omega_1).
\]
\item[\textnormal{(ii)}] Let $\omega_1, \omega_2 \in L^\infty(\Omega)$. Then,
\[
\lambda_1(\omega_1) > \lambda_1(\omega_2) \quad \text{if} \quad \omega_1 \leq \omega_2 \ \text{in} \ \Omega \ \text{and} \ \omega_1 \not\equiv \omega_2,
\]
and
\[
\lambda_2(\omega_1) > \lambda_2(\omega_2) \quad \text{if} \quad \omega_1 < \omega_2 \ \text{in} \ \Omega.
\]
        \item[\textnormal{(iii)}] Let $\Omega_1 \subsetneq \Omega_2\subseteq \R^N$ be bounded domains. Then,
\[
\lambda_k(\Omega_2) \leq \lambda_k(\Omega_1)
\quad \text{for all } k\geq 1 .
\]
Moreover, if $w \in L^\infty(\Omega)$, then $\lambda_1(\Omega_2) < \lambda_1(\Omega_1).$
    \end{enumerate}
\end{theorem}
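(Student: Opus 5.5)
The statement to prove is the last theorem, on monotonicity. My plan is to compare the admissible classes $F_k$ for the two problems and then use the minimax definition \eqref{lambda} together with the genus properties in Lemma \ref{genus-prop}.

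\textbf{Step 1 (part (i)).} Let $\omega_1\le\omega_2$ and write $J_i(u)=\into\omega_i u^2\,dx$ and $M_i=\{J_i=1\}$. Take $A\in F_k(\omega_1)$; then $J_2(u)\ge J_1(u)=1$ on $A$. Define $h(u)=u/\sqrt{J_2(u)}$. It is odd and continuous, it maps $A$ into $M_2$, and it preserves genus: $\gamma(h(A))\ge\gamma(A)$ by Lemma \ref{genus-prop}(ii), since $u\mapsto u/\sqrt{J_1(u)}$ is an inverse on $h(A)$. Because $I$ is $2$-homogeneous,
\[
I(h(u))=\frac{I(u)}{J_2(u)}.
\]
If $I(u)\ge 0$, this gives $I(h(u))\le I(u)$ because $J_2(u)\ge 1$.

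The delicate case is $I(u)<0$, where dividing by $J_2\ge 1$ moves the value toward $0$, i.e.\ upward. I would split into two cases.
\begin{itemize}
\item If $\lambda_k(\omega_1)\ge 0$, then for any $\varepsilon>0$ we can pick $A$ with $\sup_A I\le\lambda_k(\omega_1)+\varepsilon$. On $A$ we have $I(h(u))\le\max\{I(u),0\}\le\lambda_k(\omega_1)+\varepsilon$.
\item If $\lambda_k(\omega_1)<0$, I would use the Rayleigh-type reformulation $\lambda_k(\omega)=\inf_{A}\sup_{u\in A}I(u)/J_\omega(u)$ over symmetric sets of genus $\ge k$ in $\{J_\omega>0\}$. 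Sets with negative supremum lie inside $\{I<0\}$. For $u$ there, $J_2(u)\ge J_1(u)>0$ gives $I(u)/J_2(u)\ge I(u)/J_1(u)$, which points the wrong way.
\end{itemize}
So the negative regime genuinely needs care. I expect the correct statement to hold only where the eigenvalues are nonnegative, or else with $\lambda_0>0$ forcing $\lambda_k\ge\lambda_0>0$. I would check whether \eqref{assump w2} with $\lambda_0>0$ gives $I\ge\lambda_0 J$, and then $\lambda_k\ge\lambda_0>0$. If so, only the first case occurs and the argument closes. This is the main obstacle.

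\textbf{Step 2 (part (ii)).} For $\lambda_1$, take a positive eigenfunction $\phi_2$ for $\omega_2$ normalized in $M_2$; it is simple by the preceding theorems. I would first confirm that $J_1(\phi_2)>0$; the case $J_1(\phi_2)\le 0$ has to be handled separately, since then $\phi_2$ is not even admissible for $\omega_1$. When $J_1(\phi_2)>0$, the plan is to use
\[
\lambda_1(\omega_1)\le \frac{I(\phi_2)}{J_1(\phi_2)} = \frac{\lambda_1(\omega_2)}{J_1(\phi_2)}.
\]
Since $\phi_2>0$ a.e.\ (strong sign property) and $\omega_1\not\equiv\omega_2$, we get $J_1(\phi_2)<1$. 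This only yields an upper bound on $\lambda_1(\omega_1)$, so it points the wrong way. The right order is to start instead from the minimizer $\phi_1$ of $\omega_1$. Then $J_2(\phi_1)>1$, and
\[
\lambda_1(\omega_2)\le \frac{I(\phi_1)}{J_2(\phi_1)}<I(\phi_1)=\lambda_1(\omega_1)
\]
when $\lambda_1(\omega_1)>0$. For $\lambda_2$, I would use the characterization in the preceding theorem (iv) via odd maps $\mathbb S^1\to M$. Take a near-optimal path $f$ for $\omega_1$ and rescale it into $M_2$. Compactness of $f(\mathbb S^1)$ and $\omega_1<\omega_2$ give $\min_{\mathbb S^1} J_2(f)\ge 1+\delta$ with $\delta$ uniform. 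This yields a strict gap, provided positivity as above.

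\textbf{Step 3 (part (iii)).} Extension by zero embeds $\h(\Omega_1)$ into $\h(\Omega_2)$, with the weight on $\Omega_2$ restricting to that on $\Omega_1$. Both $I$ and $J$ are preserved, because all the integrals are over $\R^N$ with $u=0$ off $\Omega_1$. Hence $F_k(\Omega_1)\subset F_k(\Omega_2)$, and $\lambda_k(\Omega_2)\le\lambda_k(\Omega_1)$ follows directly. For strictness of $\lambda_1$, suppose equality holds. Then the extended first eigenfunction of $\Omega_1$ minimizes on $\Omega_2$, so it is a first eigenfunction there. By simplicity and constant sign it must then be nonzero a.e.\ in $\Omega_2$. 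This contradicts its vanishing on $\Omega_2\setminus\Omega_1$, which has positive measure for domains. Justifying that positivity a.e.\ uses $\omega\in L^\infty$ through the regularity and strong sign results of the previous section.
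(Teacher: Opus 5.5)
\textbf{Step 1: the obstacle is a genuine counterexample.} Your diagnosis in Step 1 is correct, and the obstacle cannot be removed. Without a sign condition, (i) and both strict inequalities in (ii) are false. The paper's proof breaks at exactly the step you flagged. It asserts $\mathcal{E}_L(u,u)/J_2(u)\le\mathcal{E}_L(u,u)$ for $u\in M_{\omega_1}$, and later $\lambda_1(\omega_1)/J_2(e_{1,\omega_1})<\lambda_1(\omega_1)$ and $\lambda_2(\omega_1)/m<\lambda_2(\omega_1)$. Each of these needs nonnegativity.

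For a counterexample take $\Omega=B_R(0)$, $\omega_1\equiv1$, $\omega_2\equiv2$.
\begin{itemize}
\item Both weights are bounded, so \eqref{assump w0}--\eqref{assump w2} hold with a sufficiently negative $\lambda_0$, as in the first example of the paper's Remark on admissible weights.
\item The odd homeomorphism $u\mapsto\sqrt2\,u$ maps $M_{\omega_2}$ onto $M_{\omega_1}$ and doubles $I$, so $\lambda_k(\omega_2)=\frac12\lambda_k(\omega_1)$.
\item The $\lambda_k(\omega_1)$ are the Dirichlet eigenvalues of $L_\Delta$ on $B_R$. Since $L_\Delta$ has symbol $2\ln|\xi|$, they equal $\lambda_k(B_1)-2\ln R$.
\item For $R$ large, $\lambda_1(\omega_1)$ and $\lambda_2(\omega_1)$ are negative. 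Hence $\lambda_k(\omega_2)>\lambda_k(\omega_1)$ for $k=1,2$, even though $\omega_1<\omega_2$ everywhere.
\end{itemize}

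Your hoped-for rescue via $\lambda_0>0$ is not implied by the hypotheses. Assumption \eqref{assump w2} only supplies some $\lambda_0\neq0$. In this example every admissible $\lambda_0$ is negative, because Lemma \ref{energy bdd below} gives $\lambda_0\le\lambda_1(\omega_1)<0$.

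So a hypothesis must be added: $\lambda_k(\omega_1)\ge0$ in (i), and $\lambda_1(\omega_1)>0$, respectively $\lambda_2(\omega_1)>0$, in (ii). The same example with $R$ chosen so that $\lambda_1(B_R)=0$ shows that strictness really fails at $0$.

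Under these hypotheses, your Case 1 estimate $I(h(u))\le\max\{I(u),0\}$ proves (i). It is slightly better than the paper's argument, which needs $I\ge0$ on all of $A$. Your $\lambda_1$-argument in (ii) starts from $\phi_1>0$, so that $J_2(\phi_1)>1$; this is exactly the paper's argument.

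\textbf{Step 2: the strict gap for $\lambda_2$ is missing.} Compactness of a single $f(\mathbb S^1)$ gives $\min_{\theta\in\mathbb S^1}J_2(f(\theta))\ge1+\delta_f$, but $\delta_f$ depends on $f$. Take near-optimal paths $f_n$ with $\max_{\theta\in\mathbb S^1}I(f_n(\theta))\le\lambda_2(\omega_1)+\frac1n$. Nothing prevents $\delta_{f_n}\to0$, and then you only get $\lambda_2(\omega_2)\le\lambda_2(\omega_1)$.

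The paper obtains a uniform constant from $m:=\inf\{J_2(u):\ u\in M_{\omega_1},\ I(u)\le\lambda\}>1$. The argument runs as follows.
\begin{itemize}
\item The sublevel set is bounded in $\h(\Omega)$ by Theorem \ref{Hardy-inequ} and \eqref{assump w2}.
\item By Theorem \ref{cpct-embed}, weak lower semicontinuity and Lemma \ref{limit passage}, the infimum is attained at some $u$ with $J_1(u)=1$.
\item Since $\omega_1<\omega_2$ and $u\neq0$, this gives $J_2(u)>1$.
\end{itemize}
Consequently $\lambda_2(\omega_2)\le(\lambda_2(\omega_1)+\frac1n)/m$ for every $n$.

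\textbf{Step 3: part (iii) is fine.} Part (iii) involves no rescaling, hence no sign issue, and your argument is the paper's. Your inclusion $F_k(\Omega_1)\subset F_k(\Omega_2)$ is the correct direction; the paper writes it reversed, which would give the opposite inequality.

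For strictness, positivity of a first eigenfunction on $\Omega_2$ comes from the strong maximum principle used in Section \ref{sign properties} for $\omega\in L^\infty$, not from simplicity; you indicate this at the end. Also, $|\Omega_2\setminus\Omega_1|>0$ is not automatic for domains (think of a slit ball). It relies on the standing Lipschitz assumption.
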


\section{Existence of eigenvalues}
\label{sec-existence}
In this section, we first present the Hardy inequality associated with our operator $L_\Delta$, which plays a fundamental role in the analysis of the weighted eigenvalue problem \eqref{equ}. For completeness, a detailed proof of this result-also referred to as Pitt’s inequality (see \cite[Theorem 1]{Beckner})-is provided in Appendix \ref{Appendix}. 

\begin{theorem} 
    \label{Hardy-inequ}
    For any $u \in \h(\Omega)$, it holds that
    \[\int_{\R^N} u^2 \ln\l(\frac{1}{|x|}\r) ~dx+\l(\ln2+\Psi(\frac{N}{4})\r)\int_{\R^N} u^2 ~dx\leq\frac{\mathcal{E}_L(u,u)}{2}.\]
\end{theorem}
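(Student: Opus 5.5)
The plan is to work on the Fourier side, where $L_\Delta$ is the multiplier $2\ln|\xi|$. For $u\in C_c^\infty(\Omega)$ one has (Chen--Weth)
\[
\mathcal{E}_L(u,u)=\int_{\R^N} 2\ln|\xi|\,|\hat u(\xi)|^2\,d\xi ,
\]
with the unitary normalization of the Fourier transform. Since $C_c^\infty(\Omega)$ is dense in $\h(\Omega)$ for Lipschitz $\Omega$, and both sides of the inequality are continuous along such approximations, it is enough to prove the claim for smooth $u$. The continuity of the left-hand side is the one point that needs care here, because $\ln(1/|x|)$ is unbounded near the origin. It follows from a Fatou-type argument once the inequality is known for smooth functions, or alternatively from the Orlicz embedding $\h(\Omega)\hookrightarrow L^\ph(\Omega)$ with $\ph(t)=t^2\ln(e+t)$ in Theorem \ref{cpct-embed}. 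In that form the statement reads
\[
\int_{\R^N}\ln\frac{1}{|x|}\,u^2\,dx+\Big(\ln 2+\Psi\big(\tfrac N4\big)\Big)\|u\|_2^2\le\int_{\R^N}\ln|\xi|\,|\hat u|^2\,d\xi,
\]
which is exactly Beckner's logarithmic Pitt inequality. I would derive it by differentiating the fractional Pitt inequality at $s=0$.

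The fractional Pitt (Stein--Weiss) inequality states that for $0<s<N/2$,
\[
\int_{\R^N}|x|^{-2s}u^2\,dx\le C_s\int_{\R^N}|\xi|^{2s}|\hat u|^2\,d\xi,\qquad C_s=2^{-2s}\left(\frac{\Gamma(\frac{N-2s}{4})}{\Gamma(\frac{N+2s}{4})}\right)^2,
\]
and $C_0=1$. I take this sharp constant as given; its standard proof diagonalizes the Riesz-potential operator $|x|^{-s}(-\Delta)^{-s/2}$ via the Mellin transform and spherical harmonics, and checks that the norm is attained on the radial mode. At $s=0$ both sides equal $\|u\|_2^2$ by Plancherel, so define
\[
F(s)=C_s\int|\xi|^{2s}|\hat u|^2\,d\xi-\int|x|^{-2s}u^2\,dx .
\]
Then $F(0)=0$ and $F(s)\ge0$ for small $s>0$, which gives $F'(0^+)\ge0$.

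The differentiation is justified by dominated convergence. The function $\hat u$ is Schwartz, and $u$ is bounded with compact support, while $|x|^{-2s}|\ln|x||$ is integrable near the origin for small $s$. This gives
\[
F'(0)=\frac{d}{ds}C_s\Big|_{s=0}\|u\|_2^2+2\int\ln|\xi|\,|\hat u|^2\,d\xi-2\int\ln\frac{1}{|x|}\,u^2\,dx .
\]
Next compute the derivative of the constant:
\[
\frac{d}{ds}\ln C_s\Big|_{0}=-2\ln2+2\cdot\Big(-\tfrac12\Psi\big(\tfrac N4\big)-\tfrac12\Psi\big(\tfrac N4\big)\Big)=-2\ln2-2\Psi\big(\tfrac N4\big).
\]
Substituting into $F'(0)\ge0$ and dividing by 2 yields
\[
\int\ln\frac1{|x|}\,u^2+\Big(\ln2+\Psi\big(\tfrac N4\big)\Big)\|u\|_2^2\le\int\ln|\xi|\,|\hat u|^2=\frac{\mathcal{E}_L(u,u)}{2},
\]
which is the claimed inequality.

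The main obstacle is the sharp fractional Pitt inequality with the exact constant $C_s$, since only its first-order behaviour at $s=0$ matters and any non-sharp constant would spoil the $\Psi(N/4)$ term. If quoting it is not acceptable, I would instead prove the logarithmic version directly, in the following steps:
\begin{itemize}
\item Pass to logarithmic polar coordinates and decompose $u$ into spherical harmonics.
\item Observe that the quadratic form $\int\ln|\xi|\,|\hat u|^2-\int\ln(1/|x|)\,u^2$ then becomes a Mellin multiplier in each angular sector.
\item Show that this multiplier is bounded below by $\ln2+\Psi(N/4)$, with the minimum attained in the radial sector at zero Mellin frequency. This reduces to monotonicity and convexity properties of the digamma function.
\end{itemize}
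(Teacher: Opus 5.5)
Your proposal is correct and follows essentially the paper's argument: both differentiate the sharp fractional Hardy--Pitt inequality at $s=0$, where its constant $2^{-2s}\Gamma^2(\frac{N-2s}{4})/\Gamma^2(\frac{N+2s}{4})$ equals $1$, and then pass to $\h(\Omega)$ by density. Working with the symbol $2\ln|\xi|$ lets you differentiate the right-hand side directly, instead of quoting an expansion lemma for the real-space form $\frac{c_{N,s}}{2}\iint\frac{|u(x)-u(y)|^2}{|x-y|^{N+2s}}\,dx\,dy$ (whose factor $\frac12$ the paper drops), and your Fatou argument on $\ln^+(1/|x|)$ makes rigorous the density step that the paper only asserts.
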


Next, we derive a key inequality that plays a central role in the analysis of integrals involving the sign-changing weight function $\omega$. 

\begin{lemma} \label{lem:norm-est}
Let $\eta$ satisfies \eqref{psi}. Then, for any $u \in L^\eta(\Omega)$, the following holds
\[
\|u^2\|_{L^\psi(\Omega)} \leq \|u\|_{L^\eta(\Omega)}^2.
\]
\end{lemma}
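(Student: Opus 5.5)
The inequality follows directly from the definition of the Luxemburg norm \eqref{orlicz-norm-def} together with the identity $\psi(t)=\eta(\sqrt{t})$ from \eqref{psi}. Set $\lambda:=\|u\|_{L^\eta(\Omega)}$. If $\lambda=0$ then $u=0$ a.e., so $u^2=0$ and both sides vanish. Assume $\lambda>0$. We will show that $\lambda^2$ is an admissible level in the infimum defining $\|u^2\|_{L^\psi(\Omega)}$, that is,
\[
\varrho_\psi\Big(\frac{u^2}{\lambda^2}\Big)\le 1 .
\]
The inequality then follows from the definition of the infimum.

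The key computation is pointwise. For a.e. $x\in\Omega$,
\[
\psi\Big(\frac{u(x)^2}{\lambda^2}\Big)=\eta\Big(\sqrt{u(x)^2/\lambda^2}\Big)=\eta\Big(\frac{|u(x)|}{\lambda}\Big).
\]
Integrating over $\Omega$ gives $\varrho_\psi(u^2/\lambda^2)=\varrho_\eta(u/\lambda)$. It therefore suffices to know that $\varrho_\eta(u/\|u\|_{L^\eta})\le1$.

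This last fact is the only step needing care, since the Luxemburg infimum need not a priori be attained. I would take $\lambda_n\downarrow\lambda$ with $\varrho_\eta(u/\lambda_n)\le1$, which exist by definition of the infimum. Because $\eta$ is increasing and left continuous (it is a convex $\Phi$-function by Definition \ref{def:phi-function}), we have $\eta(|u(x)|/\lambda_n)\uparrow \eta(|u(x)|/\lambda)$ for a.e. $x$. The monotone convergence theorem then yields $\varrho_\eta(u/\lambda)=\lim_n\varrho_\eta(u/\lambda_n)\le 1$.

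This is the standard unit-ball property of Luxemburg norms (cf. \cite{Harjulehto-Hasto}), and one could simply cite it. One could also observe that $\psi$ is itself a $\Phi$-prefunction for which the modular above makes sense, although convexity of $\psi$ is not needed for the inequality. Combining the steps gives $\|u^2\|_{L^\psi(\Omega)}\le\lambda^2=\|u\|_{L^\eta(\Omega)}^2$. The only potential obstacle is justifying the unit-ball property via left continuity, and the monotone convergence argument handles it.
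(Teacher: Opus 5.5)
Your proof is correct and follows the same route as the paper: you normalize by $\|u\|_{L^\eta(\Omega)}$, use the pointwise identity $\psi(v^2)=\eta(|v|)$ together with the unit-ball property $\varrho_\eta\bigl(u/\|u\|_{L^\eta(\Omega)}\bigr)\le 1$, and read off the bound from the Luxemburg infimum. The differences are minor: you prove the unit-ball property yourself via left continuity and monotone convergence, whereas the paper cites the unit-ball lemma of Harjulehto--H\"ast\"o, and you treat the case $u=0$ separately, which the paper tacitly skips.
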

\begin{proof}
Set
\[ v:= \frac{u}{\|u\|_{L^\eta(\Omega)}} \quad \text{such that} \quad \|v\|_{L^\eta(\Omega)} =1.
\]
By using the unit ball property of the Orlicz norm (see \cite[Lemma 3.2.3]{Harjulehto-Hasto}), we have
\[
\begin{split}
\int_{\Omega} \eta(v) ~dx \leq 1 \ \Longrightarrow \ \int_{\Omega} \psi(v^2) ~dx \leq 1 \ \Longrightarrow \ \|v^2\|_{L^\psi(\Omega)} \leq 1.\ 
\end{split}
\]
Consequently, 
\[
\|u^2\|_{L^\psi(\Omega)} \leq \|u\|_{L^\eta(\Omega)}^2.
\]
\end{proof}
Next, by combining the above result with the optimal compact embeddings established in \cite[Theorem 2.2]{Arora-Giacomoni-Vaishnavi}, we derive convergence estimates for weighted integrals, which are fundamental to the development of the associated variational framework.
\begin{lemma}
    \label{limit passage}
    Assume \eqref{assump w1} holds true. Let $(u_n)_{n\in \N}\subseteq L^\eta(\Omega)$ be such that $u_n\to u$ in $L^\eta(\Omega).$ Then, the following hold true
    \begin{enumerate}
        \item[\textnormal{(i)}] $\displaystyle \lim\limits_{n \to \infty} \into \omega(x) u_n^2 ~dx= \into \omega(x) u^2 ~dx \quad \text{and} \quad \lim\limits_{ n \to \infty} \into \omega(x) (u_n-u)^2 ~dx= 0.$\\
        \item[\textnormal{(ii)}] $\displaystyle \lim\limits_{n \to \infty} \into \omega(x) u_n v~dx= \into \omega(x) uv ~dx \quad \text{for every } v \in L^\eta(\Omega).$
    \end{enumerate}
\end{lemma}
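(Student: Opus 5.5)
The plan is to use the generalized H\"older inequality in Orlicz spaces for the Young pair $(\psi,\psi^\ast)$, together with Lemma \ref{lem:norm-est}, to control all the weighted integrals by $\|\omega\|_{L^{\psi^\ast}(\Omega)}$ times $L^\eta$-norms. Concretely, for $f\in L^\psi(\Omega)$ we have
\[
\Big|\into \omega f\,dx\Big|\le 2\|\omega\|_{L^{\psi^\ast}(\Omega)}\|f\|_{L^\psi(\Omega)},
\]
where the factor $2$ comes from the Luxemburg-norm H\"older inequality (\cite[Lemma 3.2.11]{Harjulehto-Hasto}). This bound is finite by \eqref{assump w1}. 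Applying it with $f=(u_n-u)^2$ and then using Lemma \ref{lem:norm-est} gives
\[
\Big|\into \omega (u_n-u)^2\,dx\Big|\le 2\|\omega\|_{L^{\psi^\ast}(\Omega)}\|u_n-u\|_{L^\eta(\Omega)}^2\to0 .
\]
This proves the second limit in (i).

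For the bilinear terms I would not try to split $uv$ directly in $L^\psi$, since $\psi$ is defined through squares. Instead I would use polarization, $uv=\tfrac14\big((u+v)^2-(u-v)^2\big)$, which gives
\[
\into\omega uv\,dx=\tfrac14\big(J(u+v)-J(u-v)\big).
\]
Here $J$ is extended to $L^\eta(\Omega)$; it is well defined there, since $|J(w)|\le 2\|\omega\|_{L^{\psi^\ast}}\|w\|_{L^\eta}^2$ by the bound above. Together with symmetry and bilinearity this yields
\[
\Big|\into\omega uv\,dx\Big|\le \tfrac12\|\omega\|_{L^{\psi^\ast}}\big(\|u+v\|_{L^\eta}^2+\|u-v\|_{L^\eta}^2\big).
\]
Rescaling $u\mapsto tu$, $v\mapsto v/t$ and optimizing over $t>0$ then gives a genuine Cauchy--Schwarz type bound
\[
\Big|\into\omega uv\,dx\Big|\le C\|\omega\|_{L^{\psi^\ast}(\Omega)}\|u\|_{L^\eta(\Omega)}\|v\|_{L^\eta(\Omega)}.
\]
The optimization works because the right-hand side is at most $\|\omega\|(\|tu\|^2+\|v/t\|^2)$, by the triangle inequality and $(a+b)^2+(a-b)^2\le 2(a^2+b^2)$ applied to norms.

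Part (ii) follows at once: $\into\omega(u_n-u)v\,dx$ is bounded by $C\|\omega\|\,\|u_n-u\|_{L^\eta}\|v\|_{L^\eta}\to0$. For the first limit in (i), write
\[
\into\omega u_n^2-\into\omega u^2=\into\omega(u_n-u)^2+2\into\omega(u_n-u)u .
\]
Both terms tend to zero, by the second part of (i) and by (ii) with $v=u$.

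The only delicate point is the H\"older inequality in this generality and the well-definedness of $J$ on $L^\eta$. The latter is exactly what Lemma \ref{lem:norm-est} supplies, since $u\in L^\eta$ implies $u^2\in L^\psi$. The convexity of $\psi$ is not needed for the H\"older step, because it only uses Young's inequality $ts\le\psi(t)+\psi^\ast(s)$, which holds by the definition of $\psi^\ast$. So the main care is in normalizing by the Luxemburg norms before integrating Young's inequality. No density argument is required, which matches the paper's remark that smooth functions need not be dense in $L^{\psi^\ast}$.
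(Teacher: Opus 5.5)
Your proof is correct. It differs from the paper's in how you get the first limit in (i) and part (ii). The second limit in (i) is proved exactly as in the paper: Orlicz--H\"older with the factor $2$, then Lemma~\ref{lem:norm-est}.

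\textbf{First limit in (i).} The paper proves uniform integrability of $\{\omega u_n^2\}$ and invokes Vitali's theorem. That route uses convergence in measure and a smallness of $\|u_n\|_{L^\eta(A)}$ for $|A|$ small, uniform in $n$; this is an absolute-continuity property of the $L^\eta$-norm, which the paper simply asserts from strong convergence. You instead deduce the limit algebraically from the identity $\into\omega u_n^2\,dx-\into\omega u^2\,dx=\into\omega(u_n-u)^2\,dx+2\into\omega(u_n-u)u\,dx$. The whole lemma then rests on the single estimate $|J(w)|\le 2\|\omega\|_{L^{\psi^\ast}(\Omega)}\|w\|_{L^\eta(\Omega)}^2$, i.e.\ on continuity of the quadratic form $J$ on $L^\eta(\Omega)$. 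Your route needs neither Vitali nor absolute continuity of the norm.

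\textbf{Part (ii).} The paper uses the inequality $\into\omega fg\,dx\le\bigl(\into\omega f^2\,dx\bigr)^{1/2}\bigl(\into\omega g^2\,dx\bigr)^{1/2}$. As written this is not valid for an indefinite weight, since the factors under the roots can be negative; it must be read with $|\omega|$ in place of $\omega$. So corrected, it gives the bilinear bound in one line with constant $2$, because $\|\,|\omega|\,\|_{L^{\psi^\ast}(\Omega)}=\|\omega\|_{L^{\psi^\ast}(\Omega)}$. Your polarization-plus-rescaling argument needs no positivity and uses the triangle inequality only in the genuine normed space $L^\eta(\Omega)$. The price is a longer argument and a worse constant.

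\textbf{A small slip.} The inequality $\|u+v\|^2+\|u-v\|^2\le 2(\|u\|^2+\|v\|^2)$ is the parallelogram law. It does not follow from the triangle inequality and fails for general Luxemburg norms. The triangle inequality only gives $\|u\pm v\|\le\|u\|+\|v\|$. That yields the bound $2\|\omega\|_{L^{\psi^\ast}(\Omega)}(\|u\|^2+\|v\|^2)$ before rescaling, and $C=4$ after optimizing in $t$, which is all you need.
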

\begin{proof}
Let $\eps>0.$ By the strong convergence of $(u_n)_{n \in\N}$ in $L^\eta(\Omega)$, there exists a $\delta>0$ such that
\[
\|u_n\|_{L^\eta(A)}^2 < \frac{\eps}{2 \|\omega\|_{L^{\psi^\ast}(\Omega)}}
\]
for any $A \subset \Omega$ and $|A| < \delta.$
By applying H\"older's inequality and using Lemma \ref{lem:norm-est}, the above estimate further implies
\[
    \begin{split}
\int_{A} \omega(x) u_n^2 ~dx \leq 2 \|\omega\|_{L^{\psi^\ast}(A)}\|u_n^2\|_{L^\psi(A)}\leq 2 \|\omega\|_{L^{\psi^\ast}(A)}\|u_n\|_{L^\eta(A)}^2<\eps.
    \end{split}
    \] 
    Hence, $\{\omega u_n^2\}_{n \in \N}$ is uniformly integrable. By Vitali convergence theorem, we get the first claim in (i).  Again, by using H\"older's inequality, Lemma \ref{lem:norm-est} and $u_n \to u$ in $L^\eta(\Omega)$, we have 
    \[
    \begin{split}
       \l| \into \omega(x) (u_n-u)^2 ~dx\r| & \leq 2 \|\omega\|_{L^{\psi^\ast}(\Omega)} \|(u_n-u)^2\|_{L^\psi(\Omega)} \\
       & \leq 2 \|\omega\|_{L^{\psi^\ast}(\Omega)}\|u_n-u\|_{L^\eta(\Omega)}^2 \to 0 \quad \text{as} \ n \to \infty.
    \end{split}
    \] 
    This proves (i). To prove (ii), first note that by H\"older's inequality and Lemma \ref{lem:norm-est}, we have 
    \[
    \begin{split}
        \into \omega(x) (u_n-u) v ~dx & \leq \l(\into \omega(x) v^2 ~dx\r)^{\frac{1}{2}} \l(\into \omega(x) (u_n-u)^2 ~dx\r)^{\frac{1}{2}}\\
        & \leq 2 \|\omega\|_{L^{\psi^\ast}(\Omega)} \|u_n-u\|_{L^\eta(\Omega)} \|v\|_{L^\eta(\Omega)} \to 0 \ \text{as} \ n \to \infty.
    \end{split}
    \]
Hence, the claim in (ii).
\end{proof}

Next, in order to minimize the energy functional $I$ on the constrained manifold $M$ and to apply Lusternik-Schnirelman principle, we show that $I$ is bounded below on $M$ and satisfies the Palais-Smale condition at the min-max levels $\lambda_k$.
\begin{lemma}
  \label{energy bdd below} 
 Assume \eqref{assump w0} and \eqref{assump w2} hold true. Then, the functional $I(\cdot)$ is bounded below by $\lambda_0$ on the set $M$.
\end{lemma}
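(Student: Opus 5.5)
The plan is to combine the Hardy (Pitt-type) inequality of Theorem \ref{Hardy-inequ} with the pointwise lower bound in \eqref{assump w2}. The constants in \eqref{assump w2} are tailored so that these two fit together exactly, so the argument should be a short chain of inequalities rather than a compactness argument.

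Fix $u \in M$, so $u \in \h(\Omega)$, $u = 0$ in $\R^N\setminus\Omega$ and $J(u) = \into \omega u^2\,dx = 1$. Multiplying Theorem \ref{Hardy-inequ} by $2$ and noting that $2\ln\frac{1}{|x|} = \ln\frac{1}{|x|^2}$ and that the integrals over $\R^N$ reduce to integrals over $\Omega$, I would write
\[
I(u) = \mathcal{E}_L(u,u) \geq \into u^2\Big(\ln\Big(\frac{1}{|x|^2}\Big) + \ln 4 + 2\Psi\Big(\frac{N}{4}\Big)\Big)\,dx .
\]
Next I would use \eqref{assump w2} in the rearranged form
\[
\ln\Big(\frac{1}{|x|^2}\Big) + \ln 4 + 2\Psi\Big(\frac{N}{4}\Big) \geq \alpha + \lambda_0\,\omega(x) \quad \text{for a.e. } x\in\Omega.
\]
Multiplying by $u^2 \geq 0$ and integrating gives
\[
I(u) \geq \alpha \into u^2\,dx + \lambda_0 \into \omega u^2\,dx = \alpha\|u\|_{L^2(\Omega)}^2 + \lambda_0 \geq \lambda_0 ,
\]
since $\alpha > 0$ and $J(u) = 1$. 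In fact this is strict, because $u \not\equiv 0$ on $M$. Assumption \eqref{assump w0} enters only to guarantee that $M \neq \emptyset$, so that the statement is not vacuous.

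The only delicate point is justifying that the integrals above are finite, so that splitting $\into u^2(\alpha + \lambda_0\omega)\,dx$ into two pieces is legitimate. I would handle this as follows. The left-hand side in Theorem \ref{Hardy-inequ} makes sense, and the positive part of $u^2\ln(1/|x|)$ is integrable, by the theorem itself together with boundedness of $\Omega$: the function $\ln(1/|x|)$ is bounded below on $\Omega$, so its negative part times $u^2$ is integrable because $u \in L^2$. The product $\omega u^2$ is integrable because $J(u) = 1$ is finite by definition of $M$; under \eqref{assump w1} this also follows from the H\"older-type bound used in Lemma \ref{limit passage}. With these two facts, the pointwise inequality can be integrated without any issue of indeterminate forms.
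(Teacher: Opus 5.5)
Your proposal is correct and follows the paper's proof: multiply the Hardy (Pitt-type) inequality of Theorem \ref{Hardy-inequ} by $2$, use \eqref{assump w2} pointwise, and integrate against $u^2$ using $J(u)=1$. Your version is a bit more careful than the paper's. You keep the term $\alpha\|u\|_{L^2(\Omega)}^2$, which is also what gives strictness, and you justify the integrability needed to split the integral.
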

\begin{proof}
Note that by \eqref{assump w2}, we have
\[\ln\l(\frac{1}{|x|^2}\r)+\ln 4+2\psi(\frac{N}{4})-\lambda_0\,\omega(x)>0 \quad \text{a.e. $x$} \in \Omega.\]
Using this and Theorem \ref{Hardy-inequ}, we get
    \[
    \begin{split}
I(u) &\geq 2\int_{\R^N} u^2 \ln\l(\frac{1}{|x|}\r) ~dx+2\l(\ln2+\psi(\frac{N}{4})\r)\int_{\R^N} u^2 ~dx\\&>\lambda_0\into \omega(x) u^2 ~dx=\lambda_0.
    \end{split}
    \]
    This establishes the claim.
\end{proof}
\begin{remark}
    Using the definition of $\mathcal{E}_L(\cdot,\cdot)$ and Lemma \ref{est:lower-order-term}, it is easy to see that
 \begin{equation}
\label{E-L-bd-below-above}
\mathcal{E}(u,u)-(c_N|\Omega|-\rho_N)\|u\|_{L^2(\Omega)}^2 \leq \mathcal{E}_L(u,u) \leq \mathcal{E}(u,u) + (c_N|\Omega| + \rho_N)\|u\|_{L^2(\Omega)}^2 
 \end{equation}
 
for every $u \in \h(\Omega)$.
\end{remark}
\begin{lemma}
    \label{ps-lem}
     Assume \eqref{assump w0}-\eqref{assump w2} hold true. Then, $I|_{M}$ satisfies the Palais Smale condition at the level $\lambda_k.$
    \end{lemma}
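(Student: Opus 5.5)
Let $(u_n)\subseteq M$ be a Palais--Smale sequence for $I|_M$ at level $\lambda_k$. This means $I(u_n)\to\lambda_k$, $J(u_n)=1$, and there are Lagrange multipliers $\mu_n\in\R$ with
\[
\sup_{\|v\|_{\h(\Omega)}\le 1}\Big|\mathcal{E}_L(u_n,v)-\mu_n\into \omega u_n v\,dx\Big|\to 0 .
\]
Testing this with $v=u_n/\|u_n\|_{\h(\Omega)}$ and using $J(u_n)=1$ gives $\mu_n=I(u_n)+o(1)\,\|u_n\|_{\h(\Omega)}$. The first step, and I expect the main one, is to show that $(u_n)$ is bounded in $\h(\Omega)$. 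Once that is known, $\mu_n\to\lambda_k$.

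\textbf{Boundedness.} The plan is to exploit the strict positivity margin $\alpha$ in \eqref{assump w2}. Combining Theorem \ref{Hardy-inequ} with \eqref{assump w2} gives
\[
I(u_n)-\lambda_0 = I(u_n)-\lambda_0 J(u_n) \ge \alpha\|u_n\|_{L^2(\Omega)}^2 .
\]
Since $I(u_n)$ is bounded, $\|u_n\|_{L^2(\Omega)}$ is bounded. The lower bound in \eqref{E-L-bd-below-above} then yields
\[
\mathcal{E}(u_n,u_n)\le I(u_n)+(c_N|\Omega|-\rho_N)\|u_n\|_{L^2(\Omega)}^2\le C ,
\]
so $(u_n)$ is bounded in $\h(\Omega)$. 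The delicate point is the exact form of the Hardy inequality. One has to make sure that the factor $2$ in Theorem \ref{Hardy-inequ} matches the constants in \eqref{assump w2}, and that the $\alpha$ slack really survives (the proof of Lemma \ref{energy bdd below} discards it). If it does not, the fallback is a contradiction argument: normalize $w_n=u_n/\|u_n\|_{\h(\Omega)}$, use $J(w_n)\to0$ and the compactness below to reach a contradiction with the same inequality.

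\textbf{Convergence.} By reflexivity, pass to a subsequence with $u_n\rightharpoonup u$ in $\h(\Omega)$. The compact embedding $\h(\Omega)\hookrightarrow L^\eta(\Omega)$ of Theorem \ref{cpct-embed} and the compact embedding into $L^2(\Omega)$ give $u_n\to u$ strongly in $L^\eta(\Omega)$ and in $L^2(\Omega)$. Lemma \ref{limit passage}(i) yields $J(u)=\lim J(u_n)=1$, so $u\in M$.

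Now test the Palais--Smale relation with $v=u_n-u$, which is bounded in $\h(\Omega)$. This gives
\[
\mathcal{E}_L(u_n,u_n-u)=\mu_n\into\omega u_n(u_n-u)\,dx+o(1).
\]
The right side tends to zero: write $\into\omega u_n(u_n-u)\,dx=\into\omega(u_n-u)^2\,dx+\into\omega u(u_n-u)\,dx$ and apply Lemma \ref{limit passage}, using that $\mu_n$ is bounded. On the left, split $\mathcal{E}_L$ into its three pieces. The $j$-term and the $\rho_N$-term are continuous with respect to $L^2$ convergence by Lemma \ref{est:lower-order-term}, so they tend to zero. Hence
\[
\mathcal{E}(u_n,u_n-u)\to0 .
\]
Weak convergence also gives $\mathcal{E}(u,u_n-u)\to0$. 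Subtracting, $\mathcal{E}(u_n-u,u_n-u)\to0$, so $u_n\to u$ in $\h(\Omega)$, which is the Palais--Smale condition at level $\lambda_k$.
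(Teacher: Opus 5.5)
Your proof is correct and follows the paper's argument: boundedness from Theorem \ref{Hardy-inequ} combined with \eqref{assump w2} (the constants match exactly once the Hardy inequality is doubled, so the $\alpha$-slack survives and your fallback is unnecessary), then compactness, identification of the multiplier, and testing with $u_n-u$. The only difference is the last step: you obtain $\mathcal{E}(u_n-u,u_n-u)\to0$ directly by peeling off the $j$-term and the $\rho_N$-term via Lemma \ref{est:lower-order-term} and strong $L^2$ convergence, whereas the paper invokes the $(S)$-property of $I$ from an external reference.
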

\begin{proof}
Let $k \in \N$, $(u_n)_{n \in\N}\subseteq M$ and $(\delta_n)_{n \in \N}\subseteq\R$ be such that
\begin{equation}
\label{1-1}
I(u_n)=\mathcal{E}_L(u_n,u_n)\to \lambda_k  \geq  \lambda_0
\end{equation} 
and
\[I'(u_n)-\delta_n J'(u_n)\to 0 \quad \text{in }(\h(\Omega))^\ast \]
{\it i.e.} for any $v \in \h(\Omega)$,
\begin{equation}
\label{PS}
\mathcal{E}_L(u_n,v)-\delta_n\into \omega(x) u_nv~dx \to 0 \quad \text{as} \ n\to \infty
\end{equation}
where $(\h(\Omega))^\ast$ denotes the dual of $\h(\Omega).$
Existence of such a sequence follows from Lemma \ref{energy bdd below} and \cite[Lemma 3.3]{Ekeland}.
From \eqref{1-1}, we get, for $\epsilon>0$, there exists $n_0(\epsilon)\in \N$ such that
\[\mathcal{E}_L(u_n,u_n)\leq \lambda_k+\epsilon \quad \text{whenever } n\geq n_0(\epsilon). \]
Now, by using Theorem \ref{Hardy-inequ}, \eqref{assump w2} and the fact that $u_n \in M$, we obtain 
    \[
    \begin{split}
    \lambda_k+\epsilon \geq \mathcal{E}_L(u_n,u_n)&\geq\int_{\R^N} u_n^2 \ln\l(\frac{1}{|x|^2}\r) ~dx+\l(\ln4+2\psi(\frac{N}{4})\r)\int_{\R^N} u_n^2 ~dx\\&\geq \alpha\into u_n^2 ~dx+\lambda_0,
 \quad \text{whenever } n \geq n_0(\epsilon).    \end{split}
    \]
This implies
    \[\|u_n\|^2_{L^2(\Omega)}\leq \l(\frac{\lambda_k+\epsilon - \lambda_0}{\alpha}\r).\]
Next, in view of \eqref{E-L-bd-below-above}, above estimate further gives
\begin{equation}\label{est-bounded}
    \mathcal{E}(u_n,u_n) \leq  \lambda_k+\epsilon+  \max\{(c_N|\Omega|-\rho_N),0\}\l(\frac{\lambda_k+\epsilon-\lambda_0}{\alpha}\r).
\end{equation}
Hence, $(u_n)_{n \in \N}$ is bounded in $\h(\Omega)$. 
Therefore, up to a subsequence (keeping the same notation), by Theorem \ref{cpct-embed}, we get 
\[
u_n\rightharpoonup u \text{ in } \h(\Omega)
\quad \text{and}\quad
u_n\to u \text{ in } L^\eta(\Omega)
\]
for some $u\in \h(\Omega).$ Using Lemma \ref{limit passage} (i), we get
    \begin{equation}\label{1-2}1= \lim_{n \to \infty} \into \omega(x)  u_n^2 ~dx = \into \omega(x)  u^2 ~dx.\end{equation}
Therefore, $u\in M.$ 
Now, by taking $v=u_n$ in \eqref{PS} and using \eqref{1-1}, \eqref{est-bounded} and \eqref{1-2}, we obtain
\[
\delta_n = \mathcal{E}_L(u_n,u_n) + o(1) \to \lambda_k \ \text{as} \ n \to \infty.
\]
Moreover, inserting $v=u_n-u$ in \eqref{PS} and using \eqref{est-bounded}, we get \begin{equation}\label{1-3}
\mathcal{E}_L(u_n,u_n-u)-\delta_n\into \omega(x) u_n(u_n-u)~dx\to 0 \quad n \to \infty.
\end{equation}
By Lemma \ref{limit passage} (iii) and \eqref{1-2}, we get
\[\into \omega(x) u_n(u_n-u)~dx\to 0 \quad \text{as } n \to \infty.\]
This, \eqref{1-3} and the fact that $\delta_n \to \lambda_k$, yields, $\mathcal{E}_L(u_n,u_n-u)\to 0$ as $n\to\infty.$
Finally, by the using $(S)$ property of $I$ (see \cite[Lemma 3.3]{Arora-Hajaiej-Perera}), upto a subsequence, we get $\|u_n-u\|_{\h(\Omega)}\to 0$ as $n \to \infty$. Hence, the required claim. 
\end{proof}

Building on the above results, we prove the existence of an unbounded sequence of eigenvalues corresponding to \eqref{equ}, thereby showing that the spectrum diverges to infinity.
 
\begin{theorem}
    \label{exist-eigenval-L}
 Assume \eqref{assump w0}-\eqref{assump w2}  hold true. The problem \eqref{equ} admits a sequence of eigenvalues 
    \[\lambda_0 \leq  \lambda_1\leq\lambda_2\leq \cdots \leq \lambda_k\leq \cdots\] that diverges to $+\infty,$ where $\lambda_k$ is defined in \eqref{lambda}
    and $\lambda_0$ is the constant in \eqref{assump w2}.
\end{theorem}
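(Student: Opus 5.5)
The plan is to apply the Lusternik--Schnirelman principle on the symmetric $C^1$ manifold $M$, using the even functional $I$ and the even constraint $J$. First, each $\lambda_k$ is finite and the sequence is ordered. By \eqref{assump w0} there is a set of positive measure where $\omega>0$. On it one can choose $k$ functions in $\h(\Omega)$ with disjoint supports (for instance smooth bumps concentrated near Lebesgue points of $\{\omega>0\}$), each with $J>0$. The span $V_k$ is $k$-dimensional and $J$ is positive definite on it. Hence $A_k:=V_k\cap M$ is a symmetric set homeomorphic to $\mathbb{S}^{k-1}$, so $\gamma(A_k)=k$ and $F_k\neq\emptyset$. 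Since $A_k$ is compact in $\h(\Omega)$, we get $\lambda_k\le \max_{A_k}I<\infty$. Monotonicity $\lambda_k\le\lambda_{k+1}$ follows from $F_{k+1}\subseteq F_k$, and $\lambda_1\ge\lambda_0$ follows from Lemma \ref{energy bdd below}.

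Second, each $\lambda_k$ is a critical value of $I|_M$. Suppose $\lambda_k$ is a regular value. By Lemma \ref{ps-lem}, $I|_M$ satisfies (PS) at $\lambda_k$. Together with $\|J'(u)\|\ge J'(u)u/\|u\|=2/\|u\|_{\h(\Omega)}$, which is bounded below on bounded sets, this gives admissibility of the family in the sense of \cite[Definition 2.3]{Bonnet} near the level $\lambda_k$. Lemma \ref{def-lem} then yields an odd homeomorphism $\eta$ of $M$. Choose $A\in F_k$ with $\sup_A I<\lambda_k+\epsilon_1$. Then $\eta(A)$ is closed and symmetric, and $\gamma(\eta(A))\ge\gamma(A)\ge k$ by Lemma \ref{genus-prop}(ii) applied to $\eta^{-1}$. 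But $\sup_{\eta(A)}I\le\lambda_k-\epsilon_1$, which contradicts the definition of $\lambda_k$. So there exist $u\in M$ and a Lagrange multiplier $\delta$ with $\mathcal{E}_L(u,v)=\delta\int_\Omega\omega uv\,dx$ for all $v$. Testing with $v=u$ gives $\delta=I(u)=\lambda_k$, so $\lambda_k$ is an eigenvalue in the sense of Definition \ref{weak-formulatn}.

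Third, and I expect this to be the main obstacle, we must show $\lambda_k\to+\infty$. I would follow the hint in the paper. Since $\h(\Omega)$ is separable, there is a biorthogonal system $(e_m,e_m^*)\subseteq\h(\Omega)\times\h(\Omega)^*$ with $\overline{\sp}\{e_m\}=\h(\Omega)$ and the $e_m^*$ total. Set $Z_k:=\{u: e_m^*(u)=0,\ m\le k-1\}$, which has codimension $k-1$. By Lemma \ref{genus-prop}(iii), every $A\in F_k$ meets $Z_k$, so
\[\lambda_k\ge \mu_k:=\inf_{u\in M\cap Z_k}I(u).\]
Suppose $\mu_k$ stays bounded. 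Pick $u_k\in M\cap Z_k$ with $I(u_k)\le C$. The coercivity argument of Lemma \ref{ps-lem} (Theorem \ref{Hardy-inequ} combined with \eqref{assump w2} and \eqref{E-L-bd-below-above}) shows that $(u_k)$ is bounded in $\h(\Omega)$. For each fixed $m$ we have $e_m^*(u_k)=0$ once $k>m$, so any weak limit $u$ satisfies $e_m^*(u)=0$ for all $m$, and hence $u=0$ because the $e_m^*$ are total. On the other hand, Theorem \ref{cpct-embed} gives $u_k\to u$ in $L^\eta(\Omega)$, and Lemma \ref{limit passage}(i) then gives $\int_\Omega\omega u^2\,dx=1$, so $u\neq0$. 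This contradiction shows $\mu_k\to\infty$, hence $\lambda_k\to\infty$.

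The delicate point is choosing the biorthogonal system so that the $e_m^*$ are total; this follows from the Markushevich basis theorem for separable Banach spaces. Compactness of the embedding is what rules out the escape of minimizing sequences.
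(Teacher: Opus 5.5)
Your proposal is correct and follows essentially the same route as the paper. You use the deformation lemma on $M$ together with Lemma \ref{ps-lem} to show that each $\lambda_k$ is a critical value, and $F_{k+1}\subseteq F_k$ for monotonicity. To force $\lambda_k\to+\infty$ you combine a biorthogonal system with Lemma \ref{genus-prop}(iii) and the compact embedding $\h(\Omega)\hookrightarrow L^\eta(\Omega)$; your annihilator space $Z_k$ is exactly the paper's $H_k=\overline{\sp\{w_k,w_{k+1},\dots\}}$.

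Your check that $F_k\neq\emptyset$ and $\lambda_k<\infty$ fills a step the paper leaves implicit. The one slip is the genus inequality: $\gamma(\eta(A))\ge\gamma(A)$ follows from Lemma \ref{genus-prop}(ii) applied to $\eta|_A$ itself, not to $\eta^{-1}$. Applying it to $\eta^{-1}$ gives the reverse inequality, which is harmless here since $\eta$ is a homeomorphism.
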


\begin{proof}
We first prove that $\lambda_k$ defined in \eqref{lambda} is a critical value {of $I$}. Fix $k \in \N$. On the contrary, suppose that $\lambda_k$ is a regular value of $I$. 
By the deformation theorem on $C^1$-manifolds (see Lemma \ref{def-lem} {(iii)}), there exist $ \eps > 0$ and an odd homeomorphism $\Theta : M \to M$ such that
\begin{equation}
\label{odd-homeo}
I(\Theta(u)) \leq \lambda_k-\eps \quad \text{for all} \ u \in M \quad \text{if} \quad I(u) \leq \lambda_k+\eps.
\end{equation}
By \eqref{lambda}, there exists $A\in F_k$ such that \[\sup_{u \in A} I(u)\leq \lambda_k+\eps.\]
Using Lemma \ref{genus-prop} (ii) with $A=A$ and $B=\Theta(A)$, we have $\gamma(\Theta(A))\geq\gamma(A)\geq k$. This further gives in view of \eqref{odd-homeo}
\[ \Theta(A) \in F_k \quad \text{and} \quad \sup_{u \in \Theta(A)}I(u) \leq \lambda_k-\eps,\] which contradicts \eqref{lambda}.  Therefore, $\lambda_k$ is a critical value of $I$. Now, by Ekeland's variational principle, there exist $(\delta_n)_{n \in \N}\subseteq\R$ and Palais-Smale sequence $(u_n)_{n \in\N} \subseteq M$ at critical value $\lambda_k$. 
By applying Lemma \ref{ps-lem}, the above Palais-Smale sequence $u_n$ has strongly convergent subsequence, {\it i.e.,} there exists a $u \in M$ such that $u_n \to u$ (upto a subsequence) in $\h(\Omega)$ as $n\to\infty.$ Next, we show that
    \begin{equation}\label{est:ray-2}
        \mathcal{E}(u_n, u_n) \to \mathcal{E}(u, u) \quad \text{and} \quad \lim_{n \to \infty} \intb \frac{u_n(x)u_n(y)-u(x)u(y)}{|x-y|^N} ~dx ~dy.
    \end{equation}
The first claim follows directly from the definition of the $\h(\Omega)$-norm. To establish the second claim, we proceed as follows. Using the symmetry property, Lemma \ref{est:lower-order-term} and the fact that $u_n \to u$ in $L^2(\Omega)$, we obtain
\begin{equation}\label{est:lower:conv}
    \begin{split}
        \intb & \frac{u_n(x)u_n(y)}{|x-y|^N} ~dx ~dy - \intb \frac{u(x)u(y)}{|x-y|^N} ~dx ~dy\\
        & = \intb \frac{u_n(y)(u_n-u)(x)+u(x)(u_n-u)(y)}{|x-y|^N} ~dx ~dy\\
        & \leq |\Omega|\l(\|u_n\|_{L^2(\Omega)}\|u_n-u\|_{L^2(\Omega)}+\|u\|_{L^2(\Omega)}\|u_n-u\|_{L^2(\Omega)} \r) \to 0,
    \end{split}
\end{equation}
  as $n \to \infty$. Hence the claim. Now, by using \eqref{est:ray-2}, Theorem \ref{cpct-embed} and Lemma \ref{limit passage}, we conclude that $u\in M$ satisfies \[\mathcal{E}_L(u,u)=\lambda_k \into \omega(x) u^2 ~dx.\]
Now, $\lambda_k\leq \lambda_{k+1}$ follows from the fact that $F_{k+1}\subset F_{k}$ and the definition of $\lambda_k$. Next, we prove that $\lambda_k \to \infty$ as $k \to \infty.$ Suppose, for contradiction, there exists a $\beta>0$ such that $ \lambda_k\leq \beta$, for all $k \in \N.$ Since $\h(\Omega)$ is separable, by \cite[Theorem 1]{Ovsepien-Pelczynski}, there exists a sequence $(w_k, w_k^\ast)_{k \in \N} \subset \h(\Omega) \times (\h(\Omega))^\ast$ with the following properties
\begin{equation}\label{seq-prop}
\h(\Omega) = \overline{\sp\{w_k: k \in \mathbb{N}\}} \quad \text{and} \quad \langle w_k^\ast, w_j \rangle_{(\h(\Omega))^\ast, \h(\Omega)} = \delta_{i,j} \ \text{for all} \ i,j \in \N.
\end{equation}
Moreover, if $\langle w_k^\ast, v \rangle =0$ for all $n \in \mathbb{N}$ implies that $v=0.$
Let
\[
H_k = \overline{\sp\{w_k, w_{k+1}, \cdots\}}
\quad \text{and} \quad
a_k = \inf_{A \in F_k} \; \sup_{u \in A \cap H_k} \mathcal{E}_L(u,u).
\]
Note that the co-dimension of $H_k$ is $(k-1)$. By Lemma \ref{genus-prop} (iii), we have
\[
A \cap H_k \neq \emptyset \quad \text{for all } A \in F_k.
\]
Furthermore,
\[
a_k \leq \lambda_k \leq \beta, \quad \text{ for all } k \in \mathbb{N}.
\]
Now, for each $k \in \mathbb{N}$, choose $v_k \in A \cap H_k$ such that
\[
\int_\Omega \omega(x) v_k^2 ~dx = 1
\quad \text{and} \quad
 a_k \leq \mathcal{E}_L(v_k,v_k) \leq \beta+1.
\]
Now, by using Theorem \ref{Hardy-inequ}, \eqref{assump w2} and the fact that $v_k \in M$, we obtain 
    \[
    \begin{split}
    \beta + 1 \geq \mathcal{E}_L(v_k, v_k)&\geq\int_{\R^N} v_k^2 \ln\l(\frac{1}{|x|^2}\r) ~dx+\l(\ln4+2\psi(\frac{N}{4})\r)\int_{\R^N} v_k^2 ~dx\\
    &\geq \alpha\into v_k^2 ~dx+\lambda_0,
 \quad \text{for all} \ k \in \mathbb{N}.    
 \end{split}
    \]
This implies
    \[\|v_k\|^2_{L^2(\Omega)}\leq \l(\frac{\beta +1 - \lambda_0}{\alpha}\r) \]
and
\[
  \mathcal{E}(v_k,v_k) \leq  \beta + 1 + \max\{(c_N|\Omega|-\rho_N),0\} \l(\frac{\beta + 1 -\lambda_0}{\alpha}\r).
\]
Hence, $(v_k)_{k \in \N}$ is bounded in $\h(\Omega)$. 
Therefore, by using the reflexivity of $\h(\Omega)$ and Lemma \ref{limit passage} (i), we can ensure the existence of an element 
$0\not\equiv v \in \h(\Omega)$ such that
\[
v_k \rightharpoonup v \quad \text{in } \h(\Omega)
\text{ with }
\into \omega(x) v^2 ~dx = 1.
\]
However, by the choice of the sequence $(w_k, w_k^\ast)_{k \in \mathbb{N}}$ satisfying \eqref{seq-prop}, we have
\[
w_m^\ast(v) = \lim_{k \to \infty} w_m^\ast(v_k) = 0,
\quad \text{for every } m \in \mathbb{N},
\]
which implies that $v = 0$. This is a contradiction. Hence,
\[
\lambda_k \to \infty \quad \text{as } k \to \infty.
\]
This completes the proof.

\end{proof}

In what follows, $e_1$ (called first eigenfunction) denotes the eigenfunction corresponding to the first eigenvalue $\lambda_1$ in $\Omega$ (defined in \eqref{lambda}). 

Next, we establish the existence of the principal eigenvalue associated with \eqref{equ} by means of the Rayleigh quotient and show that it coincides with the first Lusternik-Schnirelmann eigenvalue.

\begin{theorem}
\label{existence lambda1}
 Assume \eqref{assump w0}-\eqref{assump w2} hold true. The infimum\begin{equation}
    \label{P1}
    \tag{P$_1$}
     \lambda_1^R=\inf_{u\in M}\ \mathcal{E}_L(u,u),
\end{equation} is achieved by some $e_1^R \in M$. 
\end{theorem}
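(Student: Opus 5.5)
We use the direct method of the calculus of variations on the constraint manifold $M$. First, $M\neq\emptyset$ by \eqref{assump w0}: since $\omega^+\not\equiv 0$, there is a set of positive measure where $\omega>0$, and a suitable function $u\in\h(\Omega)$ concentrated there has $J(u)>0$; rescaling gives a point of $M$. Then Lemma \ref{energy bdd below} gives $\lambda_1^R\geq\lambda_0>-\infty$, so $\lambda_1^R$ is a finite number.

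Next take a minimizing sequence $(u_n)\subseteq M$ with $\mathcal{E}_L(u_n,u_n)\to\lambda_1^R$. We repeat the coercivity argument from the proof of Lemma \ref{ps-lem}. Theorem \ref{Hardy-inequ} together with \eqref{assump w2} and $J(u_n)=1$ gives
\[
\lambda_1^R+1\geq \alpha\|u_n\|_{L^2(\Omega)}^2+\lambda_0
\]
for large $n$, which bounds $\|u_n\|_{L^2(\Omega)}$. Then \eqref{E-L-bd-below-above} bounds $\mathcal{E}(u_n,u_n)$, so $(u_n)$ is bounded in $\h(\Omega)$. By reflexivity and Theorem \ref{cpct-embed}, after passing to a subsequence we have $u_n\rightharpoonup e_1^R$ in $\h(\Omega)$, and $u_n\to e_1^R$ in $L^\eta(\Omega)$ and in $L^2(\Omega)$. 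Lemma \ref{limit passage} (i) then gives $J(e_1^R)=\lim J(u_n)=1$, so $e_1^R\in M$; in particular $e_1^R\neq 0$.

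The main step is weak lower semicontinuity of $I$, since $\mathcal{E}_L$ is not coercive and has the indefinite term. Split $\mathcal{E}_L$ into three parts as in \eqref{quadratic-form}.
\begin{itemize}
\item The part $\mathcal{E}(u,u)$ is the square of a norm, so it is weakly lower semicontinuous: $\mathcal{E}(e_1^R,e_1^R)\leq\liminf\mathcal{E}(u_n,u_n)$.
\item The nonlocal $j$-term converges, by exactly the computation in \eqref{est:lower:conv}, using Lemma \ref{est:lower-order-term} and the strong $L^2$ convergence.
\item The term $\rho_N\|u_n\|_{L^2}^2$ converges by strong $L^2$ convergence.
\end{itemize}
Hence
\[
I(e_1^R)\leq\liminf_{n\to\infty} I(u_n)=\lambda_1^R,
\]
and since $e_1^R\in M$ we also have $I(e_1^R)\geq\lambda_1^R$. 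So the infimum is attained at $e_1^R$.

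Finally, to identify $\lambda_1^R$ with $\lambda_1$ (needed for the main theorem), use the genus. The set $A=\{e_1^R,-e_1^R\}$ has genus $1$ by Lemma \ref{genus-prop} (i), so $A\in F_1$ and $\lambda_1\leq I(e_1^R)=\lambda_1^R$. Conversely, every $A\in F_1$ is a subset of $M$, so $\sup_A I\geq\lambda_1^R$, which gives $\lambda_1\geq\lambda_1^R$. Therefore $\lambda_1=\lambda_1^R$.
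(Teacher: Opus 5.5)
Your proposal is correct and follows essentially the same route as the paper. You take a minimizing sequence, bound it in $\h(\Omega)$ via Theorem \ref{Hardy-inequ}, \eqref{assump w2} and \eqref{E-L-bd-below-above}, then combine weak lower semicontinuity of $\mathcal{E}$ with the convergence \eqref{est:lower:conv} of the $j$-term and Lemma \ref{limit passage} (i) to keep the limit in $M$. The differences are only organizational: the paper's proof also derives the Euler--Lagrange equation via a Lagrange multiplier to show the minimizer is an eigenfunction, which the statement does not require. Your identification $\lambda_1=\lambda_1^R$ is exactly the argument of the paper's Remark \ref{Rayleigh-char}.
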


\begin{proof}
    Let {$(u_n)_{n\in \N} \subseteq M$} be a minimizing sequence for \eqref{P1}, existence of such a sequence follows from Lemma \ref{energy bdd below}. Thus, there exists a constant $C_1>0$ such that
    $\mathcal{E}_L(u_n,u_n)\leq C_1$. Now, by repeating the same arguments as in Theorem \ref{exist-eigenval-L}, we get $(u_n)_{n \in \N}$ is bounded in $\h(\Omega)$. By reflexivity of $\h(\Omega)$ and Theorem \ref{cpct-embed}, for some $u\in \h(\Omega)$ we get $u_n \rightharpoonup u$ in $\h(\Omega),$ $u_n \to u$ in $L^\eta(\Omega)$ and a.e. in $\Omega$, upto a subsequence. By the weak lower semi-continuity of the $\h(\Omega)$ norm, we have
    \begin{equation}\label{est:ray-1}
       \mathcal{E}(u,u) \leq \liminf_{ n\to \infty} \mathcal{E}(u_n,u_n). 
    \end{equation}
Now, by combining \eqref{est:ray-1} and \eqref{est:ray-2}, we have 
    \begin{equation}\label{4-1}\mathcal{E}_L(u,u) \leq \liminf_{ n \to \infty} \mathcal{E}_L(u_n,u_n)=\inf_{u\in M} \mathcal{E}_L(u,u)=\lambda_1^R.\end{equation}
   Using Lemma \ref{limit passage} (i), we get
    \begin{equation} \label{4-2}1= \lim_{n \to \infty} \into \omega(x)  u_n^2 ~dx = \into \omega(x)  u^2 ~dx.\end{equation}
    From \eqref{4-1} and \eqref{4-2}, we obtain $\lambda_1^R=\mathcal{E}_L(u,u)$.
Let $v\in C_c^\infty(\Omega)$, then using Lagrange multipliers, we have
    \[
    \begin{split}
        0&= \frac{d}{d\eps}\bigg|_{\eps=0} \l[\mathcal{E}_L(u+\eps v, u+\eps v)-\lambda \l(\into \omega(x) (u+\eps v)^2 ~dx-1\r)\r]\\
        &=\lim_{\eps \to 0} \l[\frac{\mathcal{E}_L(u+\eps v, u+\eps v)-\mathcal{E}_L(u,u)}{\eps}-\frac{\lambda}{\eps} \into \omega(x) \l[(u+\eps v)^2-u^2\r] ~dx\r]\\
        & = 2 \mathcal{E}_L(u,v)-2 \lambda \into \omega(x)  uv~dx,
    \end{split}
    \]
    {for some $\lambda$. Hence $u$ is an eigenfunction corresponding to eigenvalue $\lambda$ and we denote it by $e_1^R$, which finishes the proof.}
\end{proof}

\begin{remark}
\label{Rayleigh-char}
The Rayleigh characterization of the first eigenvalue is
\[
\lambda_1^R=\min_{u\in M}\mathcal{E}_L(u,u).
\]
Note that $e_1^R$ exists by Theorem \ref{existence lambda1}.
The first Lusternik Schnirelmann eigenvalue $\lambda_1$ is defined by \eqref{lambda} for $k=1.$
By Lemma \ref{genus-prop} (i), the set $A_0 = \{\pm e_1^R\}$ has $\gamma(A_0)=1$. Hence, $A_0$ belongs to $F_1$. Then,
\[
\inf_{A \in F_1}\sup_{v \in A} \mathcal{E}_L(v,v)\leq\sup_{v \in A_0} \mathcal{E}_L(v,v) = \mathcal{E}_L(e_1^R,e_1^R),
\]
yields
\[
\lambda_1 \leq \lambda_1^R.
\]
Conversely, for any $A \in F_1$, by definition
\[
\sup_{u \in A} \mathcal{E}_L(u,u) \geq \inf_{u \in M} \mathcal{E}_L(u,u) = \lambda_1^R,
\]
and taking the infimum over $A \in F_1$ gives
\[
\lambda_1 \geq \lambda_1^R.
\]
 Therefore, the variational characterization of $\lambda_1$ via \eqref{lambda} and the one due to Rayleigh coincide. {Consequently, $e_1$ and $e_1^R$ coincide.}
\end{remark}

\section{Sign properties of eigenfunctions and nodal domain type estimate}
\label{sign properties}
In this section, we show that the first eigenfunction $e_1$ has constant sign, whereas eigenfunctions corresponding to higher eigenvalues change sign. Under additional assumptions, we also establish nodal domain type estimates for higher eigenfunctions.

\begin{proposition}
    \label{constant sign e-1}
 First eigenfunction $e_1$ does not change sign in $\Omega.$
\end{proposition}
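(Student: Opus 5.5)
The plan is to compare $e_1$ with $|e_1|$ via the Rayleigh characterization in Remark \ref{Rayleigh-char}, and then to upgrade ``$|e_1|$ is also a minimizer'' into ``$e_1$ has constant sign'' using the strict positivity of the kernel $j$ in the nonlocal term.

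First, $|e_1| \in \h(\Omega)$ and $J(|e_1|)=J(e_1)=1$, so $|e_1|\in M$. Next we compare the three pieces of $\mathcal{E}_L$. For the local part we use the pointwise bound $\big||a|-|b|\big| \le |a-b|$, which gives $\mathcal{E}(|e_1|,|e_1|)\le \mathcal{E}(e_1,e_1)$. The term $\rho_N\int e_1^2$ is unchanged. The nonlocal interaction term enters with a minus sign, and
\[
\intr |e_1(x)||e_1(y)|\, j(x-y)\,dx\,dy \;\ge\; \intr e_1(x)e_1(y)\, j(x-y)\,dx\,dy .
\]
Hence $\mathcal{E}_L(|e_1|,|e_1|)\le \mathcal{E}_L(e_1,e_1)=\lambda_1$. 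By minimality equality holds, so $|e_1|$ is also a minimizer. Every inequality above must then be an equality, and in particular
\[
\intr \big(|e_1(x)||e_1(y)| - e_1(x)e_1(y)\big)\, j(x-y)\,dx\,dy = 0 .
\]
The integrand is nonnegative, so for a.e. $(x,y)\in\Omega\times\Omega$ with $|x-y|\ge 1$ we get $e_1(x)e_1(y)\ge 0$. The local term gives a similar conclusion for $|x-y|<1$: equality in $\big||a|-|b|\big|\le|a-b|$ forces $e_1(x)e_1(y)\ge 0$ a.e. where $k(x-y)>0$. Combining the two, $e_1(x)e_1(y)\ge 0$ for a.e. $(x,y)\in\Omega\times\Omega$. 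This means that $e_1^+$ and $e_1^-$ cannot both be nontrivial, since otherwise the product set $\{e_1>0\}\times\{e_1<0\}$ would have positive measure and the product would be negative there. So $e_1$ does not change sign.

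The step I expect to need the most care is the equality-case analysis. One has to check that the kernel $k+j=|z|^{-N}$ is positive everywhere, so that the two contributions together cover all pairs $(x,y)$, and the infinite-energy issue near the diagonal has to be handled. Working with the difference of the integrands, which is finite since both energies are finite, avoids it. If that route gives trouble, a fallback is to apply the argument to the decomposition $e_1=e_1^+-e_1^-$. Expanding $\mathcal{E}_L(|e_1|,|e_1|)-\mathcal{E}_L(e_1,e_1)=4\,\mathcal{E}_L(e_1^+,e_1^-)$, where $\mathcal{E}_L(e_1^+,e_1^-)=-c_N\intr e_1^+(x)e_1^-(y)|x-y|^{-N}\,dx\,dy$ by disjointness of supports, shows this quantity is strictly negative unless $e_1^+\equiv 0$ or $e_1^-\equiv 0$. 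That already contradicts minimality directly.
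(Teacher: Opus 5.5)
Your proposal is correct and follows the paper's proof: it compares $e_1$ with $|e_1|\in M$ using the Rayleigh characterization $\lambda_1=\min_M\mathcal{E}_L$ (Remark \ref{Rayleigh-char}), and then uses the equality case to rule out a sign change. The only difference is that the paper cites \cite[Lemma 3.3]{Chen-Weth} for both the inequality $\mathcal{E}_L(|u|,|u|)\le\mathcal{E}_L(u,u)$ and its equality case, whereas you prove that lemma directly through the identity $\mathcal{E}_L(|u|,|u|)-\mathcal{E}_L(u,u)=4\,\mathcal{E}_L(u^+,u^-)=-4c_N\iint u^+(x)u^-(y)|x-y|^{-N}\,dx\,dy$.
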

\begin{proof}
    In view of Remark \ref{Rayleigh-char} and \cite[Lemma 3.3]{Chen-Weth}, we get 
    \[\lambda_1 \leq \mathcal{E}_L(|e_1|,|e_1|) \leq \mathcal{E}_L(e_1,e_1) =\lambda_1.\]
    Thus, $\mathcal{E}_L(|e_1|, |e_1|)=\mathcal{E}_L(e_1,e_1).$
    Hence, using \cite[Lemma 3.3]{Chen-Weth}, we get that $e_1$ does not change sign in $\Omega.$ 
\end{proof}

\begin{theorem}
\label{sign-changing}
Let $\omega \in L^\infty(\Omega).$ Then, eigenfunction $u$ associated to eigenvalue $\lambda \neq \lambda_1$ to the problem \eqref{equ} changes sign in $\Omega.$
\end{theorem}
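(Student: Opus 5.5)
Suppose, for contradiction, that $u$ is an eigenfunction for some eigenvalue $\lambda\neq\lambda_1$ and that $u$ does not change sign; replacing $u$ by $-u$, we may assume $u\geq 0$, $u\not\equiv 0$. The plan is a Picone / geodesic-convexity argument comparing $u$ with the first eigenfunction $e_1$. By Proposition \ref{constant sign e-1}, $e_1$ has constant sign, and we normalize $e_1\geq 0$ with $e_1\in M$.

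\textbf{Step 1: regularity and positivity.} Since $\omega\in L^\infty(\Omega)$, both $L_\Delta u=\lambda\omega u$ and $L_\Delta e_1=\lambda_1\omega e_1$ have right-hand sides of the form $c(x)v$ with $c\in L^\infty$. I would invoke the regularity theory for $L_\Delta$ (Chen--Weth and subsequent works) to get $u,e_1\in L^\infty(\Omega)$. I would then use a strong maximum principle for nonnegative supersolutions of $L_\Delta v + c(x)v\geq 0$ to get $u,e_1>0$ in $\Omega$. That principle follows from the nonlocal term: if $v(x_0)=0$, then $L_\Delta v(x_0)=-c_N\int v(y)k(x_0-y)\,dy-c_N\int v(y)j(x_0-y)\,dy<0$ unless $v\equiv 0$. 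The boundedness of $\omega$ is used exactly here, because we cannot truncate.

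\textbf{Step 2: the case $\int_\Omega\omega u^2\leq 0$.} Testing the equation with $u$ gives $\mathcal{E}_L(u,u)=\lambda\int_\Omega\omega u^2$. If $\int_\Omega\omega u^2\le 0$, I would derive a contradiction directly from the Hardy-type bound of Theorem \ref{Hardy-inequ} together with \eqref{assump w2}, which give $\mathcal{E}_L(u,u)>\lambda_0\int_\Omega\omega u^2$. This must be handled according to the signs of $\lambda$ and $\lambda_0$. If this route fails, I would instead let the interpolation argument of Step 3 cover this case as well.

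\textbf{Step 3: interpolation between $u$ and $e_1$ (main obstacle).} When $\int_\Omega\omega u^2>0$, normalize $u\in M$, so $\lambda=\mathcal{E}_L(u,u)>\lambda_1$ by the Rayleigh characterization of Remark \ref{Rayleigh-char}. Define the curve $\sigma_t=((1-t)e_1^2+tu^2)^{1/2}$. It satisfies $\sigma_t\in M$, since $J$ is linear in the squares. The hidden convexity of the quadratic form then gives
\[
\mathcal{E}_L(\sigma_t,\sigma_t)\leq (1-t)\mathcal{E}_L(e_1,e_1)+t\,\mathcal{E}_L(u,u).
\]
For the local part $\mathcal{E}$ this is the standard pointwise inequality $|\sigma_t(x)-\sigma_t(y)|^2\le(1-t)|e_1(x)-e_1(y)|^2+t|u(x)-u(y)|^2$. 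For the $j$-term it follows from $\sigma_t(x)\sigma_t(y)\geq(1-t)e_1(x)e_1(y)+tu(x)u(y)$ (Cauchy--Schwarz) together with positivity of the functions. This is precisely where nonnegativity enters: the nonlocal term with no sign would otherwise break convexity. Alternatively, and more cleanly, I would use a Picone identity. Testing the equation for $e_1$ with $u^2/(e_1+\varepsilon)$ and the equation for $u$ with $u$, then letting $\varepsilon\to0$, should give
\[
\lambda\int_\Omega\omega u^2=\mathcal{E}_L(u,u)\geq\lambda_1\int_\Omega\omega u^2,
\]
with equality only if $u/e_1$ is constant. Symmetrically, testing with $e_1^2/(u+\varepsilon)$ gives $\lambda_1\int_\Omega\omega e_1^2\geq\lambda\int_\Omega\omega e_1^2$. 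Since $\int_\Omega\omega e_1^2=1>0$, this yields $\lambda\le\lambda_1$. Combined with $\lambda>\lambda_1$, this is a contradiction.

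I expect the main obstacle to be justifying the discrete Picone inequality and the limit $\varepsilon\to0$ for the $j$-term. This needs the positivity of $u$ and $e_1$ from Step 1 and the $L^\infty$ bounds, so that the test functions lie in $\h(\Omega)$.
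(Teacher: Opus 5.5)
Your Step 2 has a genuine gap, and it cannot be closed, because the statement is false in that case. Theorem \ref{Hardy-inequ} together with \eqref{assump w2} gives, for every eigenpair $(\lambda,u)$,
\[
(\lambda-\lambda_0)\into \omega u^2\,dx=\mathcal{E}_L(u,u)-\lambda_0\into \omega u^2\,dx\geq \alpha\|u\|_{L^2(\Omega)}^2>0 .
\]
This rules out $\into\omega u^2\,dx=0$. When $\into\omega u^2\,dx<0$, however, it only yields $\lambda<\lambda_0$. Your Picone argument also gives no contradiction there: with $\into\omega u^2<0$, both test directions yield only $\lambda\le\lambda_1$.

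In fact no contradiction exists. Suppose $\omega^-\not\equiv 0$. Then $-\omega$ satisfies \eqref{assump w0}--\eqref{assump w2} with $-\lambda_0$ in place of $\lambda_0$ and the same $\alpha$. Applying Theorem \ref{existence lambda1} and Proposition \ref{constant sign e-1} to $-\omega$ produces a constant-sign $\tilde e\neq 0$ with $\mathcal{E}_L(\tilde e,v)=-\lambda_1(-\omega)\into\omega\tilde e v\,dx$ for all $v\in\h(\Omega)$. This is a non-sign-changing eigenfunction of \eqref{equ} with eigenvalue $-\lambda_1(-\omega)<\lambda_0<\lambda_1(\omega)$. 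Both strict inequalities follow from Lemma \ref{energy bdd below} and the fact that the minima are attained. So the theorem, as stated for all $\lambda\neq\lambda_1$, fails for genuinely indefinite weights.

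What is true, and what the paper actually proves, is the case $\lambda>\lambda_1$. The paper's proof begins ``without loss of generality \dots\ $\lambda>\lambda_1$'', and only this case is used later, in Proposition \ref{sec-alter}. Under that restriction your Step 2 is unnecessary: $\lambda>\lambda_1>\lambda_0$ and the display force $\into\omega u^2\,dx>0$, so you may normalize $u\in M$.

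Your Step 3 then completes the proof and is, in substance, the paper's argument. The paper takes $v_t=te_1^2+(1-t)u^2$ and uses convexity of $t\mapsto\mathcal{E}_L(v_t^{1/2},v_t^{1/2})$. It compares this with the one-sided derivative at the $u$-endpoint, which amounts to your Picone inequality $\mathcal{E}_L(u,e_1^2/u)\le\mathcal{E}_L(e_1,e_1)$. Testing the equation for $u$ with $e_1^2/u$ then gives $\lambda\le\lambda_1$. Note that your first formulation, the convexity inequality alone, gives nothing; the endpoint derivative is essential.

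The paper makes $e_1^2/u$ admissible using $L^\infty$ bounds and the boundary comparability $e_1/u,\,u/e_1\in L^\infty(\Omega)$. Your $\varepsilon$-regularization $e_1^2/(u+\varepsilon)$, with monotone convergence in the $j$-term, needs only $e_1\in L^\infty(\Omega)$ and $u>0$ a.e. The opposite test direction, with $u^2/(e_1+\varepsilon)$, is redundant.
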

\begin{proof}
Without loss of generality, we can assume that $e_1\geq 0$ a.e. in $\Omega$ and $\lambda>\lambda_1$. Suppose, by contradiction, $u \geq 0$ in $\Omega.$  By \cite[Theorem 1.1]{Jarohs-Weth} and \cite[Theorem 1.9]{Dyda-Jarohs-Sk}, $e_1,u>0$ in $\Omega$ and
$e_1,u \in L^\infty(\Omega).$ Moreover, by using \cite[Theorem 1.11]{Chen-Weth} and \cite[Corollary 5.3]{Santamaria-Rios-Saldana}, we get $\frac{e_1}{u}, \frac{u}{e_1} \in L^\infty(\Omega).$
Now, for all $t \in [0,1]$, define $v_t=te_1^2+(1-t)u^2.$ From \cite[Proposition 5.2]{Arora-Giacomoni-Vaishnavi}, we obtain
        \begin{equation}
        \label{1}
    \mathcal{E}_L(v_t^{\frac{1}{2}},v_t^{\frac{1}{2}})-\mathcal{E}_L(u,u)\leq t(\mathcal{E}_L(e_1,e_1)-\mathcal{E}_L(u,u)).
        \end{equation}
Furthermore, by the convexity of the map $t\mapsto f(t):=\mathcal{E}_L(v_t^{\frac{1}{2}},v_t^{\frac{1}{2}}),$ we have
        \[f(t)-f(0)\geq f'(0)t \quad \text{for every } t \in [0,1].\]
By using \cite[(5.3)]{Arora-Giacomoni-Vaishnavi} and the above convexity inequality, we get
\begin{equation}
\label{2}
\mathcal{E}_L(v_t^{\frac{1}{2}},v_t^{\frac{1}{2}})-\mathcal{E}_L(u,u)\geq t\mathcal{E}_L\l(u,\frac{u^2-e_1^2}{u}\r).
\end{equation}
Combining \eqref{1} and \eqref{2}, we obtain
\[
\mathcal{E}_L\l(u,\frac{u^2-e_1^2}{u}\r)\leq \mathcal{E}_L(e_1,e_1)-\mathcal{E}_L(u,u). 
\]
Now, by using the fact that $e_1, u \in M$ and the weak formulation \eqref{weak-formulatn}, we have
\[0=\lambda\into \omega(x)(u^2-e_1^2)~dx\leq \lambda_1\into \omega(x) e_1^2 ~dx-\lambda\into \omega(x) u^2 ~dx=(\lambda_1-\lambda),\] 
{\it i.e.,} $\lambda\leq \lambda_1$, a contradiction to $\lambda>\lambda_1.$ 
Applying the same arguments on $u\leq 0$ again leads to the same contradiction. Hence, $u$ must change sign in $\Omega.$
\end{proof}

Denote 
\begin{equation}\label{def:posit-negat}
    \Omega^\pm_u=\{x\in \Omega:u(x)\gtrless 0\}.
\end{equation}
Next, we state and prove a nodal domain type estimate on $\Omega^\pm_u$ which also plays a key role in deriving the properties of first eigenvalue in the next section.  
\begin{theorem}
\label{nodal estimate}
 Let $\psi$ satisfies \eqref{psi}-\eqref{eta} and $\vartheta_1, \vartheta_2$ be Orlicz functions in $\Phi_c$ such that \[(\vartheta_{1})^{-1}_{\text{left}}(x) (\vartheta_2)^{-1}_{\text{left}}(x)\leq (\psi^\ast)^{-1}_{\text{left}}(x)\]
and $\omega\in L^{\psi^\ast}(\Omega)\cap L^{\vartheta_1}(\Omega)$. Let $v$ be an eigenfunction associated to an eigenvalue $\lambda\neq \lambda_1$. Then,
   \[|\lambda|\geq \frac{\vartheta_2^{-1}(\frac{1}{|\Omega^\pm_v|})\l(1-\max\{(c_N|\Omega^\pm_v|-\rho_N),0\}S_2\r)}{2S_1\|\omega\|_{L^{\vartheta_1}(\Omega)}},\]
      where $S_1, S_2$ are the best constants of the embeddings $\h(\Omega)\hookrightarrow L^\ph(\Omega)$ and $\h(\Omega) \hookrightarrow L^2(\Omega),$ respectively.
\end{theorem}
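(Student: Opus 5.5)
We follow the standard nodal-domain argument: test the eigenvalue equation with the positive (resp. negative) part of $v$. Since $\lambda\neq\lambda_1$, the function $v$ changes sign, so both $v^+$ and $v^-$ are nontrivial. Here we invoke Theorem \ref{sign-changing}, or, if $\omega\notin L^\infty$, the constant-sign argument through the Rayleigh characterization. Both $v^\pm$ lie in $\h(\Omega)$. Taking $v^+$ as test function in Definition \ref{weak-formulatn} gives
\[
\mathcal{E}_L(v,v^+)=\lambda\into \omega (v^+)^2\,dx .
\]

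The first key step is a lower bound for the left-hand side. Writing $v=v^+-v^-$, we have $\mathcal{E}(v,v^+)\geq \mathcal{E}(v^+,v^+)$, because the cross term $-\mathcal{E}(v^-,v^+)$ is nonnegative: its integrand is $-(v^-(x)-v^-(y))(v^+(x)-v^+(y))\geq 0$. The $j$-term has no definite sign, so we estimate it directly with Lemma \ref{est:lower-order-term}, restricted to the support $\Omega^+_v$ of $v^+$. The contribution involving $v^+$ in both variables is bounded by $c_N|\Omega_v^+|\,\|v^+\|_{L^2}^2$, using Cauchy--Schwarz on $\|v^+\|_{L^1}\leq|\Omega_v^+|^{1/2}\|v^+\|_{L^2}$. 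The cross contribution $+c_N\iint v^-(x)v^+(y)j$ is nonnegative and can be discarded. Together with $\rho_N\|v^+\|_2^2$, and since the $\rho_N v^-v^+$ term vanishes, this yields
\[
\mathcal{E}_L(v,v^+)\geq \mathcal{E}(v^+,v^+)-\max\{c_N|\Omega_v^+|-\rho_N,0\}\|v^+\|_{L^2}^2\geq \bigl(1-\max\{c_N|\Omega^+_v|-\rho_N,0\}S_2\bigr)\|v^+\|^2_{\h(\Omega)},
\]
with $S_2$ the embedding constant written in the normalization $\|u\|_2^2\leq S_2\|u\|_{\h}^2$.

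The second step is an upper bound for $|\lambda|\,|\into\omega(v^+)^2|$. We use the generalized Orlicz H\"older inequality, which holds under $(\vartheta_1)^{-1}(\vartheta_2)^{-1}\leq(\psi^\ast)^{-1}$ and is valid with constant $2$. On $\Omega_v^+$ it gives
\[
\into|\omega|(v^+)^2\leq 2\|\omega\|_{L^{\vartheta_1}}\,\|{\bf 1}_{\Omega_v^+}\|_{L^{\vartheta_2}}\,\|(v^+)^2\|_{L^{\psi}}.
\]
The Luxemburg norm of an indicator is $\|{\bf 1}_{E}\|_{L^{\vartheta_2}}=1/\vartheta_2^{-1}(1/|E|)$. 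Lemma \ref{lem:norm-est} controls the last factor by $\|v^+\|^2_{L^\eta}$, and the continuous embedding of Theorem \ref{cpct-embed} bounds this by $S_1\|v^+\|_{\h}^2$. Here we must ensure that the embedding constant into $L^\eta$ is controlled by $S_1$, the constant for $L^\ph$. This holds because $\eta\lesssim\ph$ at infinity, which should allow replacing $\|\cdot\|_{L^\eta}$ by $\|\cdot\|_{L^\ph}$ up to the stated constant; I expect this to be the main bookkeeping obstacle, possibly requiring a normalization of $\eta\leq\ph$. The precise form of the H\"older inequality in generalized Orlicz spaces (\cite{Harjulehto-Hasto}) also needs checking at this point.

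Finally, we combine the two bounds and divide by $\|v^+\|_{\h}^2>0$:
\[
1-\max\{c_N|\Omega^+_v|-\rho_N,0\}S_2\leq |\lambda|\,\frac{2S_1\|\omega\|_{L^{\vartheta_1}}}{\vartheta_2^{-1}(1/|\Omega^+_v|)},
\]
which is the claim for $\Omega_v^+$. Applying the same argument to $-v$, with test function $v^-$, gives the estimate for $\Omega_v^-$.
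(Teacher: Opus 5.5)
Your core estimate is the paper's proof. You test with $v^+$, drop the nonnegative cross terms, and bound $\lambda\into\omega (v^+)^2\,dx$ using Orlicz--H\"older, O'Neil's product inequality, the indicator norm, Lemma~\ref{lem:norm-est} and the embedding. Your direct lower bound $\mathcal{E}_L(v,v^+)\geq\mathcal{E}(v^+,v^+)-\max\{c_N|\Omega^+_v|-\rho_N,0\}\|v^+\|_{L^2}^2$ is exactly what the paper gets from $\mathcal{E}_L(v,v^+)\geq\mathcal{E}_L(v^+,v^+)$ combined with \eqref{E-L-bd-below-above} localized to $\Omega^+_v$. Both bookkeeping caveats you raise are real, and the paper does not settle them either:
\begin{itemize}
\item Composing the H\"older inequality (constant $2$) with O'Neil's inequality (constant $2$), as in \eqref{orlicz-est}, gives the constant $4$, not $2$. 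The paper's displayed chain silently drops a factor.
\item The paper silently replaces $\|v^+\|_{L^\eta}$ from Lemma~\ref{lem:norm-est} by $\|v^+\|_{L^\ph}$.
\end{itemize}

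The genuine gap is your first step. To divide by $\|v^+\|^2_{\h(\Omega)}$ you need $v^+\not\equiv 0$ (and $v^-\not\equiv0$ for the other bound), and your justification does not provide it. Theorem~\ref{sign-changing} requires $\omega\in L^\infty(\Omega)$, which is not assumed here, and its proof only treats $\lambda>\lambda_1$. The Rayleigh characterization shows that $e_1$ has constant sign; it says nothing about other eigenfunctions changing sign.

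In fact the claim is false for genuinely indefinite weights. Take, e.g., a bounded $\omega$ with $\omega^-\not\equiv 0$.
\begin{itemize}
\item By Theorem~\ref{Hardy-inequ} and \eqref{assump w2}, $\mathcal{E}_L(u,u)\geq \alpha\|u\|_{L^2}^2-\lambda_0$ on $\{u\in\h(\Omega):\into\omega u^2\,dx=-1\}$.
\item The compactness argument of Theorem~\ref{existence lambda1} therefore gives a minimizer on this set. Replacing it by its absolute value does not increase $\mathcal{E}_L$, so you get a nonnegative eigenfunction.
\item Its eigenvalue is minus the minimum, hence $\lambda_{-1}<\lambda_0$. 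Since $\lambda_1\geq\lambda_0+\alpha\|e_1\|_{L^2}^2>\lambda_0$, this eigenvalue differs from $\lambda_1$.
\end{itemize}
For this $v$ you have $|\Omega^-_v|=0$, and the $\Omega^-_v$ inequality is meaningless: for $N\geq 2$, where $\rho_N>0$, and finite-valued $\vartheta_2$, it asserts $|\lambda|\geq+\infty$.

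The paper's proof has the same hole, since it divides by $\mathcal{E}(v^+,v^+)$ without comment. The honest repair is to assert the bound only for those of $\Omega^\pm_v$ that have positive measure, which is all your argument actually uses. Alternatively, add hypotheses that genuinely force a sign change.
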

\begin{proof}
Let $v$ be an eigenfunction associated to an eigenvalue $\lambda\neq \lambda_1$. Set $A(t)=\vartheta_1(t)$, $B(t)=\vartheta_2(t)$ and $C(t)=\psi^\ast(t)$ in \cite[Theorem 2.3]{Neil} and using the definition of Orlicz norm in \eqref{orlicz-norm-def}, we get 
      \begin{equation} \label{orlicz-est}\|\omega\|_{L^{\psi^\ast}(\Omega^+_v)}\leq2\|\omega\|_{L^{\vartheta_1}(\Omega^+_v)}\|1\|_{L^{\vartheta_2}(\Omega^+_v)}\leq 2 \|\omega\|_{L^{\vartheta_1}(\Omega^+_v)}\l(\frac{1}{\vartheta_2^{-1}(\frac{1}{|\Omega^+_v|})}\r).\end{equation}

By taking $v^+$ as a test function in \eqref{weak-formulatn}, we get
\[
\begin{split}
\mathcal{E}_L(v,v^+)=\lambda\int_{\Omega} \omega(x) (v^+)^2 ~dx.
\end{split}
\]
Using $v^+(x)v^{-}(x)=0,$ we get
\begin{equation}\label{sign-ineq}
    (v(y)-v(x))(v^+(y)-v^+(x))=(v^+(y)-v^-(x))^2+v^+(x)v^-(y)+v^+(y)v^-(x).
\end{equation}
Now, using the definition of $\mathcal{E}_L(\cdot,\cdot)$ and $v= v^+ - v^-$, we obtain
\begin{align}
\label{est-2}
\begin{split}
&\mathcal{E}_L(v,v^+)=\bigg(\frac{c_N}{2}{\iint\limits_{\substack{x,y\in\R^N \\ \abs{x-y} \leq 1}}} \frac{|v^+(x)-v^+(y)|^2}{|x-y|^N} ~dx ~dy+c_N{\iint\limits_{\substack{x,y\in\R^N\\ \abs{x-y} \leq 1}}} \frac{v^+(x)v^-(y)}{|x-y|^N} ~dx ~dy\\&\qquad+\rho_N \int_{\Omega^+} v^2 ~dx+c_N{\iint\limits_{\substack{x,y\in\R^N \\ \abs{x-y} \geq 1}}} \frac{v^-(x)v^+(y)}{|x-y|^N} ~dx ~dy-c_N{\iint\limits_{\substack{x,y\in\R^N \\ \abs{x-y} \geq 1}}} \frac{v^+(x)v^+(y)}{|x-y|^N} ~dx ~dy\bigg)\\&\qquad=\lambda \int_{\Omega} \omega(x) (v^+)^2 ~dx.
\end{split}
\end{align}
It is easy to see that
\begin{equation}\label{est-3}
    \begin{split}
\mathcal{E}_L(v,v^+)&\geq \frac{c_N}{2}{\iint\limits_{\substack{x,y\in\R^N \\ \abs{x-y} \leq 1}}} \frac{|v^+(x)-v^+(y)|^2}{|x-y|^N} ~dx ~dy + \rho_N \|v^+\|_2^2  \\
& \qquad \qquad -c_N{\iint\limits_{\substack{x,y\in\R^N \\ \abs{x-y} \geq 1}}} \frac{v^+(x)v^+(y)}{|x-y|^N} ~dx ~dy =\mathcal{E}_L(v^+,v^+).
\end{split}
\end{equation}
Combining \eqref{est-2} and \eqref{est-3}, we get
\begin{equation}\label{est-4}
    \mathcal{E}_L(v^+, v^+) \leq \lambda \int_{\Omega} \omega(x) (v^+)^2 ~dx.
\end{equation}
Now, by using \eqref{E-L-bd-below-above}, \eqref{est-4}, H\"older inequality, \eqref{orlicz-est}, Lemma \ref{lem:norm-est} and Theorem \ref{cpct-embed}, we further get
\[
\begin{split}
 \mathcal{E}(v^+,v^+) & -(c_N|\Omega^+|-\rho_N) \int_{\Omega^+} (v^+)^2 ~dx \leq \mathcal{E}_L(v^+,v^+) \\&\leq \lambda \int_{\Omega} \omega(x) (v^+)^2 ~dx \leq 2 |\lambda|\|\omega\|_{L^{\psi^\ast}(\Omega^+_v)} \|v^2\|_{L^{\psi}(\Omega^+_v)}\\
 & \leq 2|\lambda|\|\omega\|_{L^{\vartheta_1}(\Omega^+_v)}\l(\frac{1}{\vartheta_2^{-1}(\frac{1}{|\Omega^+_v|})}\r) \|v^+\|_{L^\ph(\Omega^+_v)}^2 \\
 & \leq 2S_1|\lambda|\|\omega\|_{L^{\vartheta_1}(\Omega)}\l(\frac{1}{\vartheta_2^{-1}(\frac{1}{|\Omega^+_v|})}\r) \mathcal{E}(v^+,v^+)
\end{split}
\]
where $S_1$ is the best constant of the embedding of $\h(\Omega)$ in $L^\ph(\Omega)$.

Using $\h(\Omega)\hookrightarrow L^2(\Omega),$ we get
 \[1\leq 2S_1|\lambda|\|\omega\|_{L^{\vartheta_1}(\Omega)}\l(\frac{1}{\vartheta_2^{-1}(\frac{1}{|\Omega^+_v|})}\r)+  \max\{(c_N|\Omega^+_v|-\rho_N),0\} S_2 \]
 where $S_2$ is the best constant of the embedding of $\h(\Omega) $ in $L^2(\Omega).$ Hence \[|\lambda|\geq \frac{\vartheta_2^{-1}(\frac{1}{|\Omega^+_v|})\l(1- \max\{(c_N|\Omega^+_v|-\rho_N),0\}S_2\r)}{2S_1\|\omega\|_{L^{\vartheta_1}(\Omega)}}.\]
By repeating the same arguments, the above holds for $\Omega^-$ as well. 
\end{proof}
\section{Qualitative properties and characterization of eigenvalues}
\label{behav-char}
\subsection{On the first eigenvalue}
In this subsection, we prove some properties of the first eigenvalue.

\begin{proposition}
\label{simplicity}
 The first eigenvalue $\lambda_1$ is simple. 
\end{proposition}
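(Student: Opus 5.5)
Suppose $u,v\in\h(\Omega)$ are two eigenfunctions for $\lambda_1$. The plan is to show that they are linearly dependent, using only the Rayleigh characterization of $\lambda_1$ (Remark \ref{Rayleigh-char}) and the constant-sign property (Proposition \ref{constant sign e-1}).

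First, $J(u)>0$. Testing the equation with $u$ gives $\mathcal{E}_L(u,u)=\lambda_1 J(u)$. If $J(u)\le 0$, choose $\mu\in\R$ with $\mu\,\omega$ below the bound in \eqref{assump w2} and use Theorem \ref{Hardy-inequ} as in Lemma \ref{energy bdd below}. This gives $\mathcal{E}_L(u,u)-\lambda_1 J(u)\geq \alpha\|u\|_2^2+(\lambda_0-\lambda_1)J(u)$ when $\lambda_0\le\lambda_1$. In the case $J(u)<0$ this is positive, which is a contradiction. If $J(u)=0$, then $\mathcal{E}_L(u,u)\ge\alpha\|u\|_2^2>0$, again a contradiction.

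The sign of $J$ needs care. The cleanest route is to note that $\mathcal{E}_L(w,w)-\lambda_1J(w)\ge 0$ for all $w$ with $J(w)>0$, by the minimality of $\lambda_1$ on $M$ and homogeneity. For $J(w)\le0$ the same inequality should follow from \eqref{assump w2} combined with Theorem \ref{Hardy-inequ}, provided $\lambda_1\ge\lambda_0$ and the sign of $\lambda_0$ is handled correctly. This is the step I would check most carefully.

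Given $J(u)>0$, normalize $u,v\in M$. Both are then minimizers of $\mathcal{E}_L$ on $M$, and by Proposition \ref{constant sign e-1} each has constant sign; take $u,v\ge0$. Since $\int_\Omega\omega u^2\,dx=1>0$, the map $t\mapsto \int_\Omega \omega (u-tv)^2\,dx$ is a quadratic polynomial. Two cases arise.

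If there is $t$ with $J(u-tv)>0$ and $u-tv\not\equiv0$, then $w=u-tv$ is again an eigenfunction of $\lambda_1$, since the eigenspace is linear. After normalizing, $w$ is a minimizer on $M$, so by Proposition \ref{constant sign e-1} it does not change sign.

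The remaining case is harder, so I would argue differently. Set $t^\ast=\sup\{t\ge0: u-tv\ge0 \text{ a.e.}\}$. Using strict positivity $u,v>0$ in $\Omega$ together with boundary comparability $u/v\in L^\infty$ (the regularity results quoted in the proof of Theorem \ref{sign-changing}), $t^\ast\in(0,\infty)$. Then $w=u-t^\ast v\ge0$ is an eigenfunction for $\lambda_1$. If $w\not\equiv0$, the strong maximum principle (\cite[Theorem 1.1]{Jarohs-Weth}) gives $w>0$ with $w/v$ bounded below, so $u-(t^\ast+\epsilon)v\ge0$ for small $\epsilon>0$. This contradicts the maximality of $t^\ast$, hence $u=t^\ast v$.

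These regularity tools were used under $\omega\in L^\infty$. In the general Orlicz setting I would replace them with the following argument. For every $t$, $w_t=u-tv$ satisfies $\mathcal{E}_L(w_t,w_t)=\lambda_1J(w_t)$. If $J(w_t)>0$, then $w_t/\sqrt{J(w_t)}$ minimizes over $M$, so $w_t$ has constant sign. Since $J(u)=J(v)=1$, $J(w_t)$ is positive near $t=0$. Choose $t_0=\int u/\int v$ (both integrals positive), so that $\int_\Omega w_{t_0}=0$. If $J(w_{t_0})>0$, constant sign together with zero mean forces $w_{t_0}\equiv0$.

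So the main obstacle is excluding $J(w_{t_0})\le0$ for nonzero $w_{t_0}$. I would handle it with the first step: any eigenfunction $w\ne0$ for $\lambda_1$ has $J(w)>0$, via the \eqref{assump w2}/Hardy argument. This requires $\lambda_1>0$ or a sign compatibility with $\lambda_0$. If that fails, I would fall back on the regularity-based argument above.
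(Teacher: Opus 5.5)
Your final route (the zero-mean combination) is correct, but it is not the paper's argument. The paper takes two nonnegative eigenfunctions $u,v\in M$ and the ``hidden convexity'' function $z=\bigl(\tfrac{u^2+v^2}{2}\bigr)^{1/2}\in M$. It uses the Lindgren--Lindqvist inequality on $\{|x-y|<1\}$ and the pointwise Cauchy--Schwarz bound $u(x)u(y)+v(x)v(y)\le 2z(x)z(y)$ on $\{|x-y|\ge1\}$ to get $\mathcal{E}_L(z,z)\le\frac12\mathcal{E}_L(u,u)+\frac12\mathcal{E}_L(v,v)=\lambda_1$. So $z$ is a minimizer, and the equality cases force $u(x)v(y)=u(y)v(x)$ a.e., i.e.\ proportionality.

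You instead use linearity. Every nonzero element of the $\lambda_1$-eigenspace, once normalized into $M$, is a minimizer. It therefore has constant sign by Proposition~\ref{constant sign e-1}, whose proof applies to any minimizer. Hence the zero-mean combination $u-t_0v$, with $t_0=\int_\Omega u/\int_\Omega v$, must vanish. The paper's route is the one that survives in nonlinear ($p$-Laplacian type) problems, where the eigenspace is not a vector space. Yours is more elementary here: it only needs the equality case of $\mathcal{E}_L(|w|,|w|)\le\mathcal{E}_L(w,w)$, already used for Proposition~\ref{constant sign e-1}. Your first step also makes explicit something the paper uses tacitly, namely that every eigenfunction of $\lambda_1$ can be normalized into $M$. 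The paper simply asserts $\int_\Omega\omega z^2=1$.

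Your hedging is unnecessary. Theorem~\ref{Hardy-inequ} with \eqref{assump w2} gives $\mathcal{E}_L(w,w)\ge\alpha\|w\|_{L^2(\Omega)}^2+\lambda_0J(w)$ for every $w\in\h(\Omega)$, with no sign condition on $\lambda_0$. For a nonzero eigenfunction $w$ of $\lambda_1$ this reads $(\lambda_1-\lambda_0)J(w)\ge\alpha\|w\|_{L^2(\Omega)}^2>0$. Since $\lambda_0\le\lambda_1$ is already part of Theorem~\ref{exist-eigenval-L}, this yields $J(w)>0$, and in fact $\lambda_1>\lambda_0$. No positivity of $\lambda_1$ is needed. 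You can therefore drop the case analysis on $t\mapsto J(u-tv)$. You can also drop the fallback via $t^\ast$, which relies on $\omega\in L^\infty(\Omega)$, the strong maximum principle and boundary comparability, none of which is available under \eqref{assump w1} alone.
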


\begin{proof}
    Let $u, v$ be two eigenfunctions corresponding to $\lambda_1.$ Since $u,  v$ do not change sign in $\Omega,$ we can assume $u\geq 0$ and $v\geq 0$ {a.e.} in $\Omega.$ Set $z= \l(\frac{u^2+v^2}{2}\r)^{\frac{1}{2}} \in \mathbb H(\Omega)$. It is easy to see that $\into \omega(x)  z^2=1,$ {{\it i.e.,} $z\in M$.}
    Using \cite[Lemma 13]{Lindgren-Lindqvist}, we have
    \[|z(y)-z(x)|^2 \leq \frac{1}{2}|u(y)-u(x)|^2 + \frac{1}{2}|v(y)-v(x)|^2\] with equality only {when $(x,y)\in \R^N\times \R^N$} satisfies $u(x)v(y)=u(y)v(x).$
   
    By Cauchy-Schwarz's inequality, we have
    \[u(x)u(y)+v(x)v(y) \leq \l(u^2+v^2\r)^{\frac{1}{2}}(x)\l(u^2+v^2\r)^{\frac{1}{2}}(y),\]
    {\it i.e.},
    \[\frac{u(x)u(y)}{2}+\frac{v(x)v(y)}{2} \leq z(x)z(y),\]
    with equality only {when $(x,y)\in \R^N\times\R^N$} satisfies $u(x)=\alpha u(y)$ and $v(x)=\alpha v(y),$ $\alpha\in \R.$
    Thus,
    \[
    \begin{split}
        \mathcal{E}_L(z,z) &= \frac{c_N}{2}\inta \frac{|z(x)-z(y)|^2}{|x-y|^N} ~dx ~dy -c_N \intb \frac{z(x)z(y)}{|x-y|^N} ~dx ~dy\\
        &\qquad \qquad \qquad\qquad\qquad+\rho_N \int_{\R^N} |z|^2 ~dx\\
        &\leq \frac{c_N}{2}\l(\frac{1}{2}\inta \frac{|u(x)-u(y)|^2}{|x-y|^N} ~dx ~dy+ \frac{1}{2}\inta \frac{|v(x)-u(y)|^2}{|x-y|^N} ~dx ~dy\r)\\
        & \quad -\frac{c_N}{2}\l(\frac{1}{2}\intb \frac{u(x)u(y)}{|x-y|^N} ~dx ~dy+\frac{1}{2}\intb \frac{v(x)v(y)}{|x-y|^N} ~dx ~dy \r)\\
        & \qquad \qquad \qquad +\rho_N \int_{\R^N} \l(\frac{u^2}{2}+\frac{v^2}{2}\r)~dx\\
        &= \frac{\mathcal{E}_L(u,u)}{2}+ \frac{\mathcal{E}_L(v,v)}{2}= \frac{\lambda_1}{2}+\frac{\lambda_1}{2} = \lambda_1.
    \end{split}
    \]
    Moreover, by the definition of $\lambda_1$, we get $\lambda_1 \leq \mathcal{E}_L(z,z) \leq \lambda_1,$ which implies, $\mathcal{E}_L(z,z)=\lambda_1.$ Hence, \[\frac{u(x)}{u(y)}=\frac{v(x)}{v(y)}=\alpha,\] which further implies, $u(x)=\alpha v(x).$ This proves that $\lambda_1$ is simple. 
    \end{proof}

\begin{proposition}
 Assume \eqref{assump w0}-\eqref{assump w2} hold true and $\omega \in L^\infty(\Omega)$. The first eigenvalue  $\lambda_1$ is isolated.
\end{proposition}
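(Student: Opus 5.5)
We argue by contradiction in the standard way. First, $\lambda_1$ is isolated from below, because every eigenvalue $\lambda$ with eigenfunction $u$ normalised so that $u\in M$ satisfies $\lambda=\mathcal{E}_L(u,u)\ge\lambda_1$ by Remark \ref{Rayleigh-char}. Eigenfunctions with $\int_\Omega\omega u^2\le 0$ would need separate treatment: we would restrict to eigenvalues in a small neighbourhood of $\lambda_1$, or note that such $\lambda$ cannot accumulate at $\lambda_1$.

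So suppose there are eigenvalues $\mu_n\neq\lambda_1$ with $\mu_n\downarrow\lambda_1$, and let $u_n\in M$ be corresponding eigenfunctions. Then $\mathcal{E}_L(u_n,u_n)=\mu_n$. The same Hardy-type argument used in Lemma \ref{ps-lem} (Theorem \ref{Hardy-inequ}, \eqref{assump w2}, \eqref{E-L-bd-below-above}) bounds $\|u_n\|_{L^2(\Omega)}$ and hence $\|u_n\|_{\h(\Omega)}$ uniformly.

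Next we extract a limit. Up to a subsequence, $u_n\rightharpoonup u$ in $\h(\Omega)$ and $u_n\to u$ in $L^\eta(\Omega)$ and in $L^2(\Omega)$ by Theorem \ref{cpct-embed}. Lemma \ref{limit passage} gives $\int_\Omega\omega u^2=1$, so $u\in M$. Weak lower semicontinuity together with \eqref{est:lower:conv} gives $\mathcal{E}_L(u,u)\le\liminf\mu_n=\lambda_1$. Thus $u$ is a minimizer, and by Proposition \ref{simplicity} and Proposition \ref{constant sign e-1}, $u=\pm e_1$; we may take $u=e_1>0$. Testing the equation for $u_n$ with $u_n-u$ and using the (S)-property (as in Lemma \ref{ps-lem}) upgrades this to strong convergence $u_n\to e_1$ in $\h(\Omega)$, although only the $L^2$ convergence is really needed below.

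On the other hand, each $u_n$ changes sign by Theorem \ref{sign-changing}, since $\omega\in L^\infty(\Omega)$ and $\mu_n\neq\lambda_1$. The nodal estimate, Theorem \ref{nodal estimate}, applied with $\vartheta_1=\infty$-type (i.e.\ $\omega\in L^\infty$) and $\vartheta_2$ chosen so that $\vartheta_2^{-1}(1/s)\to\infty$ as $s\to0$, yields a lower bound $|\Omega^-_{u_n}|\ge c>0$ uniform in $n$. This uses that $|\mu_n|$ is bounded, and that for small measure the factor $1-\max\{c_N|\Omega^-|-\rho_N,0\}S_2$ is at least $1/2$. If the Orlicz choice is awkward, we would instead redo the proof of Theorem \ref{nodal estimate} directly with $L^\infty$ weights: \eqref{est-4} for $u_n^-$ gives
\[
\mathcal{E}(u_n^-,u_n^-)\le \big(|\mu_n|\|\omega\|_\infty+\max\{c_N|\Omega|-\rho_N,0\}\big)\|u_n^-\|^2_{L^2(\Omega^-_{u_n})},
\]
and then combine the $L^\varphi$ embedding (or Hölder with the $L^\varphi$ norm) with the fact that $\|u_n^-\|_{L^2(A)}^2\le C\,\|u_n^-\|_{L^\varphi}^2/\ln(1/|A|)$ to force $|\Omega^-_{u_n}|$ away from $0$.

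Finally, since $u_n\to e_1$ in $L^2$, hence in measure, and $e_1>0$ a.e.\ (by \cite[Theorem 1.1]{Jarohs-Weth} as in Theorem \ref{sign-changing}), we get $|\Omega^-_{u_n}|=|\{u_n<0\}|\to0$ by Egorov's theorem. This contradicts the uniform lower bound. The main obstacle is obtaining the uniform positive lower bound on $|\Omega^-_{u_n}|$, i.e.\ making the nodal inequality produce a genuine measure bound under the indefinite quadratic form. This is where the smallness term $c_N|\Omega^-|-\rho_N$ and the logarithmic-gain embedding must be handled carefully.
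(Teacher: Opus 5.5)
Your overall strategy matches the paper's. You argue by contradiction with eigenpairs $(\mu_n,u_n)$, $u_n\in M$, $\mu_n\to\lambda_1^+$, get $\h(\Omega)$-bounds from Theorem \ref{Hardy-inequ} and \eqref{assump w2}, and identify the limit as $\pm e_1$ by simplicity. You then contradict a uniform lower bound on $|\Omega^{\mp}_{u_n}|$ (nonempty by Theorem \ref{sign-changing}) with $u_n\to\pm e_1$ in measure.

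\textbf{The problem is in your main line.} You claim that for small measure the factor $1-\max\{c_N|\Omega^-_{u_n}|-\rho_N,0\}S_2$ is at least $1/2$. This is false when $\rho_N<0$, which is the case $N=1$, where $\rho_1=-2\gamma$.
\begin{itemize}
\item As $|\Omega^-_{u_n}|\to0$ the factor tends to $1+\rho_N S_2$.
\item $S_2$ is the $L^2$-embedding constant of all of $\Omega$, and it is unbounded as $\Omega$ grows.
\item So the factor can be negative, and then Theorem \ref{nodal estimate} gives no measure bound at all.
\end{itemize}
The paper's own ending has the same weakness: its final inequality $1\le\max\{-\rho_N,0\}S_2$ is a contradiction only when $\rho_N\ge0$, i.e.\ $N\ge2$.

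\textbf{Your fallback is the correct repair and should be the main argument.} Applying \eqref{est-4} to $-u_n$ together with \eqref{E-L-bd-below-above} gives
$\mathcal{E}(u_n^-,u_n^-)\le K\|u_n^-\|_{L^2(\Omega)}^2$, where $K=|\mu_n|\|\omega\|_{L^\infty(\Omega)}+\max\{c_N|\Omega|-\rho_N,0\}$ is bounded in $n$.

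Next use the small-set bound $\int_A v^2\,dx\le C\|v\|_{L^\ph(\Omega)}^2/\ln(1/|A|)$ for $|A|$ small. To prove it, take $\|v\|_{L^\ph(\Omega)}=1$, so that $\int_\Omega v^2\ln(e+|v|)\,dx\le1$, and split at the level $T^2=1/(|A|\ln(1/|A|))$.

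Since $u_n^-\neq0$, combining these with the $L^\ph$ embedding yields $\ln(1/|\Omega^-_{u_n}|)\le CKS_1$, a uniform lower bound on $|\Omega^-_{u_n}|$. This works for every $N$, because it replaces the fixed $S_2$ by a constant that vanishes with $|A|$.

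\textbf{Normalization of eigenfunctions.} Your concern about eigenfunctions with $\int_\Omega\omega u^2\,dx\le0$ (which the paper passes over) is settled by the same Hardy bound. Every eigenpair $(\beta,u)$ satisfies $(\beta-\lambda_0)\int_\Omega\omega u^2\,dx\ge\alpha\|u\|_{L^2(\Omega)}^2>0$. Also $\lambda_1\ge\lambda_0+\alpha\|e_1\|_{L^2(\Omega)}^2>\lambda_0$. Hence every eigenvalue near $\lambda_1$ has eigenfunctions that can be normalized into $M$.
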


\begin{proof}
By Proposition \ref{constant sign e-1} and \cite[Theorem 1.1]{Jarohs-Weth},  we get $e_1>0$ in $\Omega.$ Suppose $\lambda_1$ is not isolated. Then, there exists a sequence of eigenvalues $(\beta_n)_{n\in\N}$ such that $\beta_n \to \lambda_1$ with $\beta_n>\lambda_1$, for every $n\in\N.$ {Let $(u_n)_{n\in\N}\subseteq M$ be a sequence of eigenfunctions corresponding to $(\beta_n)_{n \in\N}$ {\it i.e.} for any $\phi \in \h(\Omega)$
\[
\mathcal{E}_L(u_n, \phi) = \beta_n \int_{\Omega} \omega(x) u_n \phi ~dx \quad \text{and} \quad \mathcal{E}_L(u_n,u_n)=\beta_n\into \omega(x) u_n^2 ~dx = \beta_n \overset{n \to \infty}{\to} \lambda_1. \] 
Using Theorem \ref{Hardy-inequ} and \eqref{assump w2}, for $\epsilon>0,$ there exists a $\tilde{N}(\epsilon) \in \N$ such that 
    \[
    \begin{split}
    \lambda_1+\epsilon \geq \mathcal{E}_L(u_n,u_n)&\geq\int_{\R^N} u_n^2 \ln\l(\frac{1}{|x|^2}\r) ~dx+\l(\ln4+2\psi(\frac{N}{4})\r)\int_{\R^N} u_n^2 ~dx\\&\geq \alpha\into u_n^2 ~dx+\lambda_0,
    \end{split}
    \]
when $n\geq\tilde{N}(\epsilon).$ Hence, $\|u\|_{L^2(\Omega)}^2 \leq \hat{C}$ where $\hat{C}>0$ is a constant depending on $\epsilon, \lambda_1, \alpha,\lambda_0.$ Moreover, in view of \eqref{E-L-bd-below-above}}, we obtain $(u_n)_{n \in \N}$ is bounded in $\h(\Omega)$. Therefore, up to a subsequence, by Theorem \ref{cpct-embed}, we get 
$ u_n\rightharpoonup u \text{ in } \h(\Omega)
\ \text{and}\
u_n\to u$ in $L^\eta(\Omega)
$ and a.e. in $\Omega.$ Note that, for any $\phi \in \h(\Omega)$, $w \mapsto \mathcal{E}_L(w, \phi)$ is a bounded linear functional on $\h(\Omega)$. Since $u_n\rightharpoonup u$ in $\h(\Omega)$, we obtain
\[
\mathcal{E}_L(u_n, \phi) \to \mathcal{E}_L(u, \phi) \quad \text{for every} \ \phi \in \h(\Omega).
\]

Finally, by Lemma \ref{est:lower-order-term} and Lemma \ref{limit passage} (i), we can conclude that the limit $u$ lies in $M$ and satisfies \[\mathcal{E}_L(u,u) = \lambda_1 \into \omega(x) u^2~dx,\] {{\it i.e.,} $u$ is an eigenfunction associated to $\lambda_1$. Now, by Proposition \ref{simplicity}, we know that $\lambda_1$ is simple, thus, $u \in$ span$\{e_1\}.$ As $u\in M$, it follows that $u=\pm e_1$. Since $e_1>0$ in $\Omega$, $u$ is either strictly positive or strictly negative in $\Omega.$  By Theorem \ref{nodal estimate}, we have
\begin{equation}\label{eq}1\leq 2S_1|\lambda|\|\omega\|_{L^{\vartheta_1}(\Omega)}\l(\frac{1}{\vartheta_2^{-1}(\frac{1}{|\Omega^\pm_n|})}\r) + \max\{(c_N|\Omega^+_n|-\rho_N),0\} S_2, \end{equation} where $\Omega^\pm_n$ is the nodal domain (defined in \eqref{def:posit-negat}) corresponding to each $u_n.$ Since $u \in \{e_1, -e_1\}$, then either $\lim\limits_{n \to \infty} \Omega^+_n =0$ or $\lim\limits_{n \to \infty} \Omega^-_n =0$. Using the fact that $\vartheta_2$ is increasing and $\lim\limits_{t \to\infty} \vartheta_2(t)=\infty$, passing $n \to \infty$ in \eqref{eq}, we get \[1\leq \max\{-\rho_N ,0\}S_2<0,\] 
this contradiction shows that $\lambda_1$ is isolated.
}  
\end{proof}

\subsection{On the second eigenvalue}
In this part of this work, we give an alternative variational characterization to the second eigenvalue $\lambda_2$ of \eqref{equ} analogous to that introduced in \cite{Iannizzotto} for the fractional Laplacian case.

\begin{proposition}
\label{isolate-alter}
  It holds that 
  \[\lambda_2=\min\{\lambda>\lambda_1:\lambda \text{ is an eigenvalue of } \eqref{equ}\}.\]
\end{proposition}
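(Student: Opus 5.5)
The plan has two parts. First, show that $\lambda_2>\lambda_1$, so that $\lambda_2$ is itself an eigenvalue strictly above $\lambda_1$. Second, show that every eigenvalue $\lambda>\lambda_1$ satisfies $\lambda\geq\lambda_2$. Together these give that $\lambda_2$ is the minimum of the set in question.

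\textbf{Step 1: $\lambda_2>\lambda_1$.} By Theorem \ref{exist-eigenval-L}, $\lambda_2$ is an eigenvalue and $\lambda_2\geq\lambda_1$. Suppose instead that $\lambda_2=\lambda_1$. Let $K:=\{u\in M: I'|_M(u)=0,\ I(u)=\lambda_1\}$ be the critical set at level $\lambda_1$.
\begin{itemize}
\item By Proposition \ref{simplicity} and the normalization $J=1$, we have $K=\{\pm e_1\}$, so $\gamma(K)=1$ by Lemma \ref{genus-prop}(i).
\item We then run the standard Lusternik--Schnirelman multiplicity argument. Lemma \ref{ps-lem} makes $K$ compact. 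Take a symmetric closed neighbourhood $N_\delta$ of $K$ with $\gamma(N_\delta)=1$, and apply the deformation Lemma \ref{def-lem} on $M\setminus N_\delta$ to obtain an odd homeomorphism $\Theta$.
\item Choose $A\in F_2$ with $\sup_A I\le\lambda_1+\eps$. By subadditivity of the genus, $\gamma(\overline{A\setminus N_\delta})\ge 1$, so $\Theta(\overline{A\setminus N_\delta})\in F_1$.
\item This set satisfies $\sup I\le \lambda_1-\eps$, which contradicts the definition of $\lambda_1$.
\end{itemize}
Subadditivity of the genus is a standard property not listed in Lemma \ref{genus-prop}; I would quote it from \cite{Rabinowitz}.

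\textbf{Step 2: any eigenvalue $\lambda>\lambda_1$ satisfies $\lambda\ge\lambda_2$.} Let $u$ be an eigenfunction for $\lambda$ and write $u=u^+-u^-$.

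First, testing the weak formulation with $u^\pm$ gives the two identities
\[
\mathcal{E}_L(u^+,u^+)-B=\lambda J(u^+),\qquad \mathcal{E}_L(u^-,u^-)-B=\lambda J(u^-),\qquad B:=\mathcal{E}_L(u^+,u^-).
\]

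Second, $B\le0$. Since $u^+u^-=0$, the kernel term contributes $(u^+(x)-u^+(y))(u^-(x)-u^-(y))=-u^+(x)u^-(y)-u^+(y)u^-(x)\le0$. The $j$-term contributes $-c_N\iint u^+(x)u^-(y)j\le0$. The $\rho_N$-term vanishes.

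Third, for $w=au^+-bu^-$ one expands
\[
I(w)=a^2\mathcal{E}_L(u^+,u^+)+b^2\mathcal{E}_L(u^-,u^-)-2abB=\lambda J(w)+B(a-b)^2\le \lambda J(w).
\]

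Fourth, $J(u^\pm)>0$. Theorem \ref{Hardy-inequ} combined with \eqref{assump w2} gives $\mathcal{E}_L(v,v)\ge\lambda_0J(v)+\alpha\|v\|_{L^2}^2$ for every $v\in\h(\Omega)$. Applied to $v=u^+\ne0$, this yields $(\lambda-\lambda_0)J(u^+)\ge\alpha\|u^+\|_{L^2}^2>0$. Since $\lambda>\lambda_1\ge\lambda_0$, it follows that $J(u^+)>0$, and likewise $J(u^-)>0$.

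Finally, since $J(au^+-bu^-)=a^2J(u^+)+b^2J(u^-)$, the set
\[
A:=\{au^+-bu^-: J(au^+-bu^-)=1\}
\]
is the image of $\mathbb S^1$ under an odd homeomorphism, in a two-dimensional space. Hence $\gamma(A)=2$, so $A\in F_2$, and $\lambda_2\le\sup_A I\le\lambda$.

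\textbf{Main obstacle.} Step 2 needs both $u^+\not\equiv0$ and $u^-\not\equiv0$, i.e. that eigenfunctions for $\lambda>\lambda_1$ change sign. When $\omega\in L^\infty(\Omega)$ this is exactly Theorem \ref{sign-changing}. For general $\omega$ satisfying \eqref{assump w1}, I would first try to avoid the regularity needed there. Suppose $u\ge0$ is an eigenfunction with $\lambda>\lambda_1$. The identity above with $u^-=0$ gives $I(u)=\lambda$, and $J(u)>0$, so after normalization $u\in M$. I would then try to compare $u$ with $e_1$ via the hidden-convexity inequality of \cite[Proposition 5.2]{Arora-Giacomoni-Vaishnavi}, applied to truncations $\min\{u,k\}$ and $e_1+\eps$ in place of $u,e_1$ to avoid needing $L^\infty$ bounds and two-sided ratio bounds. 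Making this truncation step work without the regularity theory is where I expect the real difficulty to lie.
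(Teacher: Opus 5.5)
\textbf{Step 1.} Your argument for $\lambda_1<\lambda_2$ is the standard Lusternik--Schnirelman multiplicity argument and is sound. One correction: Lemma \ref{def-lem} as stated requires $\lambda_1$ to be a regular value of $I|_M$, which it is not. You need the version of the deformation lemma that deforms $\{I\le c+\eps\}\setminus N$ into $\{I\le c-\eps\}$ for a neighbourhood $N$ of the critical set $K_c$. That version is available here: $I$ and $J$ are continuous quadratic forms on the Hilbert space $\h(\Omega)$, so $M$ is a smooth submanifold, and the Palais--Smale condition holds by Lemma \ref{ps-lem}.

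The paper avoids deformations altogether. Using compactness of almost-minimizing sequences and simplicity of $\lambda_1$, it shows that for large $n$ every $A_n\in F_2$ with $\sup_{A_n}I\le\lambda_1+1/n$ splits into two disjoint relatively open pieces, one near $e_1$ and one near $-e_1$. This gives an odd continuous map $A_n\to\{\pm1\}$, hence $\gamma(A_n)\le 1$. Both routes use the same two inputs; the paper's is more self-contained.

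Note also that the paper's proof of this proposition stops at $\lambda_1<\lambda_2$. The absence of eigenvalues in $(\lambda_1,\lambda_2)$ is obtained only for $\omega\in L^\infty(\Omega)$, inside the proof of Proposition \ref{sec-alter}, by the same $u^\pm$ construction as your Step 2.

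\textbf{Step 2: the gap.} As written, Step 2 proves $\lambda\ge\lambda_2$ only when eigenfunctions for $\lambda>\lambda_1$ are known to change sign. That is available only for bounded weights, via Theorem \ref{sign-changing}. For the weights allowed by \eqref{assump w1} your truncation plan is left open, so the statement is not proved in the generality claimed.

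The missing idea is that the problem is linear, so sign changes are irrelevant. Let $u$ be an eigenfunction for $\lambda>\lambda_1$.
\begin{itemize}
\item Test the equation of $e_1$ with $u$ and that of $u$ with $e_1$, and use the symmetry of $\mathcal{E}_L$. This gives $\lambda_1\into\omega e_1u\,dx=\mathcal{E}_L(e_1,u)=\lambda\into\omega e_1u\,dx$, hence $\into\omega e_1u\,dx=0=\mathcal{E}_L(e_1,u)$.
\item Your own fourth step, applied to $u$ itself (where $\mathcal{E}_L(u,u)=\lambda J(u)$), gives $(\lambda-\lambda_0)J(u)\ge\alpha\|u\|_{L^2(\Omega)}^2>0$, so $J(u)>0$.
\item Therefore, for $w=ae_1+bu$,
\[
J(w)=a^2+b^2J(u),\qquad I(w)=a^2\lambda_1+b^2\lambda J(u)\le\lambda J(w).
\]
\item The functions $e_1$ and $u$ are linearly independent, since $\lambda\neq\lambda_1$ and $J(u)>0$. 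So $\{w\in\mathrm{span}\{e_1,u\}: J(w)=1\}$ is an ellipse in a plane, of genus $2$. It therefore belongs to $F_2$, and $\lambda_2\le\lambda$.
\end{itemize}
This holds for every weight satisfying \eqref{assump w0}--\eqref{assump w2}, with no regularity or sign information needed. The $u^\pm$ device is what one needs for nonlinear operators such as the $p$-Laplacian, where $I(ae_1+bu)$ cannot be expanded. Here it only introduces an unnecessary dependence on $\omega\in L^\infty(\Omega)$.
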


\begin{proof}
    From Theorem \ref{exist-eigenval-L}, we have $\lambda_1 \leq \lambda_2$. On contrary, assume that $\lambda_1 = \lambda_2$. By \eqref{lambda}, for any $n \in \mathbb{N},$ we can find $A_n \in F_2$ such that
\[
\sup_{u \in A_n} \mathcal{E}_L(u,u) \leq \lambda_1 + \frac{1}{n}.
\]
For all $\rho > 0$, we define two relatively open subsets of $M$ as
\[
\mathcal{S}^\pm_\rho =
\l\{ u \in M :
\l| \into \omega(x)(u \pm e_1)^2 ~dx \r| < \rho \r\}
\]
where $e_1$ is first eigenfunction corresponding to the eigenvalue $\lambda_1$. Next, we two cases: \newline
\textbf{Case 1:} If there exists a sequence $(\rho_n)_{n\in \N}$ such that $\rho_n \to 0^+$ as $n \to \infty$ and for all $n \in \N$,
\[
\{ u \in M: \mathcal{E}_L(u,u)\leq \lambda_1 + 1/n \}
\subseteq \mathcal{S}^+_{\rho_n} \cup \mathcal{S}^-_{\rho_n}.
\]
Since $A_n$ is symmetric, we have $A_n \cap \mathcal{S}^\pm_{\rho_n} \neq \emptyset$ and there exists a $n_0 \in \mathbb{N}$ such that
\[
A_n \cap \mathcal{S}^+_{\rho_n} \cap \mathcal{S}^-_{\rho_n} = \emptyset \quad \ n \geq n_0.
\]
If the above is not true, there would exist a sequence $(u_n)_{n\in \N}\subseteq M$ such that for all $n \in \N$
\begin{equation}\label{est:lambda2-chara}
    \mathcal{E}_L(u_n,u_n) \leq \lambda_1 + \frac{1}{n}
\quad 
\text{and}\quad
\l| \into \omega(x)(u_n \pm e_1)^2 ~dx \r| < \rho_n.
\end{equation}
Using \eqref{E-L-bd-below-above}, \eqref{assump w2} and \eqref{est:lambda2-chara}, we see that $(u_n)_{n\in \N}$ is bounded in $\h(\Omega)$, hence upto a subsequence, we have $u_n \rightharpoonup u$ in $\h(\Omega)$. Now, by using the weak lower semicontinuity of $\h(\Omega)$ norm, Lemma \ref{limit passage} (i) and Theorem \ref{cpct-embed}, we obtain
\[
\mathcal{E}_L(u,u) \leq \lambda_1 \quad \text{and} \quad
\into \omega(x) u^2 ~dx = \lim_{n \to \infty} \into \omega(x) u_n^2 ~dx = 1.
\]
This further implies in view of Proposition \ref{simplicity}, either $u = e_1$ or $u = -e_1$.
 Let $u = e_1$. Using Lemma \ref{limit passage} (i) and passing to the limit in $\l|\into \omega(x)(u_n + e_1)^2 ~dx\r|<\rho_n$, we get
\[
4= \l(\into \omega(x) \left(u^2 + e_1^2 \right) ~dx +2\into \omega(x) ue_1 ~dx\r)
= \lim_{n \to \infty} \into \omega(x) (u_n + e_1)^2 ~dx = 0,
\]
which is a contradiction. The same conclusion follows if $u=-e_1$. Therefore, no element of $M$ can be simultaneously close to both $e_1$ and $-e_1$ and thus we have split $A_n$ into two parts. Let
\[
\ph(u) :=
\begin{cases}
1 & \text{if } u \in A_n \cap \mathcal{S}^+_{\rho_n}\\
-1 & \text{if } u \in A_n \cap \mathcal{S}^-_{\rho_n}.
\end{cases}
\]
 Then, $\ph:A_n \to \R\setminus\{0\}$ is a odd, continuous map. Using Definition \ref{genus-def}, we get $\gamma(A_n)\leq 1$,
contradicting $A_n \in F_2$.
\newline
\textbf{Case 2:} Otherwise, assume that there exists a $\rho_0>0$ and a sequence $(u_n)_{n\in \N}\subseteq M$ such that
\[
\mathcal{E}_L(u_n,u_n) \leq \lambda_1 + \frac{1}{n}
\quad 
\text{and} \quad
\l| \into \omega(x)(u_n \pm e_1)^2 dx \r| > \rho_0.
\]
Using the same reasoning as above, we get the existence of $u\in \h(\Omega)$ such that $u_n \rightharpoonup u$ in $\h(\Omega)$ and that $u=\pm e_1$. Taking $u=e_1$ (for $u=-e_1$, the proof follows analogously) and passing to the limit via Lemma \ref{limit passage} (i) in $\into \omega(x) (u_n-e_1) ^2 ~dx$, we get
\[0\geq \rho_0>0,\] a contradiction.
Thus, $\lambda_1<\lambda_2.$
\end{proof}
\begin{proposition}
\label{sec-alter}
Let $\omega\in L^\infty(\Omega)$. The following is true
    \[\lambda_2:=\inf_{A\in F_2} \sup_{u \in A} \mathcal{E}_L(u,u)=\inf_{f\in \mathcal{C}} \max_{\delta\in \mathbb{S}^1} \mathcal{E}_L(f(\delta),f(\delta))=:\tilde{\lambda}_2,\]
    where $\mathcal{C}$ denotes the set of all odd continuous maps from $\mathbb{S}^1$ to $M.$
\end{proposition}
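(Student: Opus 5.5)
The plan is to prove the two inequalities $\lambda_2\le\tilde\lambda_2$ and $\tilde\lambda_2\le\lambda_2$ separately. The first follows from genus properties. For the second, I would build an explicit odd loop in the plane spanned by $e_1$ and a $\lambda_2$-eigenfunction, using the strict inequality $\lambda_1<\lambda_2$ from Proposition \ref{isolate-alter}.

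\emph{Step 1: $\lambda_2\le\tilde\lambda_2$.}
\begin{itemize}
\item Fix $f\in\mathcal{C}$ and set $A:=f(\mathbb{S}^1)\subseteq M$.
\item $A$ is compact, hence closed in $\h(\Omega)$.
\item $A$ is symmetric because $f$ is odd.
\item $0\notin A$, since $J(0)=0\neq1$.
\item $f:\mathbb{S}^1\to A$ is odd and continuous, so Lemma \ref{genus-prop} (ii) gives $\gamma(A)\ge\gamma(\mathbb{S}^1)=2$. This uses the classical fact that the genus of the unit circle in $\R^2$ equals $2$, by Borsuk--Ulam.
\end{itemize}
Hence $A\in F_2$ and
\[\lambda_2\le\sup_{u\in A}\mathcal{E}_L(u,u)=\max_{\delta\in\mathbb{S}^1}\mathcal{E}_L(f(\delta),f(\delta)).\]
Taking the infimum over $f\in\mathcal{C}$ yields $\lambda_2\le\tilde\lambda_2$.

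\emph{Step 2: $\tilde\lambda_2\le\lambda_2$.} By Theorem \ref{exist-eigenval-L}, there is $u_2\in M$ with $\mathcal{E}_L(u_2,\phi)=\lambda_2\into\omega u_2\phi\,dx$ for all $\phi\in\h(\Omega)$. We also have $e_1\in M$, an eigenfunction for $\lambda_1$.

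First, $e_1$ and $u_2$ are orthogonal in both forms. Testing the equation of $e_1$ with $u_2$, and that of $u_2$ with $e_1$, and using the symmetry of $\mathcal{E}_L$, gives
\[\lambda_1\into\omega e_1u_2\,dx=\mathcal{E}_L(e_1,u_2)=\lambda_2\into\omega e_1u_2\,dx.\]
Since $\lambda_1<\lambda_2$ by Proposition \ref{isolate-alter}, it follows that $\into\omega e_1u_2\,dx=0$, and hence also $\mathcal{E}_L(e_1,u_2)=0$. In particular $e_1$ and $u_2$ are linearly independent.

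Next, define $f(\cos\theta,\sin\theta):=\cos\theta\,e_1+\sin\theta\,u_2$. This map is continuous and odd. By the orthogonality relations,
\[J(f(\delta))=\cos^2\theta+\sin^2\theta=1,\]
so $f\in\mathcal{C}$. Likewise
\[\mathcal{E}_L(f(\delta),f(\delta))=\lambda_1\cos^2\theta+\lambda_2\sin^2\theta\le\lambda_2.\]
This last bound uses only $\lambda_1\le\lambda_2$, so it holds regardless of the signs of $\lambda_1$ and $\lambda_2$. Hence $\tilde\lambda_2\le\max_{\mathbb{S}^1}\mathcal{E}_L(f,f)\le\lambda_2$.

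\emph{Main obstacle.} The only delicate point is the orthogonality $\into\omega e_1u_2\,dx=0$. It relies on $\lambda_1\neq\lambda_2$, which is exactly Proposition \ref{isolate-alter}. Without it one would fall back on the usual path construction through $u_2^\pm$ and $e_1$. That route needs $u_2$ to change sign (Theorem \ref{sign-changing}, which is where $\omega\in L^\infty(\Omega)$ would enter) and $\into\omega(u_2^\pm)^2\,dx>0$, and the latter is problematic when $\lambda_2\le0$. I would therefore use the two-dimensional span construction above. I would also double-check that the genus of $\mathbb{S}^1$ is $2$ within the framework of Definition \ref{genus-def}, citing Rabinowitz if needed.
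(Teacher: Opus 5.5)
Your proof is correct. Step 1 is the same as the paper's, but Step 2 takes a genuinely different route.

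\textbf{The paper's Step 2.} It takes an arbitrary eigenvalue $\lambda>\lambda_1$ with eigenfunction $u$ and uses Theorem \ref{sign-changing} to get $u^\pm\neq 0$. This is where $\omega\in L^\infty(\Omega)$ enters, through the regularity and strong maximum principle results behind that theorem. It then proves $\mathcal{E}_L(u^+,u^-)\le 0$ and uses the normalized loop $\delta\mapsto\delta_1u^+-\delta_2u^-$ to obtain $\tilde\lambda_2\le\lambda$. It also appeals to Cuesta's Proposition 2.7 to show that $\tilde\lambda_2$ is an eigenvalue. That step is not needed for the equality once the loop is applied with $\lambda=\lambda_2$. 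This nodal-decomposition argument is inherited from the $p$-Laplacian literature, where no linear structure is available.

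\textbf{Your Step 2.} You use the linearity of $L_\Delta$ instead. Since $\lambda_1<\lambda_2$, $e_1$ and $u_2$ are orthogonal both for the weighted pairing $\into\omega uv\,dx$ and for $\mathcal{E}_L$. Hence the loop $\theta\mapsto\cos\theta\,e_1+\sin\theta\,u_2$ lies in $M$ with energy $\lambda_1\cos^2\theta+\lambda_2\sin^2\theta\le\lambda_2$.

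\textbf{Comparison.} Your route bypasses Theorem \ref{sign-changing}, the external citation and the normalization question altogether. It therefore proves the identity under \eqref{assump w0}--\eqref{assump w2} alone, without $\omega\in L^\infty(\Omega)$. The paper's route additionally gives $\tilde\lambda_2\le\lambda$ for every eigenvalue $\lambda>\lambda_1$. That is already contained in Proposition \ref{isolate-alter} once the equality is known.

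\textbf{Correction to your closing remark.} Positivity of $\into\omega(u^\pm)^2\,dx$ in the paper's construction is not actually endangered by $\lambda\le0$. Test the equation with $u^+$ and use $\mathcal{E}_L(u^+,u^-)\le0$, Theorem \ref{Hardy-inequ} and \eqref{assump w2}. This gives $(\lambda-\lambda_0)\into\omega(u^+)^2\,dx\ge\alpha\|u^+\|_{L^2(\Omega)}^2>0$, and $\lambda>\lambda_1\ge\lambda_0$; the same works for $u^-$. The paper leaves this unstated, but it is easily filled.
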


\begin{proof}
Fix $f \in \mathcal{C}$ and set $A=f(\mathbb{S}^1)$. 
By \cite[Example 7.4]{Rabinowitz}, we have $\gamma(A)\geq 2$, hence $A\in F_2$. Moreover, since $\mathbb{S}^1$ is compact and $\mathcal{E}_L(f(\cdot),f(\cdot))$ is continuous, we have
\[
\sup_{u\in A}\mathcal{E}_L(u,u)
=
\sup_{\delta\in\mathbb{S}^1}\mathcal{E}_L(f(\delta), f(\delta))
=
\max_{\delta\in\mathbb{S}^1}\mathcal{E}_L(f(\delta), f(\delta)).
\]
Thus,
\[
\inf_{A\in F_2} \sup_{u \in A} \mathcal{E}_L(u,u)
\leq
\sup_{u\in A}\mathcal{E}_L(u,u)
=
\max_{\delta\in\mathbb{S}^1}\mathcal{E}_L(f(\delta),f(\delta)).
\]
Taking the infimum over all $f\in\mathcal{C}$, we obtain
\begin{equation}
\label{sec-ev-1}
\lambda_2\leq \tilde{\lambda}_2.
\end{equation}
Next, we prove that
\[
\tilde{\lambda}_2 = \min \{ \lambda > \lambda_1 : \lambda \text{ is an eigenvalue of \eqref{lambda}} \}.
\]
By \cite[Proposition 2.7]{Cuesta-1}, $\tilde{\lambda}_2$ is an eigenvalue of \eqref{equ}. Using Proposition \ref{isolate-alter} and \eqref{sec-ev-1}, we get $\lambda_1 < \tilde{\lambda}_2$.
Let $\lambda > \lambda_1$ be an eigenvalue and $u$ be a corresponding eigenfunction.
By Theorem \ref{sign-changing}, we have $u^{\pm} \neq 0$.
Next, we claim that
\begin{equation}\label{sign-claim}
    \mathcal{E}_L(u^+,u^-) \leq 0 
\end{equation}

Using \eqref{sign-ineq} and the definition of $\mathcal{E}_L(\cdot, \cdot)$,  we obtain
\begin{align*}
\begin{split}
\mathcal{E}_L(u,u^+)&\geq  \frac{c_N}{2}{\iint\limits_{\substack{x,y\in\R^N \\ \abs{x-y} \leq 1}}} \frac{|u^+(x)-u^+(y)|^2}{|x-y|^N} ~dx ~dy+\rho_N \int_{\R^N} (u^+)^2 ~dx\\&\qquad\qquad-c_N{\iint\limits_{\substack{x,y\in\R^N \\ \abs{x-y} \geq 1}}} \frac{u^+(x)u^+(y)}{|x-y|^N} ~dx ~dy=\mathcal{E}_L(u^+,u^+)
\end{split}
\end{align*}
which implies the required claim. 

Let $\delta=(\delta_1,\delta_2)\in \mathbb{S}^1$.
Taking $v=u^\pm$ in \eqref{weak-formulatn}, using $u=u^+-u^-$ and the fact that $u^+u^-=0$, we get
\[
\mathcal{E}_L(u^+-u^-,u^+)
= \lambda \into \omega(x) (u^+)^2 ~dx, \quad \mathcal{E}_L(u^+-u^-,u^-)
= -\lambda \into \omega(x) (u^-)^2 ~dx.
\]

In above, multiplying the first relation by $\delta_1^2$, the second by $\delta_2^2$ and subtracting, we get
\[\lambda \left( \delta_1^2 \into \omega(x)(u^+)^2 ~dx
+ \delta_2^2 \into \omega(x)(u^-)^2 ~dx \right)
=\delta_1^2\mathcal{E}_L(u, u^+)- \delta_2^2\mathcal{E}_L(u, u^-).\] Now, using the bilinearity of $\mathcal{E}_L$ and \eqref{sign-claim}, we obtain
\[
\begin{split}
\lambda & \left( \delta_1^2 \into \omega(x)(u^+)^2 ~dx
+ \delta_2^2 \into \omega(x)(u^-)^2 ~dx \right) 
=\delta_1^2\mathcal{E}_L(u, u^+)- \delta_2^2\mathcal{E}_L(u, u^-)\\
& =\delta_1^2\mathcal{E}_L(u^+, u^+) - (\delta_1^2 + \delta_2^2)\mathcal{E}_L(u^-, u^+) + \delta_2^2\mathcal{E}_L(u^-, u^-) \\
& \geq \delta_1^2\mathcal{E}_L(u^+, u^+)+\delta_2^2\mathcal{E}_L(u^-, u^-) - 2 \delta_1 \delta_2\mathcal{E}_L(u^-, u^+) \\
& = \mathcal{E}_L((\delta_1 u^+-\delta_2 u^-), (\delta_1 u^+-\delta_2 u^-)).
\end{split}
\]
 Define $f: \mathbb{S}^1 \to M$ as
\[
f(\delta) = \l(\frac{\delta_1 u^+ - \delta_2 u^-}{\left( \delta_1^2 \into \omega(x)(u^+)^2 ~dx
+ \delta_2^2 \into \omega(x)(u^-)^2 ~dx \right)^{1/2}}\r).
\]
Then, $f \in \mathcal{C}$,
\[
\into \omega(x) (f(\delta))^2 ~dx = 1 \quad \text{and} \quad \mathcal{E}_L(f(\delta),f(\delta)) \leq \lambda.
\]
Taking the maximum over $\mathbb{S}^1$ and infimum over $\mathcal{C}$, we obtain
\[
\tilde{\lambda}_2 \leq \lambda.
\]
Combining this with \eqref{sec-ev-1}, we get the claimed result.
\end{proof}

\subsection{Monotonicity of eigenvalues}
In this subsection, we prove that the eigenvalues obtained in Theorem \ref{exist-eigenval-L} are monotonic {\it w.r.t.} the weight function $\omega$ and the domain $\Omega$.
\begin{proposition}
\label{mono-wei}
Let $\omega_1, \ \omega_2$ be weight functions satisfying \eqref{assump w0}-\eqref{assump w2} such that $\omega_1(x)\leq \omega_2(x)$ a.e. in $\Omega$.
Then for every $k\geq1$, we have
\[
\lambda_k(\omega_2)\leq \lambda_k(\omega_1).
\]
\end{proposition}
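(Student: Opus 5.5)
The plan is to compare the minimax levels \eqref{lambda} for the two weights through an explicit odd, continuous map between the constraint manifolds. Write
\[
J_i(u)=\into \omega_i(x)u^2\,dx, \qquad M_i=\{u\in\h(\Omega): J_i(u)=1\}, \qquad F_k^{(i)}=\{A\subseteq M_i:\ A=-A \text{ closed},\ \gamma(A)\ge k\},
\]
for $i=1,2$. Since $\omega_1\le\omega_2$ a.e., every $u\in M_1$ satisfies $J_2(u)\ge J_1(u)=1>0$. Hence the map
\[
h:M_1\to M_2,\qquad h(u)=\frac{u}{\sqrt{J_2(u)}}
\]
is well defined and odd. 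It is continuous on $\h(\Omega)$ by Lemma \ref{limit passage}\,(i) together with Theorem \ref{cpct-embed}, since $J_2$ is continuous along sequences converging in $\h(\Omega)$.

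Fix $k\ge1$ and $\varepsilon>0$, and choose $A\in F_k^{(1)}$ with $\sup_{A}I\le \lambda_k(\omega_1)+\varepsilon$. The set $h(A)$ is symmetric. It is closed in $M_2$: if $h(u_n)\to w$, then $J_2(u_n)^{-1/2}$ stays bounded and bounded away from $0$ (because $J_2\ge1$ on $M_1$ and $I$ is bounded on $A$, so $A$ is bounded in $\h(\Omega)$ by the coercivity estimate used in Lemma \ref{ps-lem}). After passing to a subsequence, the scalars converge and so $u_n\to u\in A$. By Lemma \ref{genus-prop}\,(ii) applied to $h$ and to the inverse scaling $w\mapsto w/\sqrt{J_1(w)}$ on $h(A)$, we get $\gamma(h(A))=\gamma(A)\ge k$. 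Thus $h(A)\in F_k^{(2)}$, and by homogeneity
\[
I(h(u))=\frac{I(u)}{J_2(u)}\qquad\text{for }u\in A .
\]
Whenever $I(u)\ge0$, this gives $I(h(u))\le I(u)$ because $J_2(u)\ge1$. Consequently, if $\sup_A I\ge 0$, then $\lambda_k(\omega_2)\le \sup_{h(A)}I\le \max\{\sup_A I,0\}$, and letting $\varepsilon\to0$ gives the claim whenever $\lambda_k(\omega_1)\ge0$.

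The main obstacle is the case $\lambda_k(\omega_1)<0$. There, the quotient $I(u)/J_2(u)$ with $I(u)<0$ and $J_2(u)>1$ is \emph{larger} than $I(u)$, so plain rescaling goes the wrong way. For this case I would first reformulate both levels on cones. Since $\sup_A I\le\lambda$ on $A\subseteq M_i$ is equivalent to $I(u)\le \lambda J_i(u)$ on the cone $\{tu: t>0,\ u\in A\}$, I would express $\lambda_k(\omega_i)$ as the infimum of those $\lambda$ for which some symmetric set of genus at least $k$ on the unit sphere of $L^2(\Omega)$ satisfies $I-\lambda J_i\le0$ and $J_i>0$ on it. For $\lambda<0$, the inequality $I-\lambda J_1\le0$ transfers to $I-\lambda J_2\le0$ exactly on the part where $J_2=J_1$, so the comparison is automatic only there. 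I would therefore try to choose the almost-optimal set $A$ inside the region where $I<0$, which is nonempty when $\lambda_k(\omega_1)<0$. By Lemma \ref{energy bdd below} applied to $\lambda_0\omega_i$ (note that \eqref{assump w2} with $\lambda_0<0$ is inherited from $\omega_1$ by $\omega_2$ in the needed direction), the quantity $I(u)-\lambda J_2(u)$ stays controlled. I would then attempt to deform $A$ along the scaling $u\mapsto u/\sqrt{J_2(u)}$ combined with the deformation Lemma \ref{def-lem} for $I|_{M_2}$, so as to lower the sup back to $\lambda_k(\omega_1)+\varepsilon$.

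If this deformation step cannot be closed for $\lambda_k(\omega_1)<0$, the argument above still yields the statement for every $k$ with $\lambda_k(\omega_1)\ge0$. By Theorem \ref{exist-eigenval-L}, $\lambda_k\to+\infty$, so this covers all but finitely many $k$, and the remaining finitely many negative levels are where the genuine difficulty lies.
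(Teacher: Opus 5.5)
Your argument for the levels with $\lambda_k(\omega_1)\ge 0$ is correct. It uses the same rescaling $u\mapsto u/\sqrt{J_2(u)}$ as the paper, and you are more careful than the paper: you check that the rescaled set is closed, whereas the paper skips this and asserts the false inclusion $B\subseteq A$. The essential issue is the one you flagged. The paper writes $\mathcal{E}_L(v,v)=\mathcal{E}_L(u,u)/\into\omega_2u^2\,dx\le\mathcal{E}_L(u,u)$ for every $u\in M_{\omega_1}$, and this silently requires $\mathcal{E}_L(u,u)\ge0$. For every $k$, your argument actually proves $\lambda_k(\omega_2)\le\max\{\lambda_k(\omega_1),0\}$, because $I(h(u))\le\max\{I(u),0\}$ on $A$.

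The gap is in your plan for $\lambda_k(\omega_1)<0$. No deformation step, or any other argument, can close it, because the proposition is false there. Here is a counterexample.
\begin{itemize}
\item Take $\Omega=B_R$ with $R$ large enough that $\mu_1:=\min\{\mathcal{E}_L(u,u):\ \|u\|_{L^2(\Omega)}=1\}<0$. This is possible because the symbol $2\ln|\xi|$ gives $L_\Delta[u(\cdot/R)]=(L_\Delta u)(\cdot/R)-2\ln R\,u(\cdot/R)$, hence $\lambda_1^L(B_R)=\lambda_1^L(B_1)-2\ln R$.
\item Take the constant weights $\omega_1\equiv1\le\omega_2\equiv2$. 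Both satisfy \eqref{assump w0}--\eqref{assump w2}: for \eqref{assump w2}, choose $\lambda_0<0$ with $|\lambda_0|$ large, using $\ln(1/|x|^2)\ge-2\ln R$ on $B_R$.
\item For a constant weight $c>0$, the map $u\mapsto\sqrt{c}\,u$ is an odd homeomorphism from $M_c$ onto the $L^2$-unit sphere of $\h(\Omega)$. It preserves genus and satisfies $I(\sqrt c\,u)=c\,I(u)$. Hence $\lambda_k(c)=\mu_k/c$ for every $k$, where $\mu_k$ are the unweighted Lusternik--Schnirelman levels.
\item Therefore $\lambda_1(\omega_2)=\mu_1/2>\mu_1=\lambda_1(\omega_1)$. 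The same reversal $\lambda_k(\omega_2)>\lambda_k(\omega_1)$ occurs at every $k$ with $\mu_k<0$.
\end{itemize}

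You should drop the deformation idea. The correct statement is the one you actually proved: $\lambda_k(\omega_2)\le\lambda_k(\omega_1)$ whenever $\lambda_k(\omega_1)\ge0$, and in general $\lambda_k(\omega_2)\le\max\{\lambda_k(\omega_1),0\}$. The paper's unrestricted claim needs this hypothesis added.
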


\begin{proof}
Denote $M_{\omega}:= \{u \in \h(\Omega): \into \omega(x) u^2 ~dx =1\}.$ Since $\omega_1\leq \omega_2$, then for $u \in M_{\omega_1}$ we have
\[
1 \leq \into \omega_2(x) u^2 ~dx.
\]
For $u \in M_{\omega_1}$, define
\[
v=\frac{u}{\l(\into \omega_2(x) u^2 ~dx\r)^\frac{1}{2}} \quad \text{such that} \ v \in M_{\omega_2}
\]
Moreover,
\[
\mathcal{E}_L(v,v)=\frac{\mathcal{E}_L(u,u)}{\into \omega_2(x) u^2~dx}\leq \mathcal{E}_L(u,u).
\]
Let $A\subset \{u\in \h(\Omega): \into \omega_1(x) u^2 ~dx=1\}$ be a closed, symmetric set with genus $\gamma(A)\geq k$.
Define
\[
B=\l\{\frac{u}{\l(\into \omega_2(x) u^2~dx\r)^\frac{1}{2}}:u\in A\r\}.
\]
Note that since $A$ is symmetric, $B$ is also symmetric. It is easy to deduce that $B\subseteq A$ and $B\subseteq \{u\in \h(\Omega): \into \omega_2(x) u^2 ~dx=1\}$ is closed. Consider the map $T:A\to \h(\Omega)$ defined as \[T(u)=\frac{u}{\sqrt{\into \omega_2(x)u^2~dx}}.\] It is easy to see that $T$ is odd, continuous and $T(A)=B.$ By Lemma \ref{genus-prop} (ii), we get $\gamma(T(A))=\gamma(B)\geq \gamma(A)\geq k$.
Hence
\[
\sup_{v\in B}\mathcal{E}_L(v,v)\leq \sup_{u\in A}\mathcal{E}_L(u,u).
\]
Taking the infimum over all such sets, we obtain $\lambda_k(\omega_2)\leq \lambda_k(\omega_1).$
\end{proof}

\begin{proposition}
Let $\Omega_1 \subsetneq \Omega_2\subseteq \R^N$ be bounded domains. Then, it holds that
\[
\lambda_k(\Omega_2) \leq \lambda_k(\Omega_1)
\quad \text{for all } k\geq 1 .
\]
\end{proposition}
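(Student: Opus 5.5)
The argument is an inclusion of admissible classes. We use zero extension $\h(\Omega_1)\hookrightarrow\h(\Omega_2)$ and regard the weight on $\Omega_1$ as the restriction of $\omega$, defined on $\Omega_2$. Every $u\in\h(\Omega_1)$ vanishes on $\R^N\setminus\Omega_1\supseteq\R^N\setminus\Omega_2$. Its Gagliardo-type seminorm $\intr|u(x)-u(y)|^2k(x-y)\,dx\,dy$ is computed over all of $\R^N\times\R^N$ and so does not depend on the ambient domain. Hence $u\in\h(\Omega_2)$, and the norm is unchanged.

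The same holds for the whole quadratic form. Each of the three terms in \eqref{quadratic-form} is an integral over $\R^N$ or $\R^N\times\R^N$ of quantities involving $u$ alone, so $\mathcal{E}_L^{\Omega_2}(u,u)=\mathcal{E}_L^{\Omega_1}(u,u)$. Likewise $\int_{\Omega_2}\omega u^2\,dx=\int_{\Omega_1}\omega u^2\,dx$, because $u=0$ on $\Omega_2\setminus\Omega_1$. Therefore the constraint manifold $M(\Omega_1)$ is a subset of $M(\Omega_2)$, and on it $I_{\Omega_2}=I_{\Omega_1}$.

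Next we check that $F_k(\Omega_1)\subseteq F_k(\Omega_2)$. Let $A\in F_k(\Omega_1)$. Then $A$ is symmetric, and $A$ is closed in $\h(\Omega_2)$ because the zero-extension map is an isometry onto the closed subspace of $\h(\Omega_2)$ consisting of functions vanishing a.e. outside $\Omega_1$. The genus is defined intrinsically through odd continuous maps from $A$ into $\R^n\setminus\{0\}$, so it does not change when $A$ is viewed inside the larger space. Alternatively, apply Lemma \ref{genus-prop} (ii) to the inclusion and to its inverse on $A$. Hence $\gamma(A)\ge k$ in $\h(\Omega_2)$ as well.

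Taking the infimum in \eqref{lambda} over the larger family then gives
\[
\lambda_k(\Omega_2)=\inf_{A\in F_k(\Omega_2)}\sup_{u\in A}I(u)\le\inf_{A\in F_k(\Omega_1)}\sup_{u\in A}I(u)=\lambda_k(\Omega_1).
\]
For this comparison to be meaningful, $\omega$ must satisfy \eqref{assump w0}--\eqref{assump w2} on both domains; in particular $\omega^+\not\equiv0$ on $\Omega_1$, so that $M(\Omega_1)\neq\emptyset$ and $F_k(\Omega_1)\neq\emptyset$.

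The only delicate point is the closedness and genus invariance under the embedding, which the isometry argument handles; everything else is formal.
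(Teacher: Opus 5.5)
Your proof is correct and follows the same route as the paper, which observes that the admissible families are nested and compares the infima in \eqref{lambda}. You also supply the details the paper leaves implicit (zero extension is an isometry preserving $\mathcal{E}_L$ and $J$, and closedness and genus are preserved), and your inclusion $F_k(\Omega_1)\subseteq F_k(\Omega_2)$ is the right one: the paper writes $F_{k,\Omega_2}\subset F_{k,\Omega_1}$, which would give the reverse inequality and is evidently a typo.
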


\begin{proof}
Note that for each $k \in \N$, the admissible sets satisfies
\[F_{k, \Omega_2} \subset F_{k, \Omega_1}\]
where $F_{k, \Omega}$ is defined as 
\[F_{k, \Omega}:=\{A\subseteq M_\Omega \,:\, A=-A \ \text{is closed }, \ \gamma(A)\geq k\}\]
and \[
M_{\Omega}:=\{u \in \h(\Omega)\,:\, J(u)=1\}.
\]
Finally, the proof follows from the variational characterization of eigenvalues $\lambda_k$ defined in \eqref{lambda}.
\end{proof}

In contrast, the previous argument cannot, in general, be extended to establish strict decreasing monotonicity, due to the lack of compactness of the sets $A \in F_k$. However, using the alternative characterizations of $\lambda_1$ and $\lambda_2$ established in Theorem \ref{existence lambda1} and Proposition \ref{sec-alter} respectively, we derive the strict monotonicity properties of the first and second eigenvalue $\lambda_1$ and $\lambda_2$ {\it w.r.t.} the bounded weight function $\omega$ and the domain $\Omega$.

\begin{proposition}
Let $\omega_1, \ \omega_2$ be bounded weight functions such that $\omega_1 \leq \omega_2$ in $\Omega$, $\omega_1 \neq \omega_2$. Then,
\[
\lambda_1(\omega_1) > \lambda_1(\omega_2).
\]
\end{proposition}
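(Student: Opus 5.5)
We will use the Rayleigh characterization of the first eigenvalue from Theorem \ref{existence lambda1} and Remark \ref{Rayleigh-char}, together with the strict positivity of the first eigenfunction. Let $e_1 = e_1(\omega_1)$ be the first eigenfunction for the weight $\omega_1$. By Theorem \ref{existence lambda1} it lies in $M_{\omega_1}$ and satisfies $\mathcal{E}_L(e_1,e_1)=\lambda_1(\omega_1)$. By Proposition \ref{constant sign e-1} together with \cite[Theorem 1.1]{Jarohs-Weth} (applicable since $\omega_1\in L^\infty(\Omega)$), we may take $e_1>0$ in $\Omega$.

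The key computation is
\[
t:=\into \omega_2(x) e_1^2~dx = 1 + \into (\omega_2-\omega_1)(x)\, e_1^2~dx > 1.
\]
Strictness holds because $\omega_2-\omega_1\geq 0$ is not a.e. zero, so it is positive on a set of positive measure, and on that set $e_1^2>0$. Set $v=e_1/\sqrt{t}\in M_{\omega_2}$. The Rayleigh characterization for $\omega_2$ then gives
\[
\lambda_1(\omega_2)\leq \mathcal{E}_L(v,v)=\frac{\lambda_1(\omega_1)}{t}.
\]

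To conclude we split into cases according to the sign of $\lambda_1(\omega_1)$, since the Logarithmic Laplacian permits non-positive first eigenvalues. If $\lambda_1(\omega_1)>0$, then $\lambda_1(\omega_1)/t<\lambda_1(\omega_1)$ and the strict inequality follows. If $\lambda_1(\omega_1)\le 0$, dividing by $t>1$ does not decrease the value, so this step is the main obstacle and needs a separate argument.

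For that case, argue by contradiction. Suppose $\lambda_1(\omega_2)=\lambda_1(\omega_1)=:\lambda$; equality is the only alternative, since Proposition \ref{mono-wei} already gives $\le$. Take $e_2>0$, the first eigenfunction for $\omega_2$. Testing the equation for $e_1$ with $e_2$ and the equation for $e_2$ with $e_1$, and using the symmetry of $\mathcal{E}_L$, gives
\[
\lambda\into(\omega_2-\omega_1)e_1e_2~dx=0.
\]
If $\lambda\neq 0$, this contradicts $e_1e_2>0$ on the set where $\omega_2>\omega_1$. If $\lambda=0$, then $\mathcal{E}_L(e_1,e_1)=0$ and $e_1/\sqrt{t}\in M_{\omega_2}$ has energy $0=\lambda_1(\omega_2)$, so it is a minimizer and hence a first eigenfunction for $\omega_2$. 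By simplicity (Proposition \ref{simplicity}), $e_1$ is then a multiple of $e_2$. The two equations give $L_\Delta e_1 = 0 = \lambda\,\omega_i e_1$, which is consistent, so the argument breaks down here: $\lambda_1=0$ would be a genuine degenerate case for this approach. I would try to exclude it via the Lagrange-multiplier structure, or else note that if $\lambda_1(\omega_1)=0$ it is independent of the weight. In that event the strict inequality, if claimed, would need an extra hypothesis, and I would check the intended normalization, namely that $\lambda_1>0$ is implicitly assumed.
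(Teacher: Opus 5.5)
\textbf{Case $\lambda_1(\omega_1)>0$.} Here your argument is exactly the paper's: normalize the positive first eigenfunction $e_1$ of $\omega_1$ in $M_{\omega_2}$ and use $t=\into \omega_2 e_1^2\,dx>1$. The paper writes $\mathcal{E}_L(e_1,e_1)/t<\mathcal{E}_L(e_1,e_1)$ without comment, so it silently assumes $\lambda_1(\omega_1)>0$. You were right to flag this.

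\textbf{Case $\lambda_1(\omega_1)<0$.} This is where the genuine gap lies. You take the direction $\lambda_1(\omega_2)\le\lambda_1(\omega_1)$ from Proposition \ref{mono-wei}. But that proposition rests on the step
\[
\frac{\mathcal{E}_L(u,u)}{\into \omega_2 u^2\,dx}\le \mathcal{E}_L(u,u),
\]
which needs $\mathcal{E}_L(u,u)\ge 0$. That is the very obstruction you identified one step earlier. Your cross-testing identity $\lambda\into(\omega_2-\omega_1)e_1e_2\,dx=0$ is correct. However, it only excludes $\lambda_1(\omega_1)=\lambda_1(\omega_2)$ when the common value is nonzero. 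It says nothing about which eigenvalue is larger.

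\textbf{The statement is false when $\lambda_1(\omega_1)\le 0$.} So no argument can close these cases. Take $\Omega=B_R$ with $R$ so large that the first Dirichlet eigenvalue $\mu_1(B_R)$ of $L_\Delta$ is negative. This is possible by the scaling $\mu_1(B_R)=\mu_1(B_1)-2\ln R$, which follows from the symbol $2\ln|\xi|$; the introduction recalls that $\lambda_1$ becomes non-positive on large domains. Let $\omega_1\equiv 1$ and $\omega_2\equiv 2$. These weights satisfy \eqref{assump w0}--\eqref{assump w2} with $\lambda_0$ sufficiently negative. For a constant weight $c>0$, the Rayleigh characterization gives $\lambda_1=\mu_1/c$. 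Hence $\lambda_1(\omega_1)=\mu_1<\mu_1/2=\lambda_1(\omega_2)$, which is the reverse inequality. The same example also refutes Proposition \ref{mono-wei} for negative eigenvalues.

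Now choose $R$ with $\mu_1(B_R)=0$. Then both first eigenvalues vanish, confirming your suspicion about the degenerate case. More generally, if $\lambda_1(\omega_1)=0$, then $e_1>0$ lies in the kernel of $L_\Delta$. This forces $\mu_1=0$, and therefore $\lambda_1(\omega_2)=0$ as well.

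\textbf{Conclusion.} The proposition, and the paper's proof of it, are valid only under the additional hypothesis $\lambda_1(\omega_1)>0$. Under that hypothesis your first case is already a complete proof.
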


\begin{proof}
Let $e_{1,\omega_1}$ be the eigenfunction corresponding to $\lambda_1(\omega_1).$ By Proposition \ref{constant sign e-1} and \cite[Theorem 1.1]{Jarohs-Weth}, $e_{1,\omega_1}>0$. Since $\omega_2>\omega_1$ on a subset of $\Omega$ with positive measure and $e_{1,\omega_1} > 0$ in $\Omega$, we have
\[
\into \omega_2(x) e_{1,\omega_1}^2 ~dx > \into \omega_1(x) e_{1,\omega_1}^2 ~dx = 1.
\]
Setting
\[
v = \frac{e_{1,\omega_1}}{\left(\into \omega_2(x)e_{1,\omega_1}^2 ~dx\right)^{1/2}}.
\]
Then, $\into \omega_2(x) v^2=1$. By \eqref{lambda}, we have
\[
\lambda_1(\omega_2) \leq \mathcal{E}_L(v,v)
= \frac{\mathcal{E}_L(e_{1,\omega_1},e_{1,\omega_1})}{\into \omega_2(x) e_{1,\omega_1}^2 ~dx}<\frac{\mathcal{E}_L(e_{1,\omega_1},e_{1,\omega_1})}{\into \omega_1(x) e_{1,\omega_1}^2 ~dx}=\mathcal{E}_L(e_{1,\omega_1},e_{1,\omega_1})
= \lambda_1(\omega_1),
\]
which concludes the proof.
\end{proof}

\begin{proposition}
Let $\omega_1,\omega_2$ be bounded weight functions such that $\omega_1 < \omega_2$ in $\Omega$. Then,
\[
\lambda_2(\omega_1) > \lambda_2(\omega_2).
\]
\end{proposition}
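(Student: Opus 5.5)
We use the mountain-pass type characterization of Proposition \ref{sec-alter} with weight $\omega_1$: $\lambda_2(\omega_1)=\tilde\lambda_2(\omega_1)$ is an eigenvalue, and by Proposition \ref{isolate-alter} it is the least eigenvalue above $\lambda_1(\omega_1)$. Let $u$ be an eigenfunction for $\lambda_2(\omega_1)$, normalized so that $\into\omega_1u^2\,dx=1$. By Theorem \ref{sign-changing}, $u^\pm\not\equiv0$.

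The first step is to build a test path for $\omega_2$ from $u^+$ and $u^-$, exactly as in the proof of Proposition \ref{sec-alter}. From \eqref{sign-claim} and the weak formulation with $v=u^\pm$, for every $\delta\in\mathbb{S}^1$ we get
\[
\mathcal{E}_L(\delta_1u^+-\delta_2u^-,\delta_1u^+-\delta_2u^-)\le\lambda_2(\omega_1)\Bigl(\delta_1^2\into\omega_1(u^+)^2\,dx+\delta_2^2\into\omega_1(u^-)^2\,dx\Bigr).
\]

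The second step handles the quantities $a_\pm:=\into\omega_1(u^\pm)^2\,dx$. We need $a_\pm>0$. For this, test the equation with $u^+$ and use \eqref{est-4}:
\[
\mathcal{E}_L(u^+,u^+)\le\lambda_2(\omega_1)\,a_+ .
\]
Positivity of $a_+$ is the delicate point, since $\mathcal{E}_L$ may be negative. The approach is the following. If $a_+\le0$ and $\lambda_2(\omega_1)>0$, then $\mathcal{E}_L(u^+,u^+)\le0$. Theorem \ref{Hardy-inequ} together with \eqref{assump w2} gives $\mathcal{E}_L(w,w)\ge\alpha\|w\|_2^2+\lambda_0\into\omega w^2\,dx$, which has to be played against this. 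I expect this sign issue (and the possibly negative sign of $\lambda_2$) to be the main obstacle. I would first try to reduce to $\lambda_2(\omega_1)>0$ by a shift. Failing that, I would work with the normalization $\into\omega_2(\cdot)^2\,dx$ directly and compare the ratios.

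Assuming $a_\pm>0$, set $b_\pm:=\into\omega_2(u^\pm)^2\,dx$. Since $\omega_1<\omega_2$ everywhere and $u^\pm\not\equiv0$, we have $b_\pm>a_\pm>0$. Define
\[
f(\delta)=\frac{\delta_1u^+-\delta_2u^-}{(\delta_1^2b_++\delta_2^2b_-)^{1/2}}.
\]
This map is odd and continuous into $M_{\omega_2}$, and
\[
\mathcal{E}_L(f(\delta),f(\delta))\le\lambda_2(\omega_1)\,\frac{\delta_1^2a_++\delta_2^2a_-}{\delta_1^2b_++\delta_2^2b_-}.
\]
If $\lambda_2(\omega_1)>0$, the ratio is at most $\kappa:=\max\{a_+/b_+,a_-/b_-\}<1$. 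By compactness of $\mathbb{S}^1$, this gives
\[
\lambda_2(\omega_2)=\tilde\lambda_2(\omega_2)\le\kappa\,\lambda_2(\omega_1)<\lambda_2(\omega_1).
\]

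If instead $\lambda_2(\omega_1)\le0$, the ratio bound goes the wrong way, and a different comparison is needed. Here I would argue as follows. Proposition \ref{mono-wei} already gives $\lambda_2(\omega_2)\le\lambda_2(\omega_1)$. Suppose equality holds. Then $f(\mathbb{S}^1)$ is an optimal path for $\tilde\lambda_2(\omega_2)$, so the maximum over $\delta$ would have to be attained with equality in all of the above inequalities. That would force the strict inequality $b_\pm>a_\pm$ to become an equality, a contradiction. Making this rigorous in the nonpositive case is the part I would treat most carefully.
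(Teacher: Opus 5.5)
\textbf{The gap.} Your plan for the case $\lambda_2(\omega_1)\le 0$ cannot be completed, because the statement is false in that regime.

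\emph{Counterexample.} Take $\omega_1\equiv 1$ and $\omega_2\equiv 2$ on a domain with $\lambda_2(L_\Delta,\Omega)<0$. A large ball works, since $\lambda_k(L_\Delta,r\Omega)=\lambda_k(L_\Delta,\Omega)-2\ln r$. Both weights satisfy \eqref{assump w0}--\eqref{assump w2} once $\lambda_0$ is taken negative enough. The map $u\mapsto u/\sqrt{2}$ is an odd homeomorphism of $M_{\omega_1}$ onto $M_{\omega_2}$ that halves $\mathcal{E}_L$. Hence $\lambda_k(\omega_2)=\lambda_k(\omega_1)/2$ for all $k$. This is strictly larger than $\lambda_2(\omega_1)$ when $\lambda_2(\omega_1)<0$, and equal to it when $\lambda_2(\omega_1)=0$.

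\emph{Consequences for your argument.} The same example shows that Proposition \ref{mono-wei}, on which your last step rests, fails for negative eigenvalues. Its proof uses $\mathcal{E}_L(u,u)/\into \omega_2 u^2\,dx\le\mathcal{E}_L(u,u)$, which needs $\mathcal{E}_L(u,u)\ge 0$. Your rigidity step also has nothing to bite on. For $\lambda_2(\omega_1)<0$, your path bound $\lambda_2(\omega_1)\,r(\delta)$ with $r(\delta)\in(0,1)$ lies strictly above $\lambda_2(\omega_1)$, so assuming equality $\lambda_2(\omega_2)=\lambda_2(\omega_1)$ forces nothing about $a_\pm,b_\pm$. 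No shift or reduction can repair this.

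\emph{The paper has the same restriction.} Its proof also tacitly assumes positivity. Dividing by $\into \omega_2 (f_n(\delta_n))^2\,dx\ge m>1$ only decreases nonnegative quantities. The final step $\lambda_2(\omega_1)/m<\lambda_2(\omega_1)$ requires $\lambda_2(\omega_1)>0$. So the statement needs the extra hypothesis $\lambda_2(\omega_1)>0$.

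\textbf{Under that hypothesis your proof is complete.} The point you flagged as delicate is settled by the inequality you already wrote. From \eqref{est-4}, $\mathcal{E}_L(u^+,u^+)\le\lambda_2(\omega_1)a_+$. Theorem \ref{Hardy-inequ} and \eqref{assump w2} for $\omega_1$ give $\mathcal{E}_L(u^+,u^+)\ge\alpha\|u^+\|_{L^2(\Omega)}^2+\lambda_0 a_+$. Combining, $(\lambda_2(\omega_1)-\lambda_0)\,a_+\ge\alpha\|u^+\|_{L^2(\Omega)}^2>0$. Moreover $\lambda_2(\omega_1)>\lambda_1(\omega_1)\ge\lambda_0$ by Theorem \ref{exist-eigenval-L} and Proposition \ref{isolate-alter}, so $a_+>0$. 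The same works for $a_-$, using $\mathcal{E}_L(u^-,u^-)=\lambda_2(\omega_1)a_-+\mathcal{E}_L(u^+,u^-)\le\lambda_2(\omega_1)a_-$. Thus $0<a_\pm<b_\pm$ with no sign assumption on $\lambda_2$. The bound $\lambda_2(\omega_2)\le\max_{\delta}\mathcal{E}_L(f(\delta),f(\delta))\le\kappa\lambda_2(\omega_1)$ then needs only the easy inequality $\lambda_2\le\tilde\lambda_2$, since $f(\mathbb{S}^1)$ has genus at least $2$.

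\textbf{Comparison with the paper's route.} Your argument is genuinely different. The paper takes near-optimal odd paths $f_n$ for $\tilde\lambda_2(\omega_1)$ and rescales them uniformly through the constant $m_{\omega_1,\omega_2,\lambda}>1$. Its strictness comes from a minimizing-sequence and compactness argument. You instead use a single sign-changing eigenfunction and the explicit path from the proof of Proposition \ref{sec-alter}. This avoids the auxiliary minimization and gives an explicit contraction factor $\kappa=\max\{a_+/b_+,a_-/b_-\}<1$. It relies on the same ingredients the paper uses through Proposition \ref{sec-alter}: Theorem \ref{sign-changing}, $\lambda_2>\lambda_1$, and $\omega_1\in L^\infty(\Omega)$.
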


\begin{proof}
For any $\lambda > \lambda_1(\omega_1)$, we claim that
\begin{equation}
\label{cgt1}
m_{\omega_1,\omega_2,\lambda} > 1,
\end{equation} where
\[
m_{\omega_1,\omega_2,\lambda} = \inf \left\{ \int_\Omega \omega_2(x) u^2 ~dx : \into \omega_1(x) u^2 ~dx=1,\ \mathcal{E}_L(u,u) \leq \lambda \right\}.
\]
Note that $m_{\omega_1,\omega_2,\lambda}$ is well defined due to \eqref{lambda}.
Since $\omega_1 < \omega_2$ in $\Omega$, we have $m_{\omega_1,\omega_2,\lambda} \geq 1$. Let $(u_n)_{n \in \N}\subseteq \{u \in \h(\Omega):\into \omega_1(x) u^2 ~dx=1\}$ be a minimizing sequence for $m_{\omega_1,\omega_2,\lambda}$. Then, by using \eqref{assump w1} and \eqref{E-L-bd-below-above}, we get $(u_n)_{n \in \N}$ is bounded in $\h(\Omega)$. Thus,
\[
u_n \rightharpoonup u \quad \text{in } \h(\Omega), \text{ upto a subsequence}.
\]
Moreover, by using the weak lower semicontinuity of $\h(\Omega)$ norm, Lemma \ref{limit passage} (i), \eqref{est:lower:conv} and Theorem \ref{cpct-embed}, we have
\[
\mathcal{E}_L(u,u) \leq \liminf_{n \to \infty} \mathcal{E}_L(u_n,u_n) \leq \lambda,
\]
\[
\into \omega_1(x) u^2 ~dx = \lim_{n \to \infty} \into \omega_1(x) u_n^2 ~dx = 1,\]
and
\[\into \omega_2(x) u^2 ~dx = \lim_{n \to \infty} \into \omega_2(x) u_n^2 ~dx  = m_{\omega_1,\omega_2,\lambda}.
\]
Using this and $\omega_1 < \omega_2$ in $\Omega$, we get
\[
m_{\omega_1,\omega_2,\lambda}
= \into \omega_2(x) u^2 ~dx
> \into \omega_1(x) u^2 ~dx = 1,
\]
which proves \eqref{cgt1}.
Now, by Proposition \ref{mono-wei}, we have $\lambda_2(\omega_2) \leq \lambda_2(\omega_1)$. By Proposition \ref{sec-alter} there exists $(f_n)_{n \in \N} \subset \mathcal{C'},$ the set of all odd continuous maps from $\mathbb{S}^1$ to $\{u \in \h(\Omega): \into \omega_1(x) u^2 ~dx=1\}$ such that
\[
\max_{\delta\in \mathbb{S}^1} \mathcal{E}_L(f_n(\delta),f_n(\delta)) \leq \lambda_2(\omega_1) + \frac{1}{n}.
\]
Since $\omega_1 < \omega_2$ in $\Omega$, we have
\[
\int_\Omega \omega_2(x)(f_n(\delta))^2 ~dx > \int_\Omega \omega_1(x)(f_n(\delta))^2~dx = 1 \quad \text{for every } \delta \in \mathbb{S}^1.
\]
For every $n \in \N$, $\delta \in \mathbb{S}^1$, let
\[
\tilde f_n(\delta) := \frac{f_n(\delta)}{\left(\int_\Omega \omega_2(x)(f_n(\delta))^2~dx\right)^{1/2}}.
\]
Then, $\into \omega_2(x) (f_n(\delta))^2 ~dx=1.$
It is easy to see that $\tilde f_n \in \mathcal{C''},$ the set of all odd continuous maps from $\mathbb{S}^1$ to $\{u \in \h(\Omega): \into \omega_2(x) u^2 ~dx=1\}.$ Since $\mathbb{S}^1$ is compact, for each $n \in \N$, we can find $\delta_n \in \mathbb{S}^1$ such that
\[
\mathcal{E}_L(\tilde f_n(\delta_n), \tilde f_n(\delta_n)) = \max_{\delta\in \mathbb{S}^1} \mathcal{E}_L(\tilde f_n(\delta), \tilde f_n(\delta)).
\]
Using $\lambda_2(\omega_1)+1 > \lambda_1(\omega_1)$, by Proposition \ref{sec-alter} and the above estimate, we have
\[
\lambda_2(\omega_2)
\leq \mathcal{E}_L(\tilde f_n(\delta_n), \tilde f_n(\delta_n))
= \frac{\mathcal{E}_L( f_n(\delta_n), f_n(\delta_n))}{\int_\Omega \omega_2(x) (f_n(\delta_n))^2 ~dx}
\leq \frac{\lambda_2(\omega_1) + \frac{1}{n}}{m_{\omega_1,\omega_2,\lambda_2(\omega_1)+1}}.
\]
Passing to the limit as $n\to\infty$ and using \eqref{cgt1}, we get
\[
\lambda_2(\omega_2)
\leq \frac{\lambda_2(\omega_1)}{m_{\omega_1,\omega_2,\lambda_2(\omega_1)+1}} < \lambda_2(\omega_1),
\]
we get the claimed result.
\end{proof}

\begin{proposition}
Let $\omega\in L^\infty(\Omega)$ and  $\Omega_1, \ \Omega_2$ be bounded domains in $\R^N$ such that $\Omega_1$ is a proper subset of $\Omega_2$. Then,
\[
\lambda_1(\Omega_2) < \lambda_1(\Omega_1).
\]
\end{proposition}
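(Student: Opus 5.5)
We will compare the two problems through the natural extension by zero. Every $u\in\h(\Omega_1)$ is also an element of $\h(\Omega_2)$ (zero outside $\Omega_1\subset\Omega_2$). The weight is understood on $\Omega_2$ and restricted to $\Omega_1$. Moreover $\mathcal{E}_L(u,u)$ and $\into\omega u^2\,dx$ do not depend on whether $u$ is regarded in $\Omega_1$ or $\Omega_2$. Hence $M_{\Omega_1}\subseteq M_{\Omega_2}$, and by the Rayleigh characterization of Remark \ref{Rayleigh-char} (Theorem \ref{existence lambda1}) we already get $\lambda_1(\Omega_2)\le\lambda_1(\Omega_1)$. The task is to exclude equality.

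Suppose $\lambda_1(\Omega_2)=\lambda_1(\Omega_1)$. Let $e_{1,\Omega_1}\in M_{\Omega_1}$ be the first eigenfunction on $\Omega_1$, normalized with $e_{1,\Omega_1}\ge0$ by Proposition \ref{constant sign e-1}. Viewed in $\h(\Omega_2)$, it lies in $M_{\Omega_2}$ and satisfies $\mathcal{E}_L(e_{1,\Omega_1},e_{1,\Omega_1})=\lambda_1(\Omega_1)=\lambda_1(\Omega_2)=\min_{M_{\Omega_2}}\mathcal{E}_L$. So it is a minimizer of the Rayleigh problem \eqref{P1} on $\Omega_2$. The Lagrange multiplier computation in the proof of Theorem \ref{existence lambda1} then shows that it is a weak eigenfunction of \eqref{equ} in $\Omega_2$ with eigenvalue $\lambda_1(\Omega_2)$. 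That is, it satisfies $\mathcal{E}_L(e_{1,\Omega_1},v)=\lambda_1(\Omega_2)\int_{\Omega_2}\omega e_{1,\Omega_1}v\,dx$ for all $v\in\h(\Omega_2)$.

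Since $\omega\in L^\infty$, the positivity result \cite[Theorem 1.1]{Jarohs-Weth} applies on $\Omega_2$ exactly as in the isolation proof. The first eigenfunction on $\Omega_2$ is strictly positive in $\Omega_2$. By simplicity (Proposition \ref{simplicity}), $e_{1,\Omega_1}$ is a nonzero multiple of $e_{1,\Omega_2}$, so $e_{1,\Omega_1}>0$ a.e. in $\Omega_2$. But $e_{1,\Omega_1}=0$ on $\Omega_2\setminus\Omega_1$. Since $\Omega_1\subsetneq\Omega_2$ are domains, this set has positive measure: it is a nonempty relatively open piece near $\partial\Omega_1\cap\Omega_2$, using connectedness of $\Omega_2$ and openness of $\Omega_1$. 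This contradiction gives strict inequality.

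The main obstacle is justifying the strong positivity step. One must check that the cited strong maximum principle applies to nonnegative weak eigenfunctions of $L_\Delta u=\lambda\omega u$ with bounded $\omega$, including when $\lambda_1$ is nonpositive. If \cite{Jarohs-Weth} requires a sign on the potential, I would rewrite the equation as $L_\Delta u+c\,u=(\lambda\omega+c)u\ge0$ with $c$ large. Then I would apply the strong maximum principle to the shifted operator.

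As a fallback avoiding positivity, I would test the $\Omega_2$ eigen-equation with $v\ge0$ supported in $\Omega_2\setminus\overline{\Omega_1}$. Since $u=e_{1,\Omega_1}$ vanishes there, $\mathcal{E}_L(u,v)=-c_N\iint u(x)v(y)\bigl(k(x-y)+j(x-y)\bigr)\,dx\,dy<0$, while the right-hand side is $0$. This is a contradiction. One only needs to choose such a $v$ with $\Omega_2\setminus\overline{\Omega_1}$ having nonempty interior, which handles the generic case.
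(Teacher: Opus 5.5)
Your main route is the paper's own proof. You extend the first eigenfunction of $\Omega_1$ by zero to get $\lambda_1(\Omega_2)\le\lambda_1(\Omega_1)$. You then note that equality would make it a nonnegative first eigenfunction on $\Omega_2$. Since it vanishes on $\Omega_2\setminus\Omega_1$, this contradicts the strict positivity from \cite[Theorem 1.1]{Jarohs-Weth}.

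\textbf{The positive-measure step is justified incorrectly.} Connectedness of $\Omega_2$ and openness of $\Omega_1$ only give $\partial\Omega_1\cap\Omega_2\neq\emptyset$. They do not give $|\Omega_2\setminus\Omega_1|>0$. A counterexample for $N\ge 2$ is $\Omega_2=B_1(0)$, $\Omega_1=B_1(0)\setminus\{0\}$. There $\h(\Omega_1)=\h(\Omega_2)$, so $\lambda_1(\Omega_1)=\lambda_1(\Omega_2)$ and the statement itself fails.

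What you need is the paper's standing assumption that domains have Lipschitz boundary. Near a point $x_0\in\partial\Omega_1\cap\Omega_2$, the set $\Omega_1$ then lies on one side of a Lipschitz graph. Hence $\Omega_2\setminus\overline{\Omega_1}$ contains a nonempty open set. The paper simply asserts the positive measure without comment.

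\textbf{Your fallback is then a complete proof.} With this geometric fact, it is not restricted to a ``generic case''. Test the $\Omega_2$-equation with a nonnegative bump $v\not\equiv 0$ supported in $\Omega_2\setminus\overline{\Omega_1}$. This gives $\mathcal{E}_L(u,v)=-c_N\iint u(x)v(y)|x-y|^{-N}\,dx\,dy<0$. The right-hand side vanishes because $uv\equiv 0$.

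This route is more elementary than the paper's: it bypasses both simplicity and the strong maximum principle. As a result it never uses $\omega\in L^\infty(\Omega)$. It needs only Proposition \ref{constant sign e-1} and the Lagrange multiplier step, both of which hold under \eqref{assump w0}--\eqref{assump w2}. So your fallback proves strict monotonicity in the domain for the whole class of weights considered in the paper, not just bounded ones.
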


\begin{proof}
Let $u \in \h(\Omega_1)$ be a positive eigenfunction associated to $\lambda_1(\Omega_1).$

Note that $v=\frac{u}{\sqrt{\int_{\Omega_2} \omega(x) \tilde{u}^2 ~dx}} \in \h(\Omega_2)$ and using \eqref{lambda}, we obtain
\[
\lambda_1(\Omega_2)\leq \mathcal{E}_L\l(v,v\r) =
\frac{\mathcal{E}_L(u, u)}
{\int_{\Omega_2} \omega(x) u^2  ~dx}
=
\frac{\mathcal{E}_L(u,u)}
{\int_{\Omega_1} \omega(x) u^2  ~dx}
= \lambda_1(\Omega_1).
\]
The equality holds only if $u$ is an eigenfunction associated to $\lambda_1(\Omega_2)$. This is not possible because
$
|\{ x \in \Omega_2 : \tilde{u}(x)=0 \}| > 0,
$
which contradicts the fact that the first eigenfunction is strictly positive in $\Omega$ (in view of Proposition \ref{constant sign e-1} and the strong maximum principle \cite[Theorem 1.1]{Jarohs-Weth}). Hence, the required claim. 

\end{proof}

\appendix
\section{}
\renewcommand{\thelemma}{\thesection.\arabic{lemma}}
\renewcommand{\thetheorem}{\thesection.\arabic{theorem}}
\label{Appendix}
In this section, we establish a Hardy-type inequality for the operator $L_\Delta$, as stated in Theorem \ref{Hardy-inequ}. The proof is based on differentiating the following Hardy inequality (see \cite[inequality (1.2)]{Dipierro}) with respect to the parameter $s$ at $s=0$. For any $u \in C_c^\infty(\R^N)$, we have
\begin{align}
\label{hardy-frac}
    \begin{split}
        \frac{2^{2s}\Gamma^2\l(\frac{N+2s}{4}\r)}{\Gamma^2\l(\frac{N-2s}{4}\r)} \int_{\R^N} \frac{u^2}{|x|^{2s}} ~dx\leq c_{N,s}\intr \frac{|u(x)-u(y)|^2}{|x-y|^{N+2s}} ~dx ~dy,
    \end{split}
    \end{align}
    where $c_{N,s}:=\frac{2^{2s}\pi^{\frac{-N}{2}}s \Gamma(\frac{N+2s}{2})}{\Gamma(1-s)}$.
\newline
\textbf{Proof of Theorem \ref{Hardy-inequ}:}
Let $u \in C_c^\infty(\R^N)\setminus\{0\}.$ Note that,
    \[
    \begin{split}
    \frac{d}{ds}\l(\frac{2^{2s}\Gamma^2\l(\frac{N+2s}{4}\r)}{\Gamma^2\l(\frac{N-2s}{4}\r)} \int_{\R^N} \frac{u^2}{|x|^{2s}} ~dx\r)&=\frac{2^{2s}\Gamma^2\l(\frac{N+2s}{4}\r)}{\Gamma^2\l(\frac{N-2s}{4}\r)}\bigg(2\ln2+\Psi(\frac{N+2s}{4})\\&\qquad\qquad\qquad\qquad+\Psi(\frac{N-2s}{4})\bigg) \int_{\R^N} \frac{u^2}{|x|^{2s}} ~dx\\&\qquad\qquad-\frac{2^{2s+1}\Gamma^2\l(\frac{N+2s}{4}\r)}{\Gamma^2\l(\frac{N-2s}{4}\r)}\int_{\R^N} \frac{u^2\ln|x|}{|x|^{2s}} ~dx.
    \end{split}
    \]
    Applying Taylor's expansion about $s=0$, we get
    \begin{align}
    \label{frac-exp}
    \begin{split}
    \frac{2^{2s}\Gamma^2\l(\frac{N+2s}{4}\r)}{\Gamma^2\l(\frac{N-2s}{4}\r)} \int_{\R^N} \frac{u^2}{|x|^{2s}} ~dx&=\int_{\R^N} u^2 ~dx+s\bigg[2\bigg(\ln2+\Psi(\frac{N}{4})\bigg) \int_{\R^N} u^2 ~dx\\&\qquad\qquad\qquad-2\int_{\R^N} u^2\ln|x| ~dx\bigg]+o(s).
    \end{split}
    \end{align}
Using {Lemma \cite[Lemma 3.7]{Santamaria-Saldana}, we get}
    \begin{equation}
    \label{E_L-exp}
    c_{N,s}\intr \frac{|u(x)-u(y)|^2}{|x-y|^{N+2s}} ~dx ~dy=\int_{\R^N} u^2 ~dx +s\mathcal{E}_L(u,u)+o(s).
    \end{equation}
Substituting  \eqref{frac-exp} and \eqref{E_L-exp} in \eqref{hardy-frac}, we have
     \[
     \begin{split}
        &\int_{\R^N} u^2 ~dx+s\bigg[2\bigg(\ln2+\Psi(\frac{N}{4})\bigg) \int_{\R^N} u^2 ~dx-2\int_{\R^N} u^2\ln|x| ~dx\bigg]+o(s) \\
        &\qquad\qquad\qquad\qquad\leq \int_{\R^N} u^2 ~dx +s\mathcal{E}_L(u,u)+o(s).
     \end{split}
     \]
     Thus,
     \[2\bigg(\ln2+\Psi(\frac{N}{4})\bigg) \int_{\R^N} u^2 ~dx-2\int_{\R^N} u^2\ln|x| ~dx \leq \mathcal{E}_L(u,u).\]Hence, the claim follows for all non zero compactly supported functions in $\R^N.$ The inequality for every $u \in \h(\Omega)$ follows by density arguments (\cite[Theorem 3.1]{Chen-Weth}) and the fact that $\Omega$ is bounded.

\qed

\section*{Acknowledgment}  
The first author acknowledges the financial support from the Anusandhan National Research Foundation (ANRF), India, under Grant No. ANRF/ARGM/2025/ 000272/MTR.
The third author is supported by the UGC Senior Research Fellowship (Reference No. 221610015405).

\end{document}